\crefname{hypothesis}{Hypothesis}{Hypotheses}
\Crefname{ALC@unique}{Line}{Lines}
\colorlet{texcscolor}{blue!50!black}
\colorlet{texemcolor}{red!70!black}
\colorlet{texpreamble}{red!70!black}
\colorlet{codebackground}{black!25!white!25}
\lstdefinestyle{siamlatex}{%
  style=tcblatex,
  texcsstyle=*\color{texcscolor},
  texcsstyle=[2]\color{texemcolor},
  keywordstyle=[2]\color{texemcolor},
  moretexcs={cref,Cref,maketitle,mathcal,text,headers,email,url},
}
\DeclareTotalTCBox{\code}{ v O{} }
{ 
  fontupper=\ttfamily\color{black},
  nobeforeafter,
  tcbox raise base,
  colback=codebackground,colframe=white,
  top=0pt,bottom=0pt,left=0mm,right=0mm,
  leftrule=0pt,rightrule=0pt,toprule=0mm,bottomrule=0mm,
  boxsep=0.5mm,
  #2}{#1}
\patchcmd\newpage{\vfil}{}{}{}
\title{Notes on a high order fully discrete scheme for the Korteweg-de vries equation with a time-stepping procedure of Runge-Kutta-composition type
}
\author{Vassilios A. Dougalis\thanks{Mathematics Department, University of Athens, 15784
Zographou, Greece and
Institute of Applied and Computational
Mathematics, FO.R.T.H., 71110 Heraklion, Greece
(\email{doug@math.uoa.gr}).}
\and Angel Dur\'an\thanks{Applied Mathematics Department,  University of
Valladolid, 47011 Valladolid, Spain and Institute of Mathematics of the University of Valladolid (IMUVA) Paseo de Belen S/N 47011 Valladolid, Spain (\email{angel@mac.uva.es})}}
\title{A high order fully discrete scheme for the Korteweg-de vries equation with a time-stepping procedure of Runge-Kutta-composition type
}
\author{Vassilios A. Dougalis\thanks{Mathematics Department, University of Athens, 15784
Zographou, Greece and
Institute of Applied and Computational
Mathematics, FO.R.T.H., 71110 Heraklion, Greece
(\email{doug@math.uoa.gr}).}
\and Angel Dur\'an\thanks{Applied Mathematics Department,  University of
Valladolid, 47011 Valladolid, Spain and Institute of Mathematics of the University of Valladolid (IMUVA) Paseo de Belen S/N 47011 Valladolid, Spain (\email{angel@mac.uva.es})}}
\begin{document}
\maketitle

\begin{tcbverbatimwrite}{tmp_\jobname_abstract.tex}
\begin{abstract}
We consider the periodic initial-value problem for the Korteweg-de Vries equation that we discretize in space by a spectral Fourier-Galerkin method and in time by an implicit, high order, Runge-Kutta scheme of composition type based on the implicit midpoint rule. We prove $L^{2}$ error estimates for the resulting semidiscrete and the fully discrete approximations.
\end{abstract}
\begin{keywords}
Korteweg-de Vries equation, spectral method, Runge-Kutta-Composition methods, error estimates
\end{keywords}
\begin{AMS}
65M70, 65M12,65L06
\end{AMS}
\end{tcbverbatimwrite}
\begin{abstract}
We consider the periodic initial-value problem for the Korteweg-de Vries equation that we discretize in space by a spectral Fourier-Galerkin method and in time by an implicit, high order, Runge-Kutta scheme of composition type based on the implicit midpoint rule. We prove $L^{2}$ error estimates for the resulting semidiscrete and the fully discrete approximations.
\end{abstract}
\begin{keywords}
Korteweg-de Vries equation, spectral method, Runge-Kutta-Composition methods, error estimates
\end{keywords}
\begin{AMS}
65M70, 65M12,65L06
\end{AMS}

\section{Introduction}
In this paper we consider the periodic initial-value problem (ivp) for the Korteweg-de Vries (KdV) equation
%
%
%
%
%
\begin{eqnarray}
&&u_{t}+uu_{x}+u_{xxx}=0,\quad x\in [-\pi,\pi], \; 0\leq t\leq T,\label{dd12}\\
&&u(x,0)=u_{0}(x),\quad x\in [-\pi,\pi],\nonumber
\end{eqnarray}
where $u_{0}$ is a smooth, $2\pi-$periodic, real-valued function. The KdV is one of the simplest nonlinear partial differential equations (pde) modelling one-way propagation in one space dimension of long waves in which the nonlinear term (here given by $uu_{x}$) and the linear dispersive term (modelled by $u_{xxx}$) are suitably balanced. It has been studied extensively and has a rich mathematical theory. In the case of (\ref{dd12}) it is well known for example, cf. e.~g. \cite{Te,BS,BonaDKM1995}, that if $u_{0}\in H^{\mu}$ for $\mu\geq 2$, where $H^{\mu}$ is the $L^{2}$-based Sobolev space of order $\mu$ of periodic functions on $[-\pi,\pi]$, then for any $T>0$, (\ref{dd12}) has  a unique solution in $C(0,T;H^{\mu})$, that also belongs to $C^{k}(0,T;H^{\mu-3k})$ for $k\leq \left[\frac{\mu+2}{3}\right]$. (Here $C(0,T;X)$ is the space of continuous maps $u:[0,T]\rightarrow X$, where $X$ is a Banach space, and $C^{k}(0,T;X)$ is the space of $X$-valued functions defined on $[0,T]$ that are $k$ times continuously differentiable.)

We will analyze a high-order fully discrete, conservative numerical method for (\ref{dd12}). The scheme consists of a spectral Fourier-Galerkin discretization of the pde in the spatial variable, coupled with a high-order, diagonally implicit Runge-Kutta (RK) time-stepping scheme of {\it composition type} based on the Implicit Midpoint Rule. Although the analysis is done in the case of the model problem (\ref{dd12}), the main ideas and techniques behind the derivation of the error estimates may be used to establish analogous results for more complicated, $L^{2}$-conservative periodic ivp's for one-way nonlinear dispersive wave pde's with more general nonlinearities and linear dispersive terms and may also prove useful in analyzing temporal discretizations by more general composition-type RK schemes.

Among the many available $L^{2}$-conservative spatial discretizations for (\ref{dd12}) (cf. e.~g. the references of \cite{BakerDK1983} and \cite{BCKX}), we chose, for reasons of simplicity, the spectral Fourier-Galerkin method. This semidiscretization conserves the first three invariants of the KdV and is straightforward to analyze; for rigorous error estimates for the semidiscrete problem cf. e.~g. \cite{MQ,DM,Ka} and their references. In the first two of these papers one may find proofs of $L^{2}$ error bounds of spectral accuracy, whose rates of convergence depend on the smoothness of the initial value $u_{0}$. Specifically, if $N$ is the order of the trigonometric polynomials used in the Fourier basis, it is shown in \cite{MQ} by an energy method that if $u_{0}\in H^{\mu}, \mu\geq 2$, then the $L^{2}$ error of the semidiscrete problem is of $O(N^{1-\mu})$. In \cite{DM} the estimate is improved to $O(N^{-\mu})$ if $\mu\geq 3$, in fact for the generalized KdV equation. In order to obtain this optimal-order result the authors of \cite{DM} compare the semidiscrete approximation to the third-order projection of \cite{Wa}, and the proof is accordingly more complicated. In \cite{Ka} a result of different kind is proved: Specifically, if $u_{0}$ is analytic in a strip about the real axis, then the $L^{2}$ error bound is of $O(e^{-\sigma N})$, where $\sigma$ is a constant depending on $T$; the proof relies on analyticity results in \cite{BG}.

In this paper, since we will be primarily concerned with establishing error estimates for our fully discrete scheme, we give in Section \ref{sec3} a simplified proof of the error of the semidiscretization with an $L^{2}$ error bound of $O(N^{1-\mu})$, provided $\mu\geq 2$; the method of proof differs from that of \cite{MQ}. An important property of the semidiscrete spectral approximation is that its temporal derivatives are bounded, uniformly with respect to $T$ and $N$, in the Sobolev space norms, provided $u_{0}$ is sufficiently smooth; cf. Proposition \ref{propo32}. This property simplifies considerably estimating the errors of the full discretization.

An efficient time-stepping procedure for a conservative spatially discrete method for (\ref{dd12}), such as the one considered here, shoulñd be chosen so that the resulting fully discrete scheme has the following properties:
\begin{itemize}
\item It is $L^{2}$-conservative, preferably symplectic: These properties will give the scheme the chance to simulate accurately properties of the solution of the KdV that depend on the balance of dispersive and nonlinear terms, such as the propagation of solitary waves with constant speed and shape and their asymptotic stability properties, for example, the resolution of general initial profiles into sequences of solitary waves plus dispersive tails, their interactions, etc. A dissipative scheme will not reproduce accurately such properties as time increases.
\item It is convergent, at most under a weak mesh condition.
\item It is of high temporal accuracy, in order to take advantage of the high accuracy in space.
\item It may be easily implemented.
\end{itemize}

The class of implicit Runge-Kutta methods includes schemes that fulfill the above requirements. An example is the family of Gauss-Legendre collocation schemes. It is well known, cf. e.~g. \cite{HairerLW2004} and its references, that the $q$-stage Gauss-Legendre scheme has order of accuracy equal to $2q$, is B-stable and symplectic. These schemes have been used for the temporal discretization of many nonlinear dispersive wave pde's that give rise to stiff semidiscrete systems. Their convergence was analyzed in \cite{BonaDKM1995} in the case of the periodic ivp for the generalized KdV equation, discretized in space by the Galerkin finite element method with smooth periodic splines.

In the paper at hand for the temporal discretization we wil use implicit, symplectic RK schemes of composition type, whose general step is constructed as the composition of $s$ steps, of length $b_{i}k, 1 \leq i\leq s$, (where $k$ is the basic time step), of the Implicit Midpoint Rule, cf. e.~g. \cite{Yoshida1990,FrutosS1992,S-SA,HairerLW2004,SanzSC1994}. For general RK-composition methods we refer the reader to \cite{HairerLW2004} and its references. The particular scheme corresponding to $s=3$, of fourth-order temporal accuracy, was used by de Frutos and Sanz-Serna, \cite{FrutosS1992}, to integrate the ivp (\ref{dd12}) for the KdV, discretized in space by finite element and spectral methods. It was also used in \cite{DDM2019} (see also the arxiv version of the paper) for long time computations in a study of the evolution and stability of solitary waves of the generalized Benjamin equation (see Section \ref{sec6} in the sequel), discretized in space by a spectral method. It should be pointed out that the schemes in this class are not A-stable, since some of the $b_{i}$ are not positive and the attendant rational approximations to $e^{z}$ have poles in the left half of the complex plane. However, for a conservative problem like (\ref{dd12}) the scheme, being symplectic, is unconditionally $L^{2}$-conservative and convergent under a Courant number stability restriction, as will be proved in Theorem \ref{Theo51} in this paper. Let us also remark that symplectic schemes have other well-known properties related to their long-time fidelity to solutions of a problem like (1.1). For example, since the spectral semidiscretization of (1.1), when implemented in the Fourier collocation form, leads to a Hamiltonian system of ordinary differential equations (ode’s) for the semidiscrete solution at the collocation points (the proof for the KdV case follows along similar lines to those in  \cite{Cano} for the nonlinear wave equation and the nonlinear Schr\"{o}dinger equation), the property of symplecticity, \cite{SanzSC1994,HairerLW2004}, ensures the virtual preservation of the Hamiltonian, in the sense that the error in this quantity decreases exponentially for long times when a symplectic method is used.

In section \ref{sec41} we review the error estimate for the fully discrete IMR-spectral scheme, while in \ref{sec42} we present the RK-composition scheme under study in the context of ode's. In section \ref{sec43} we consider the general $s$-stage fully discrete scheme and establish the existence of its solutions, it $L^{2}$- conservation property, and state, under general hypotheses, a result on the uniqueness of solutions. In section \ref{sec44} we study the local temporal error of the time-stepping scheme with $s=3$ stages (of fourth order of accuracy), applied to the semidiscrete system. Assuming that the solution of (\ref{dd12}) is sufficiently smooth and that $k=O(N^{-1})$ we prove in Proposition \ref{propo43} that the local temporal error is of $O(k^{5})$ in $L^{2}$. The result is achieved by computing the asymptotic expansions in powers of $k$, up to $O(k^{5})$ terms, of the intermediate steps of the local error about the points $\tau^{n,i}=t^{n}+(b_{1}+\cdots+b_{i})k$ in terms of the semidiscrete solution and its partial derivatives. We compute the coefficients of these asymptotic expansions, estimate their residuals, and substitute them in the final stage of the local error equation, whereupon, after cancellation, there emerges the $O(k^{5})$ local error. Thus, the overall plan of the proof resembles that adopted in the case of other implicit, high-order RK schemes for the KdV and its generalized version in \cite{DK,BonaDKM1995,K-McK}, for the nonlinear Schr\"{o}dinger equation in \cite{KAD}, and for the explicit, $(4,4)$ \lq classical\rq\ Runge-Kutta scheme for the system of Shallow Water equations in \cite{ADK}. With the exception of \cite{K-McK}, where only the temporal discretization of the pde was considered, in the other papers cited above the spatial discretization was effected by Galerkin finite element methods and the stages of the local temporal error were computed in terms of continuous in time finite element approximations of the solution of the pde, such as the quasiinterpolant, the elliptic projection, and the $L^{2}$ projection. Here, the use of the semidiscrete spectral approximation itself for this purpose simplifies the analysis; however many technical difficulties remain and they are resolved in the course of the proof of Proposition \ref{propo43}.

In section \ref{sec5} we revert to the general $s$-stage temporal discretization scheme and, under the hypotheses that the solution of (\ref{dd12}) is in $H^{\mu}$ for $t\in [0,T]$ for $\mu$ sufficiently large, and that the local temporal error is of $O(k^{\alpha+1})$ in $L^{2}$, we prove that there exists a constant $C$ such that if $kN\leq C$, the fully discrete scheme has a unique solution whose maximum $L^{2}$ error over $[0,T]$ has a bound of $O(k^{\alpha}+N^{1-\mu})$. Therefore, the RK scheme with $s=3$ stages, whose local temporal error was analyzed in section \ref{sec44}, leads to a fully discrete method with an $L^{2}$ error bound of $O(k^{4}+N^{1-\mu})$. In a remark at the end of section \ref{sec5} we discuss the convergence of a simple iterative scheme approximating the nonlinear system of equations that must be solved at each IMR stage of the RK time-stepping scheme.

In a final section \ref{sec6} we summarize the results of the paper and indicate how they may be extended to the case of the generalized Benjamin equation, solved numerically with the present scheme in \cite{DDM2019}.

As was already mentioned we will denote by $H^{\mu}$, for real $\mu\geq 0$, the $L^{2}$-based Sobolev space of order $\mu$ consisting of periodic functions on $(-\pi,\pi)$. For $g\in H^{\mu}$ its norm is given by
\begin{eqnarray*}
||g||_{{\mu}}=\left(\sum_{k\in\mathbb{Z}}(1+k^{2})^{\mu}|\widehat{g}(k)|^{2}\right)^{1/2},
\end{eqnarray*}
 where $\widehat{g}(k)=\frac{1}{2\pi}\int_{-\pi}^{\pi}e^{-ikx}g(x)dx$ is the $k$th Fourier coefficient of $g$. For $1\leq p\leq\infty$ we denote by $W_{p}^{\mu}=W_{p}^{\mu}(-\pi,\pi)$ the real Sobolev space of periodic functions on $(-\pi,\pi)$ and denote its norm by $||\cdot||_{\mu,p}$, while $|\cdot|_{\infty}$ will stand for the norm of $L^{\infty}(-\pi,\pi)$. Finally, the inner product in $L^{2}=L^{2}(-\pi,\pi)$ will be defined by
$
(u,v)=\int_{-\pi}^{\pi}u(x)\overline{v(x)}dx,
$
and $||\cdot||$ will denote the induced $L^{2}$ norm.


\section{Semidiscretization and preliminaries}
\label{sec2}
Let $N\geq 1$ be an integer and consider the finite-dimensional space $S_{N}$ defined by
\begin{eqnarray*}
S_{N}=span\{e^{ikx}, \, k \,{\rm integer},\,-N\leq k\leq N\}.
\end{eqnarray*}
Let  $P_{N}$ denote the $L^{2}-$projection operator onto $S_{N}$ defined for $v\in L^{2}$ by
$$P_{N}v=\sum_{|k|\leq N}\widehat{v}_{k}e^{ikx},$$ where $\widehat{v}_{k}=\widehat{v}(k)$ is the $k-$th Fourier coefficient of $v$. We note some well-known properties of $P_{N}$ that will be used throughout the paper. It is obvious that $P_{N}$ commutes with the differentiation operator $\partial_{x}$. Moreover, cf. \cite{Me}, given integers $0\leq j\leq\mu$, there exists a constant $C$ independent of $N$ such that for any $v\in H^{\mu}, \mu\geq 1$,
\begin{eqnarray}
||v-P_{N}v||_{j}&\leq &CN^{j-\mu}||v||_{\mu},\label{dd21}\\ 
|v-P_{N}v|_{\infty}& \leq & CN^{1/2-\mu}||v||_{\mu}.\label{dd22}
\end{eqnarray}
In addition, the following inverse inequalities hold on $S_{N}$. Given $0\leq j\leq \mu$, there exists a constant $C_{0}$ independent of $N$, such that for all $\psi\in S_{N}$
\begin{eqnarray}
||\psi||_{\mu}\leq C_{0}N^{\mu-j}||\psi||_{j},\quad 
||\psi||_{\mu,\infty}\leq C_{0}N^{1/2+\mu-j}||\psi||_{j}.\label{dd23}
\end{eqnarray}

The semidiscrete Fourier-Galerkin approximation to the solution of  (\ref{dd12}) is a real-valued map $u^{N}:[0,T]\rightarrow S_{N}$ such that, for all $\chi\in S_{N}$,
\begin{eqnarray}
&&(u_{t}^{N}+u^{N} u_{x}^{N}+u^{N}_{xxx},\chi)=0,\quad 0\leq t\leq T,\label{dd24}\\
&&u^{N}(x,0)=P_{N}u_{0}(x).\nonumber
\end{eqnarray}
It is straightforward to see that while the solution of (\ref{dd24}) exists, it satisfies
\begin{eqnarray*}
\frac{d}{dt}\int_{-\pi}^{\pi} u^{N} dx&=&0,\\
\frac{d}{dt}\int_{-\pi}^{\pi} (u^{N})^{2} dx&=&0,\\
\frac{d}{dt}\int_{-\pi}^{\pi} \left((u_{x}^{N})^{2}-\frac{1}{3} (u^{N})^{3}\right)dx&=&0.
\end{eqnarray*}
In particular, while $u^{N}$ exists, we have
\begin{eqnarray}
||u^{N}(t)||=||u^{N}(0)||,\label{dd25}
\end{eqnarray}
from which, from standard ode theory, we see that $u^{N}(t)$ exists uniquely for all $t>0$ and, in particular, satisfies (\ref{dd25}) and the other conservation laws for $0\leq t\leq T$.
\section{Convergence of the semidiscretization}
\label{sec3}
\begin{theorem}
\label{th31}
Let $u^{N}$ be the solution of (\ref{dd24}) and suppose that $u$, the solution of (\ref{dd12}), belongs to $H^{\mu}, \mu\geq 2$ for $t\in [0,T]$. Then for some constant $C$ independent of $N$ it holds that
\begin{eqnarray}
\max_{0\leq t\leq T}||u^{N}-u||\leq \frac{C}{N^{\mu-1}}.\label{dd31}
\end{eqnarray}
\end{theorem}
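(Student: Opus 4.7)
My plan is to use the standard splitting $u - u^N = \rho + \theta$, with $\rho = u - P_N u$ and $\theta = P_N u - u^N \in S_N$. Since (\ref{dd21}) gives $\|\rho\| \leq C N^{-\mu}\|u\|_{\mu}$, it suffices to show $\|\theta\| \leq C N^{1-\mu}$. Setting $v = P_N u$ and applying $P_N$ to (\ref{dd12}), which commutes with $\partial_x$, gives
\[
(v_t + vv_x + v_{xxx},\chi) = (vv_x - uu_x,\chi), \qquad \chi \in S_N.
\]
Subtracting (\ref{dd24}) yields the error equation
\[
(\theta_t + (vv_x - u^N u^N_x) + \theta_{xxx},\chi) = (r,\chi), \qquad \chi \in S_N,
\]
with residual $r = vv_x - uu_x = -\tfrac{1}{2}(\rho(v+u))_x$. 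Combining (\ref{dd21}) (applied to both $\rho$ and $\rho_x$) with the Sobolev embedding $H^{\mu}\hookrightarrow W^{1,\infty}$ valid for $\mu\geq 2$, together with (\ref{dd22}) to bound $|v|_\infty$ and $|v_x|_\infty$ uniformly in $N$, one obtains $\|r\| \leq C N^{1-\mu}$.

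Next I would take $\chi = \theta$ as test function. The dispersive term vanishes, $(\theta_{xxx},\theta)=0$, and rewriting $vv_x - u^N u^N_x = \tfrac{1}{2}((v+u^N)\theta)_x$ and integrating by parts twice produces
\[
\tfrac{1}{2}\tfrac{d}{dt}\|\theta\|^2 + \tfrac{1}{4}((v+u^N)_x,\theta^2) = (r,\theta).
\]
The point I expect to be the main obstacle is the $(u^N_x,\theta^2)$ contribution, since a direct estimate would require $|u^N_x|_\infty$ bounded uniformly in $N$, whereas the Fourier-Galerkin conservation laws only control $\|u^N\|_1$. The simplification that bypasses this difficulty is the identity
\[
(u^N_x,\theta^2) = (v_x,\theta^2) - \int_{-\pi}^{\pi}\theta_x\,\theta^2\,dx = (v_x,\theta^2),
\]
obtained from $u^N = v - \theta$ together with $\theta_x\theta^2 = \tfrac{1}{3}(\theta^3)_x$ and periodicity. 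Hence $\tfrac{1}{4}((v+u^N)_x,\theta^2) = \tfrac{1}{2}(v_x,\theta^2)$, and this quantity is harmlessly controlled by $\tfrac{1}{2}|v_x|_\infty\|\theta\|^2 \leq C\|\theta\|^2$.

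Putting these together and applying Young's inequality to $2(r,\theta)$ gives $\tfrac{d}{dt}\|\theta\|^2 \leq C_1\|\theta\|^2 + C_2 N^{2-2\mu}$. Since $u^N(0)=P_N u_0$ implies $\theta(0)=0$, Gronwall's inequality yields $\|\theta(t)\| \leq C N^{1-\mu}$ on $[0,T]$, and combining with the estimate on $\rho$ delivers (\ref{dd31}). The key technical ingredient, distinguishing this proof from the more delicate energy method that would require a uniform $W^{1,\infty}$ bound on $u^N$, is the elementary identity $(u^N_x,\theta^2) = ((P_N u)_x,\theta^2)$, which replaces a quantity tied to $u^N$ with one controlled purely by the smoothness of $u$.
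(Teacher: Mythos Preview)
Your argument is correct and is essentially the same as the paper's, only organized differently. The paper writes $u^{N}-u=\theta+\rho$ with $\theta=u^{N}-P_{N}u$, expands $uu_{x}-u^{N}u_{x}^{N}$ directly into eight terms involving $u,\theta,\rho$, takes $\chi=\theta$, and estimates each surviving inner product; the cancellation $(\theta\theta_{x},\theta)=0$ is used implicitly when the expansion collapses to $\tfrac12(u_{x},\theta^{2})$, $\tfrac12(\rho_{x},\theta^{2})$, and the mixed $\rho$ terms. Your identity $(u^{N}_{x},\theta^{2})=((P_{N}u)_{x},\theta^{2})$ is exactly this same cancellation, rewritten after you first grouped the nonlinearity as a residual $r=vv_{x}-uu_{x}$ plus the coupling $\tfrac12((v+u^{N})\theta)_{x}$. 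Both routes arrive at $\tfrac{d}{dt}\|\theta\|^{2}\leq C\|\theta\|^{2}+CN^{2-2\mu}$ via the same mechanism and the same uses of (\ref{dd21})--(\ref{dd22}); your packaging is a bit cleaner in separating the truncation part from the stability part, while the paper's term-by-term expansion makes each contribution visible, but there is no substantive difference.
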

\begin{proof}
We write $u^{N}-u=\theta+\rho$, where $\theta=u^{N}-P_{N}u\in S_{N}$, and $\rho=P_{N}u-u$. We then have for $\chi\in S_{N}, 0\leq t\leq T$, in view of (\ref{dd24}), (\ref{dd12}),
\begin{eqnarray*}
(\theta_{t},\chi)+(\theta_{xxx},\chi)=(u_{t}^{N}+u_{xxx}^{N},\chi)-(P_{N}(u_{t}+u_{xxx}),\chi)
=(uu_{x}-u^{N}u_{x}^{N},\chi).
\end{eqnarray*}
Since 
\begin{eqnarray*}
uu_{x}-u^{N}u_{x}^{N}&=&uu_{x}-(u+\theta+\rho)(u_{x}+\theta_{x}+\rho_{x})\\
&=&-u\theta_{x}-u\rho_{x}-u_{x}\theta-\theta\theta_{x}-\rho_{x}\theta-u_{x}\rho-\rho\theta_{x}-\rho\rho_{x},
\end{eqnarray*}
we have for $\chi\in S_{N}$
\begin{eqnarray*}
(\theta_{t},\chi)+(\theta_{xxx},\chi)&=&-\left((u\theta_{x},\chi)+(u\rho_{x},\chi)+(u_{x}\theta,\chi)+(\theta\theta_{x},\chi)\right.\\
&&+\left.(\rho_{x}\theta,\chi)+(u_{x}\rho,\chi)+(\rho\theta_{x},\chi)+(\rho\rho_{x},\chi)\right).
\end{eqnarray*}
Putting $\chi=\theta$ in the above and using integration by parts and periodicity we obtain for $0\leq t\leq T$
\begin{eqnarray}
\frac{1}{2}\frac{d}{dt}||\theta||^{2}=-\left(\frac{1}{2}(u_{x},\theta^{2})+(u\rho_{x},\theta)+\frac{1}{2}(\rho_{x},\theta^{2})+(u_{x}\rho,\theta)+(\rho\rho_{x},\theta)\right).\label{dd32}
\end{eqnarray}
We estimate now the various inner products in the right-hand side of the above, taking into account that $u\in H^{\mu}, \mu\geq 2$. We first have
\begin{eqnarray}
|(u_{x},\theta^{2})|\leq |u_{x}|_{\infty}||\theta||^{2}\leq C||\theta||^{2}.\label{dd33a}
\end{eqnarray}
(Here and in the sequel $C$ will denote a generic constant independent of the discretization parameters.)
By (\ref{dd21})
\begin{eqnarray}
|(u\rho_{x},\theta)|\leq |u|_{\infty}||\rho_{x}||||\theta||\leq CN^{1-\mu}||\theta||.\label{dd33b}
\end{eqnarray}
Using the inequality $|v_{x}|_{\infty}\leq C||v_{x}||^{1/2}||v_{xx}||^{1/2}$, valid in $H^{2}$, we see, in view of (\ref{dd21}), since $\mu\geq 2$
\begin{eqnarray}
|(\rho_{x},\theta^{2})|\leq |\rho_{x}|_{\infty}||\theta||^{2}\leq CN^{\frac{3}{2}-\mu}||\theta||^{2}\leq C||\theta||^{2}.\label{dd33c}
\end{eqnarray}
Also, by (\ref{dd21})
\begin{eqnarray}
|(u_{x}\rho,\theta)|\leq |u_{x}|_{\infty}||\rho||||\theta||\leq CN^{-\mu}||\theta||.\label{dd33d}
\end{eqnarray}
And, as above
\begin{eqnarray}
|(\rho\rho_{x},\theta)|\leq |\rho|_{\infty}||\rho_{x}||||\theta||\leq CN^{\frac{3}{2}-2\mu}||\theta||\leq CN^{-\mu}||\theta||.\label{dd33e}
\end{eqnarray}
We conclude by (\ref{dd32})-(\ref{dd33e}) that
\begin{eqnarray*}
\frac{d}{dt}||\theta||^{2}\leq C\left(N^{2(1-\mu)}+||\theta||^{2}\right),\;\;0\leq t\leq T,
\end{eqnarray*}
from which, by Gronwall's lemma, since $\theta(0)=0$, we get
\begin{eqnarray*}
\max_{0\leq t\leq T}||\theta||\leq CN^{1-\mu},
\end{eqnarray*}
and (\ref{dd31}) follows, in view of (\ref{dd21}).
\end{proof}

For the purposes of estimating the error of the temporal discretization of (\ref{dd24}), we note the following boundedness result for the semidiscrete approximation $u^{N}$, which is a consequence of the error estimate (\ref{dd31}).
\begin{proposition}
\label{propo32}
Let $u^{N}$ be the solution of (\ref{dd24}) and suppose that the solution $u$ of (\ref{dd12}) belongs to $H^{\mu}$ for $t\in [0,T]$. Then, given nonnegative integers $j$ and $l$, and provided $\mu\geq \max \{2,3j+l+1\}$, there exists a constant $C$ independent of $N$ such that
\begin{eqnarray}
\max_{0\leq t\leq T}||\partial_{t}^{j}u^{N}||_{l}\leq C.\label{dd34}
\end{eqnarray}
\begin{proof}
Using (\ref{dd21}), (\ref{dd23}), and (\ref{dd31}), provided $\mu\geq 2$, we have
\begin{eqnarray*}
||u^{N}||_{l}&\leq &||u-P_{N}u||_{l}+||P_{N}u-u^{N}||_{l}+||u||_{l}\\
&\leq & CN^{l-\mu}||u||_{\mu}+CN^{l}\left(||P_{N}u-u||+||u-u^{N}||\right)+||u||_{l}\\
&\leq &CN^{l-\mu}||u||_{\mu}+CN^{l+1-\mu}||u||_{\mu}+||u||_{l}.
\end{eqnarray*}
Therefore, if $\mu\geq \max \{2,l+1\}$ it holds that
\begin{eqnarray}
\max_{0\leq t\leq T}||u^{N}||_{l}\leq C.\label{dd35}
\end{eqnarray}
Since
\begin{eqnarray}
\partial_{t}u^{N}=-u_{xxx}^{N}-P_{N}(u^{N}u_{x}^{N}),\label{dd36}
\end{eqnarray}
we have
\begin{eqnarray*}
||\partial_{t}u^{N}||_{l}\leq ||u^{N}||_{l+3}+||P_{N}(u^{N}u_{x}^{N})||_{l}\leq ||u^{N}||_{l+3}+||u^{N}u_{x}^{N}||_{l}.
\end{eqnarray*}
From Sobolev's theorem and the fact that $H^{l}$ is an algebra for $l\geq 1$ we conclude from the above that
\begin{eqnarray*}
||\partial_{t}u^{N}||_{l}\leq ||u^{N}||_{l+3}+C||u^{N}||_{l+1}^{2}.
\end{eqnarray*}
Therefore, in view of (\ref{dd35}) and if $\mu\geq l+4$ we have
\begin{eqnarray}
||\partial_{t}u^{N}||_{l}\leq C.\label{dd37}
\end{eqnarray}
Finally, differentiating (\ref{dd36}) $j-1$ times with respect to $t$ and using repeatedly (\ref{dd35}) and (\ref{dd37}), we obtain (\ref{dd34}).
\end{proof}
\end{proposition}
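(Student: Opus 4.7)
The plan is to bootstrap from the $L^{2}$-error estimate of Theorem~\ref{th31} to a spatial Sobolev bound on $u^{N}$ itself, and then to convert time derivatives into spatial ones by using the semidiscrete equation~(\ref{dd24}).

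For the base case $j=0$, I would split $u^{N}=(u^{N}-P_{N}u)+P_{N}u$. The second piece is controlled by $\|u\|_{l}$ from the $L^{2}$-stability of $P_{N}$, assuming $\mu\geq l$. The first piece lies in $S_{N}$, so the inverse inequality~(\ref{dd23}) trades $H^{l}$ norm for $N^{l}$ times $L^{2}$ norm. Combining the triangle inequality, Theorem~\ref{th31}, and the approximation property~(\ref{dd21}) yields $\|u^{N}-P_{N}u\|\leq C N^{1-\mu}$, so $\|u^{N}-P_{N}u\|_{l}\leq C N^{l+1-\mu}$, which is $O(1)$ as long as $\mu\geq l+1$. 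Together with the hypothesis $\mu\geq 2$ that Theorem~\ref{th31} requires, this proves the claim for $j=0$.

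For $j\geq 1$, I would rewrite~(\ref{dd24}) as $\partial_{t}u^{N}=-u^{N}_{xxx}-P_{N}(u^{N}u^{N}_{x})$, which makes sense since $\partial_{t}u^{N}\in S_{N}$. Taking the $H^{l}$ norm and using that $P_{N}$ commutes with $\partial_{x}$ and is stable in $L^{2}$ (hence in $H^{l}$ by Fourier truncation), I get $\|\partial_{t}u^{N}\|_{l}\leq \|u^{N}\|_{l+3}+\|u^{N}u^{N}_{x}\|_{l}$. The nonlinear term is handled by the algebra property of $H^{l}$ for $l\geq 1$ (with Sobolev embedding taking care of the case $l=0$), giving $\|u^{N}u^{N}_{x}\|_{l}\leq C\|u^{N}\|_{l+1}^{2}$. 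Applying the $j=0$ bound to the orders $l+3$ and $l+1$ demands $\mu\geq l+4$, matching $3\cdot 1+l+1$. For higher $j$, I would proceed by induction: differentiate the semidiscrete equation $j-1$ more times in $t$ and expand with the Leibniz rule, so that $\partial_{t}^{j}u^{N}$ is written as $-\partial_{x}^{3}\partial_{t}^{j-1}u^{N}$ plus $P_{N}$ of a sum of products $\partial_{t}^{j_{1}}u^{N}\,\partial_{x}\partial_{t}^{j_{2}}u^{N}$ with $j_{1}+j_{2}=j-1$; each factor is then estimated in $H^{l+1}$ by the inductive hypothesis.

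The main obstacle is the careful bookkeeping of the induction: one must verify that every factor appearing in $\partial_{t}^{j-1}(u^{N}u^{N}_{x})$ has the form $\partial_{t}^{j'}u^{N}$ with $j'<j$, and that its required Sobolev index $3j'+l'+1$ stays below $\mu$ when bounded in $H^{l+1}$, i.e. $3j'+l+2\leq 3j+l+1$, which holds because $j'\leq j-1$. Once this bookkeeping is set up, the $H^{l}$-algebra property and the $j=0$ result reduce the estimate mechanically to the hypothesis $\mu\geq\max\{2,3j+l+1\}$, which is exactly the threshold dictated by trading one time derivative for three spatial ones plus one extra derivative absorbed by the nonlinearity.
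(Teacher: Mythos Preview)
Your proposal is correct and follows essentially the same approach as the paper: split $u^{N}$ around $P_{N}u$ and use the inverse inequality together with the $L^{2}$ error bound (\ref{dd31}) for the base case, then convert time derivatives into spatial ones via the semidiscrete equation and the $H^{l}$-algebra property, proceeding inductively with the same bookkeeping $\mu\geq 3j+l+1$.
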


\section{Full discretization by a Runge-Kutta method of composition type}
\label{sec4}
As mentioned in the Introduction, we will discretize in time the initial-value problem for the system of ordinary differential equations (ode ivp) represented by (\ref{dd24}), using an implicit $s$-stage RK-composition scheme based on the Implicit Midpoint Rule (IMR). In this section, after reviewing briefly the IMR, we will present the time-stepping RK method to be analyzed, study the existence of solutions of the resulting fully discrete scheme, its $L^{2}-$conservation property, present a preliminary uniqueness of solutions result and, for $s=3$, prove an $L^{2}$ estimate of its local temporal error.

\subsection{Fully discrete scheme with Implicit Midpoint Rule time stepping}
\label{sec41}
A simple time-stepping method that may be used to discretize the ode ivp (\ref{dd24}) in $t$ is the {\it Implicit Midpoint Rule} (IMR), which, in the case of the autonomous ode system $\dot{y}=\phi(y)$, is the single-step scheme
$$y^{n+1}-y^{n}=k\phi(y^{n+1/2}),$$ where $k$ here and the sequel will denote the (uniform) time step, $y^{n}$ is the approximation of $y(t^{n}), t^{n}=nk$, and $y^{n+1/2}=\frac{1}{2}(y^{n+1}+y^{n})$. In the case of the ivp (\ref{dd24}), assuming that $T=Mk$ where $M$ is an integer, the scheme is the following: We seek $U^{n}\in S_{N}$ for $n=0,\ldots,M$, satisfying for each $\chi\in S_{N}$

\begin{eqnarray}
(U^{n+1}-U^{n},\chi)&=&k\left(-(U^{n+1/2})_{xxx}-f(U^{n+1/2})_{x},\chi\right),\label{41}\\
U^{0}&=&P_{N}u_{0},\nonumber
\end{eqnarray}
where here and in the sequel we put $f(v)=v^{2}/2$ and
$U^{n+1/2}=\frac{U^{n+1}+U^{n}}{2}$. For the Fourier coefficients $\widehat{U^{n}}(j), -N\leq j\leq N,$ of $U^{n}$ we may write

\begin{eqnarray*}
&&\frac{\widehat{U^{n+1}}(j)-\widehat{U^{n}}(j)}{k}=i\left(j^{3}\widehat{U^{n+1/2}}(j)-j\widehat{f(U^{n+1/2})}(j)\right),\nonumber\\
&&\widehat{U^{0}}(j)=\widehat{u_{0}}(j),\quad -N\leq j\leq N,\nonumber
\end{eqnarray*}
where $\widehat{f(U^{n+1/2})}(j)$ denotes the $j-$th Fourier coefficient of ${f(U^{n+1/2})}$.

It is easy to see, by writing for each $n$ the equations (\ref{41}) in fixed-point form and applying a variant of Brouwer's fixed point theorem, that, given $U^{n}\in S_{N}$, there exists a solution $U^{n+1}\in S_{N}$ of the nonlinear system of equations represented by (\ref{41}). Putting $\chi=U^{n}+U^{n+1}$ and using periodicity one may also obtain that the method is $L^{2}-$conservative, i.~e. that
\begin{eqnarray}
||U^{n}||=||U^{0}||,\quad 0\leq n\leq M.\label{42}
\end{eqnarray}
By comparing $U^{n}$ with $u^{N}(t^{n})$ where $u^{N}$ is the solution of (\ref{dd24}), using (\ref{dd31}) and (\ref{dd34}), one may derive in a straightforward way the following error estimate for $U^{n}$.
\begin{proposition}
\label{propo41}
Suppose that $u$, the solution of (\ref{dd12}), belongs to $H^{\mu}$ for $t\in [0,T]$, where $\mu\geq 10$. Then, there exists a constant $\alpha>0$, such that if $k\leq \frac{\alpha}{N}$, there exists a unique solution $\{U^{n}\}_{n=0}^{M}$ of (\ref{41}) satisfying
\begin{eqnarray}
\max_{0\leq n\leq M}||U^{n}-u(t^{n})||\leq C(k^{2}+N^{1-\mu}),\label{43}
\end{eqnarray}
where $C$ is a constant independent of $N$ and $k$.
\end{proposition}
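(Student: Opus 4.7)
The plan is to decompose the error as $U^n - u(t^n) = E^n + (u^N(t^n) - u(t^n))$ with $E^n := U^n - u^N(t^n) \in S_N$. By Theorem~\ref{th31} the second piece is already bounded by $CN^{1-\mu}$, so it suffices to prove $\max_n \|E^n\| \leq Ck^2$ under $k \leq \alpha/N$. I propose to establish this bound together with existence and uniqueness of $U^{n+1}$ by a simultaneous induction on $n$.

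The first step would be to introduce the consistency error $\tau^n \in S_N$ defined, for $\chi \in S_N$, by
\begin{equation*}
(u^N(t^{n+1}) - u^N(t^n),\chi) + k\bigl(\partial_x^3 u^N(t^{n+1/2}) + \partial_x f(u^N(t^{n+1/2})),\chi\bigr) = (\tau^n,\chi).
\end{equation*}
Taylor-expanding $u^N(t^{n\pm 1})$ around $t^{n+1/2}$ and using the semidiscrete equation (on $S_N$ the Galerkin nonlinearity $P_N(u^N u^N_x)$ agrees with $\partial_x f(u^N)$ when tested against elements of $S_N$), the $O(1)$ and $O(k^2)$ contributions cancel exactly and one is left with $\|\tau^n\| \leq Ck^3 \sup_{[0,T]} \|\partial_t^3 u^N\|$. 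Proposition~\ref{propo32} applied with $(j,l) = (3,0)$ guarantees the right-hand side is bounded precisely when $\mu \geq 10$, which is the smoothness hypothesis of the proposition.

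Subtracting the defining relation of $\tau^n$ from (\ref{41}) yields the error equation
\begin{equation*}
(E^{n+1} - E^n,\chi) + k\bigl(\partial_x^3 E^{n+1/2} + \partial_x[f(U^{n+1/2}) - f(u^N(t^{n+1/2}))],\chi\bigr) = -(\tau^n,\chi),\qquad \chi \in S_N.
\end{equation*}
Testing with $\chi = E^{n+1/2} \in S_N$, the dispersive term vanishes by periodicity. Writing $f(v) - f(w) = \tfrac{1}{2}(v+w)(v-w)$ and integrating by parts, the nonlinear contribution reduces to $-\tfrac{k}{4}(\partial_x W,(E^{n+1/2})^2)$ with $W = U^{n+1/2} + u^N(t^{n+1/2})$. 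Decomposing $U^{n+1/2} = \tfrac{1}{2}(u^N(t^n) + u^N(t^{n+1})) + E^{n+1/2}$ and combining the uniform bound $\|u^N\|_{1,\infty} \leq C$ from Proposition~\ref{propo32} with the inverse inequality $|\partial_x \psi|_\infty \leq CN^{3/2}\|\psi\|$ on $S_N$, one obtains $|\partial_x W|_\infty \leq C(1 + N^{3/2}\|E^{n+1/2}\|)$. Splitting the forcing by Young's inequality $2\|\tau^n\|\|E^{n+1/2}\| \leq k^{-1}\|\tau^n\|^2 + k\|E^{n+1/2}\|^2$ then delivers
\begin{equation*}
\|E^{n+1}\|^2 - \|E^n\|^2 \leq Ck\bigl(1 + N^{3/2}\|E^{n+1/2}\|\bigr)\|E^{n+1/2}\|^2 + Ck^5.
\end{equation*}

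To close the argument I would carry out an induction in which $\|E^m\| \leq \gamma k^2$ is assumed for $m \leq n$, with $\gamma$ a fixed constant to be determined. Under $kN \leq \alpha$ this gives $N^{3/2}\|E^{n+1/2}\| \leq C\alpha^{3/2} k^{1/2} \leq 1$ for $k$ small, so the recursion reduces to $\|E^{n+1}\|^2 - \|E^n\|^2 \leq Ck(\|E^n\|^2 + \|E^{n+1}\|^2) + Ck^5$, whence a discrete Gronwall step gives $\|E^{n+1}\| \leq C^{*}k^2$ with $C^{*}$ independent of $\gamma$, closing the induction as soon as $\gamma \geq C^{*}$. Existence of $U^{n+1}$ at each step is inherited from the Brouwer fixed-point argument recorded for (\ref{41}), while uniqueness is obtained by showing, via the same inverse inequality and the mesh condition with $\alpha$ sufficiently small, that the IMR-type fixed-point map defining $U^{n+1}$ is a contraction on the ball $\{V \in S_N : \|V - u^N(t^{n+1})\| \leq \gamma k^2\}$, within which any solution must lie by the error bound itself. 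The main obstacle is precisely this coupled bootstrap: the inverse-inequality factor $N^{3/2}\|E\|$ in the nonlinear term can only be tamed using both the induction hypothesis and the mesh restriction, and the $k^{-1}$-weighted Young splitting of the forcing is essential to preserve the $k^5$ scaling that eventually yields $\|E^n\| \leq Ck^2$.
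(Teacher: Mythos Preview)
Your overall strategy---compare $U^n$ to the semidiscrete solution $u^N(t^n)$, derive an energy recursion for $E^n=U^n-u^N(t^n)$, and combine with Theorem~\ref{th31}---is exactly what the paper does (it treats Proposition~\ref{propo41} as the $s=1$ case of the argument in Section~\ref{sec5}). The consistency bound $\|\tau^n\|\le Ck^3$ and the identification of the threshold $\mu\ge 10$ via Proposition~\ref{propo32} with $(j,l)=(3,0)$ are correct.

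There is, however, a genuine gap in your bootstrap. Your recursion
\[
\|E^{n+1}\|^2-\|E^n\|^2\le Ck\bigl(1+N^{3/2}\|E^{n+1/2}\|\bigr)\|E^{n+1/2}\|^2+Ck^5
\]
has $\|E^{n+1/2}\|$ on the right, and $E^{n+1/2}=\tfrac12(E^n+E^{n+1})$ involves $E^{n+1}$. Your induction hypothesis only controls $\|E^m\|$ for $m\le n$, so the assertion ``$N^{3/2}\|E^{n+1/2}\|\le C\alpha^{3/2}k^{1/2}$'' already presupposes $\|E^{n+1}\|\le\gamma k^2$, which is precisely what you want to conclude. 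The only a~priori information on $U^{n+1}$ is $\|U^{n+1}\|=\|U^0\|$, giving merely $\|E^{n+1}\|\le C$, and then the factor $kN^{3/2}$ blows up. The contraction remark at the end does not close the loop either: to run it you would first have to show that the fixed-point map sends the ball $\{\|V-u^N(t^{n+1})\|\le\gamma k^2\}$ into itself, which is again the missing estimate.

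The fix is elementary and removes the bootstrap altogether. After you reach $-\tfrac{k}{4}(\partial_xW,(E^{n+1/2})^2)$ and decompose $W=(\text{bounded }u^N\text{-part})+E^{n+1/2}$, the dangerous piece is
\[
-\tfrac{k}{4}\int (E^{n+1/2})^2\,\partial_xE^{n+1/2}\,dx=-\tfrac{k}{12}\int\partial_x\bigl(E^{n+1/2}\bigr)^3\,dx=0
\]
by periodicity. Hence the nonlinear term is bounded outright by $Ck\,|u^N_x|_\infty\,\|E^{n+1/2}\|^2\le Ck\|E^{n+1/2}\|^2$, the $N^{3/2}$ factor never appears, and a straight discrete Gronwall argument yields $\|E^n\|\le Ck^2$ with no circularity. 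The paper obtains the same effect by a different device: it introduces the auxiliary one-step IMR iterate $V^{n,1}$ starting from $u^N(t^n)$, proves the a~priori bound $\|V^{n,1}\|_{1,\infty}\le C$ (Lemma~\ref{lemma51}), and arranges the nonlinear difference so that the $W^{1,\infty}$-coefficient is $\|(V^{n,1}+V^n)/2\|_{1,\infty}$ rather than anything involving $U^{n+1}$ (Lemma~\ref{lemma52}); uniqueness is then verified a~posteriori via Lemma~\ref{lemma41} once the error bound gives $|U^n|_\infty,\,|Y^{n,1}|_\infty\le R$.
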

As the error analysis of the IMR fully discrete scheme (\ref{41}) may be viewed as a special case of the convergence proof for the general fully discrete scheme to be considered in the sequel, we will not present the proof of Proposition \ref{propo41} here.
\subsection{A Runge-Kutta-composition method}
\label{sec42}
We will consider a RK method with $s$ stages for the autonomous ode system $\dot{y}=\phi(y)$, whose Butcher tableau is of the form

\begin{equation}\label{44}
\begin{tabular}{c | c}
& $a_{ij}$\\ \hline
 & $b_{i}$
\end{tabular}=
\begin{tabular}{c | ccccc}
& $b_{1}/2$ & & &&\\
&  $b_{1}$ & $b_{2}/2$ &&&\\
&$b_{1}$&$b_{2}$&$\ddots$&&\\
&$\vdots$&$\vdots$ &&$\ddots$&\\
&$b_{1}$&$b_{2}$&$\cdots$&$\cdots$&$b_{s}/2$\\ \hline
 & $b_{1}$&$b_{2}$&$\cdots$&$\cdots$&$b_{s}$
\end{tabular},
\end{equation}
where the $b_{i}$ are nonzero real numbers. As has been pointed out in \cite{S-SA} all symplectic (canonical) RK schemes, i.~e. those satisfying $b_{i}a_{ij}+b_{j}a_{ji}-b_{i}b_{j}=0, i\leq i,j\leq s$, with lower triangular matrix $a_{ij}$ (i.~e. all diagonally implicit symplectic schemes) are of the form (\ref{44}).

It is well known, cf. e~g. \cite{Yoshida1990}, \cite{FrutosS1992}, \cite{SanzSC1994}, \cite{HairerLW2004}, and their references, that the RK scheme corresponding to the tableau (\ref{44}) is of {\it composition} type, since it may be constructed as the composition of $s$ steps of the IMR with stepsizes $b_{1}k, b_{2}k,\ldots,b_{s}k$, i.~e. in the case of $\dot{y}=\phi(y)$ it is equivalent to the scheme
\begin{eqnarray}
y^{n,1}&=&y^{n}+b_{1}k\phi\left(\frac{y^{n}+y^{n,1}}{2}\right),\label{45}\\
y^{n,j}&=&y^{n}+b_{j}k\phi\left(\frac{y^{n,j-1}+y^{n,j}}{2}\right),\;2\leq j\leq s,\nonumber\\
y^{n+1}&=&y^{n,s}.\nonumber
\end{eqnarray}
For example, a method mentioned in the above references and used in \cite{FrutosS1992} for the temporal discretization of the KdV equation corresponds to $s=3$ and
\begin{eqnarray}
&&b_{1}=(2+2^{1/3}+2^{-1/3})/3=\frac{1}{2-2^{1/3}}\cong 1.351,\nonumber\\
&& b_{2}=1-2b_{1}\cong -1.702,\quad b_{3}=b_{1},\label{46}
\end{eqnarray}
has order of accuracy $p=4$ and is symmetric since $b_{3}=b_{1}$. This scheme may be generalized using Yoshida's approach, \cite{Yoshida1990}, that yields recursively symplectic symmetric methods (in our case taking the IMR as base scheme) as follows. Let $\psi_{k}^{[2]}$ be the mapping that effects the step $n\mapsto n+1$ of the IMR with stepsize $k$. Then the method with $s=3$ may be viewed, in the notation of \cite{HairerLW2004}, as the composition
$$
\psi_{k}^{[4]}=\psi_{b_{3}k}^{[2]}\circ\psi_{b_{2}k}^{[2]}\circ\psi_{b_{1}k}^{[2]}.
$$ From $\psi_{k}^{[4]}$ one gets the sixth-order accurate symmetric method
with $s=3^{2}$
$$
\psi_{k}^{[6]}=\psi_{\gamma_{3}k}^{[4]}\circ\psi_{\gamma_{2}k}^{[4]}\circ\psi_{\gamma_{1}k}^{[4]},
$$ where $\gamma_{1}=\gamma_{3}=\frac{1}{2-2^{1/5}}, \gamma_{2}=1-2\gamma_{1}$. In general, given the method $\psi_{k}^{[2r]}$ with $s=3^{r-1}$ stages and order of accuracy $2r$, one may construct a symmetric scheme with $s=3^{r}$ stages of order of accuracy $2r+2$ by the formula
$$
\psi_{k}^{[2r+2]}=\psi_{\delta_{3,r}k}^{[2r]}\circ\psi_{\delta_{2,r}k}^{[2r]}\circ\psi_{\delta_{1,r}k}^{[2r]},
$$ where $\delta_{1,r}=\delta_{3,r}=\frac{1}{2-2^{\frac{1}{2r+1}}}, \delta_{2,r}=1-2\delta_{1,r}$.

Some properties of the resulting family of $s-$stage methods are summarized below.
\begin{itemize}
\item[(i)] The number of stages is $s=3^{p-1}$ and the order of accuracy of the method is $2p$, cf. \cite{Yoshida1990}.
\item[(ii)] $\displaystyle\sum_{i=1}^{s} b_{i}=1,\quad \displaystyle\sum_{i=1}^{s} b_{i}^{j}=0$, with $s=3^{p-1}$ and $j=3,5,\ldots,2p-1$, cf. \cite{Yoshida1990}.
\item[(iii)] The methods are symplectic and symmetric. Thus, when applied to an ode system with a Hamiltonian structure, they will preserve important properties of the system and behave well in long-time computations, \cite{FrutosS1992,SanzSC1994,HairerLW2004,Cano}.
\item[(iv)] Since some of the $b_{i}$ are negative, cf. e.~g. (\ref{46}) and property (ii) above, these schemes are not A-stable. They are however absolutely stable in a strip of finite width in $Re(z)\leq 0$ including the imaginary axis, and therefore it is expected that a stepsize restriction will be needed for stability in the case of dissipative problems.
\item[(v)] The implementation of the schemes is straightforward as it requires solving $s$ nonlinear systems of the size of the ode system, as is evident from e.~g. (\ref{45}).
\end{itemize}
\subsection{The fully discrete scheme. $L^{2}-$conservation, existence and uniqueness of solutions}
\label{sec43}

The high accuracy, straightforward manner of implementation, and the good stability properties (in the case of conservative, stiff ode systems) of the family of RK composition methods given by (\ref{44}) (equivalently by (\ref{45})), make them a good choice as time-stepping schemes for the semidiscrete ivp (\ref{dd24}). 

As mentioned already, the fully discrete scheme to be fully analyzed in the sequel is obtained by applying the $s-$stage RK compositon method (\ref{44}) or (\ref{45}) to the semidiscrete problem (\ref{dd24}) when $s=3$ and the coefficients $b_{i}$ are given by (\ref{46}). However, with the exception of the estimation of the local temporal error in section \ref{sec44}, the rest of the proof of convergence holds for the general $s-$stage scheme and therefore we will treat the general case and specialize $s=3$ when needed. 
For simplifying notation we let $F:S_{N}\rightarrow S_{N}$ be the nonlinear map defined for $v\in S_{N}$ by the equation
\begin{eqnarray*}
(F(v),\chi)=(-v_{xxx}-P_{N}f(v)_{x},\chi), \quad \forall \chi\in S_{N},\label{47}
\end{eqnarray*}
where $f(v)=v^{2}/2$, or equivalently, by
\begin{eqnarray*}
F(v)=-v_{xxx}-P_{N}f(v)_{x}.\label{48}
\end{eqnarray*}
Note that, by periodicity,
\begin{eqnarray}
(F(v),v)=0,\quad \forall v\in S_{N},\label{49}
\end{eqnarray}
and that the semidiscrete ivp (\ref{dd24}) may be written as
\begin{eqnarray}
u_{t}^{N}&=&F(u^{N}),\quad 0\leq t\leq T,\nonumber\\
u^{N}(0)&=&P_{N}u_{0}.\label{410}
\end{eqnarray}
Using the notation introduced for the temporal discretization in section \ref{sec41} we write the RK scheme (\ref{44}) applied to (\ref{410}) as follows. For $0\leq n\leq M$ we seek $U^{n}\in S_{N}$, approximating $u^{N}(t^{n})$, and $U^{n,i}\in S_{N}, 1\leq i\leq s$, such that for $0\leq n\leq M-1$
\begin{eqnarray}
U^{n,i}&=&U^{n}+\frac{b_{i}k}{2}F(U^{n,i})+k \sum_{j=1}^{i-1}b_{j}F(U^{n,j}),\quad 1\leq i\leq s,\nonumber\\
U^{n+1}&=&U^{n}+k\sum_{i=1}^{s}b_{i}F(U^{n,i}),\label{411}
\end{eqnarray}
and
$$U^{0}=P_{N}u_{0}.$$ By eliminating recursively the intermediate nonlinear terms and defining $\mu_{ij}=2(-1)^{i+j+1}$, $1\leq j<i\leq s$, it is easy to check that the scheme (\ref{411}) may be equivalently stated for $0\leq n\leq M-1$ as
\begin{eqnarray}
U^{n,i}&=&(-1)^{i+1}U^{n}+\frac{b_{i}k}{2}F(U^{n,i})+\sum_{j=1}^{i-1}\mu_{ij}U^{n,j},\quad 1\leq i\leq s,\nonumber\\
U^{n+1}&=&(-1)^{s}U^{n}+2\sum_{j=1}^{s}(-1)^{s-j}U^{n,j},\label{412}
\end{eqnarray}
and
$$U^{0}=P_{N}u_{0}.$$ As already mentioned, the scheme (\ref{411}) is also equivalent to the following IMR-type formulation (cf. (\ref{45})) in which, given $U^{n}\in S_{N}, 0\leq n\leq M-1$, $Y^{n,i}\in S_{N}, i\leq i\leq s$, and $U^{n+1}\in S_{N}$ are computed by the formulas
\begin{eqnarray}
Y^{n,1}&=&U^{n} +{kb_{1}}{F}\left(\frac{Y^{n,1}+U^{n}}{2}\right),\nonumber\\
Y^{n,i}&=&Y^{n,i-1} +{kb_{i}}{F}\left(\frac{Y^{n,i}+Y^{n,i-1}}{2}\right),\quad 2\leq i\leq s,\nonumber\\
U^{n+1}&=&Y^{n,s}.\label{413}
\end{eqnarray} 
and
$$U^{0}=P_{N}u_{0}.$$
Note that the intermediate approximations $Y^{n,i}$ of (\ref{413}) are related to the $U^{n,i}$ of (\ref{411}) or (\ref{412}) by the formulas
\begin{eqnarray*}
Y^{n,i}=2U^{n,i}-Y^{n,i-1},\; 2\leq i\leq s,\quad Y^{n,1}=2U^{n,1}-U^{n}.\label{414}
\end{eqnarray*}
Any one of the formulations (\ref{411})-(\ref{413}) may be used to study the properties and the convergence of the fully discrete scheme. We will mainly use (\ref{413}) which brings out the fact that the scheme is a $s-$stage composition method with IMR as its base scheme.

As is expected by the symplecticity of the RK method (\ref{44}), (\ref{45}), the fully discrete schemes (\ref{411})-(\ref{413}) are $L^{2}-$conservative. Taking, for example, (\ref{413}) and supposing that given $U^{n}\in S_{N}$ it has a solution $Y^{n,i}\in S_{N}, 1\leq i\leq s$, then, if $i\geq 2$
\begin{eqnarray*}
(Y^{n,i}-Y^{n,i-1},Y^{n,i}+Y^{n,i-1})=2kb_{i}\left({F}\left(\frac{Y^{n,i}+Y^{n,i-1}}{2}\right),\frac{Y^{n,i}+Y^{n,i-1}}{2}\right),
\end{eqnarray*}
which, in view of (\ref{49}) yields $||Y^{n,i-1}||=||Y^{n,i}||$. The same argument works for $i=1$ if we put $Y^{n,0}=U^{n}$ and yields $||Y^{n,1}||=||U^{n}||$. Therefore $||U^{n+1}||=||Y^{n,i}||=||U^{n}||, 1\leq i\leq s$, and overall
$$||U^{n}||=||U^{0}||,\; 0\leq n\leq M,$$ provided $U^{n}, 1\leq n\leq M$ exist. The existence of solutions may be proved by a variant of Brouwer's fixed-point theorem. We use again (\ref{413}).
\begin{proposition}
\label{propo42}
Given $U^{n}\in S_{N}$, there are $Y^{n,i}\in S_{N}, 1\leq i\leq s$, and $U^{n+1}$ in $S_{N}$ satisfying (\ref{413}).
\end{proposition}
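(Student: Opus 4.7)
The plan is to build the solution one IMR substep at a time, using the conservation property \eqref{49} to reduce each substep to a fixed-point equation to which a standard Brouwer-type argument applies. Setting $Y^{n,0}:=U^{n}$, I proceed inductively on $i$: assuming $Y^{n,i-1}\in S_N$ is given, I seek $Y^{n,i}\in S_N$ satisfying
\begin{equation*}
Y^{n,i}=Y^{n,i-1}+kb_{i}F\!\left(\frac{Y^{n,i}+Y^{n,i-1}}{2}\right).
\end{equation*}
The natural change of variables is $W=(Y^{n,i}+Y^{n,i-1})/2$, which turns the equation into the single fixed-point problem
\begin{equation*}
W=Y^{n,i-1}+\frac{kb_{i}}{2}F(W),\qquad W\in S_{N}.
\end{equation*}
Once such a $W$ is produced, $Y^{n,i}:=2W-Y^{n,i-1}$ recovers the original unknown.

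Next I would introduce the continuous map $H:S_{N}\to S_{N}$ defined by $H(W)=W-Y^{n,i-1}-\tfrac{kb_{i}}{2}F(W)$, and exploit the crucial antisymmetry identity \eqref{49}, namely $(F(v),v)=0$ for all $v\in S_{N}$. Taking the $L^{2}$ inner product of $H(W)$ with $W$ kills the nonlinear contribution and leaves
\begin{equation*}
(H(W),W)=\|W\|^{2}-(Y^{n,i-1},W)\ \geq\ \|W\|\bigl(\|W\|-\|Y^{n,i-1}\|\bigr),
\end{equation*}
by Cauchy--Schwarz. In particular, choosing any radius $R>\|Y^{n,i-1}\|$, the quantity $(H(W),W)$ is strictly positive on the sphere $\|W\|=R$ in the finite-dimensional inner-product space $S_{N}$.

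At this stage I would invoke the standard consequence of Brouwer's fixed-point theorem (the same variant alluded to for the IMR in section~\ref{sec41}): a continuous self-map of a finite-dimensional inner-product space whose associated vector field points outward on some sphere must vanish somewhere inside that sphere. This yields $W^{*}\in S_{N}$ with $H(W^{*})=0$, hence an admissible intermediate stage $Y^{n,i}=2W^{*}-Y^{n,i-1}\in S_{N}$. Iterating this for $i=1,\ldots,s$ produces all the stages, and setting $U^{n+1}:=Y^{n,s}$ completes the construction.

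There is no serious obstacle here: the finite-dimensionality of $S_{N}$ makes continuity of $F$ automatic, the key algebraic input is simply the skew-adjointness property \eqref{49} which decouples the nonlinear term from the a priori estimate, and the Brouwer argument is the standard one. The only point that deserves care is the choice of the substitution $W=(Y^{n,i}+Y^{n,i-1})/2$, because working directly with $Y^{n,i}$ as unknown would force one to deal with $(F((V+Y^{n,i-1})/2),V)$, where the identity \eqref{49} no longer produces an outright cancellation; the midpoint variable restores the symmetry needed to apply \eqref{49} cleanly. Note that the proof makes no use of the signs of the $b_{i}$, so existence holds unconditionally in $k$ and $N$, in accordance with the symplectic/composition structure of the scheme.
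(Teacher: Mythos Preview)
Your proposal is correct and follows essentially the same route as the paper: the same midpoint substitution $W=(Y^{n,i}+Y^{n,i-1})/2$, the same map $H(W)=W-Y^{n,i-1}-\tfrac{kb_i}{2}F(W)$, the same use of \eqref{49} to get $(H(W),W)\geq \|W\|(\|W\|-\|Y^{n,i-1}\|)$, and the same Brouwer-type conclusion applied recursively in $i$. The only cosmetic difference is that the paper works on the sphere of radius exactly $\|Y^{n,i-1}\|$ (with $(H(W),W)\geq 0$) rather than a slightly larger one, and notes explicitly that continuity of $F$ on $S_N$ follows from the inverse inequalities \eqref{dd23}.
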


\begin{proof} Putting $Z=\frac{Y^{n,1}+U^{n}}{2}$, we write the first equation in (\ref{413}) in the form $Z-U^{n}=\frac{kb_{1}}{2}F(Z)$. Hence, if we define $G:S_{N}\rightarrow S_{N}$ for $v\in S_{N}$ as $G(v)=v-U^{n}-\frac{kb_{1}}{2}F(v)$, for $\chi\in S_{N}$ we have
$(G(v),\chi)=(v-U^{n},\chi)-\frac{kb_{1}}{2}(F(v),\chi)$. Taking $\chi=v$ we get, in view of (\ref{49}), $(G(v),v)=||v||^{2}-(U^{n},v)\geq ||v||\left(||v||-||U^{n}||\right)$. Therefore, if $||v||=||U^{n}||$, then $(G(v),v)\geq 0$. By the definition of $F$ and the inverse inequalities (\ref{dd23}) it follows that $F$, and hence $G$, is continuous on $S_{N}$. By a well-known variant of Brouwer's fixed-point theorem (see e.~g. Lemma 3.1 of \cite{BonaDKM1995}), there exists $Z\in S_{N}$ with $||Z||=||U^{n}||$, such that $G(Z)=0$, i.~e. $Z-U^{n}=\frac{kb_{1}}{2}F(Z)$, and the existence of $Y^{n,1}$ follows. (For $Y^{n,1}$ we know {\it a priori} that $||Y^{n,1}||=||U^{n}||$.) In an analogous way we may prove recursively the existence of $Y^{n,i}, 2\leq i\leq s$, satisfying (\ref{413}). 
\end{proof}

The uniqueness of solutions of the nonlinear systems represented by the nonlinear equations in (\ref{413}) will be shown in the course of the proof of convergence of the fully discrete scheme in section \ref{sec5}. The following lemma establishes uniqueness under a condition that will be verified in section \ref{sec5}.
\begin{lemma}
\label{lemma41}
Suppose that $U^{n}$ and $Y^{n,i}, 1\leq i\leq s$, are solutions of (\ref{413}) satisfying $|U^{n}|_{\infty}\leq R, |Y^{n,i}|_{\infty}\leq R, 1\leq i\leq s$, for some constant $R$. Then the $Y^{n,i}, 1\leq i\leq s$, are unique, provided that
\begin{eqnarray*}
\frac{k}{2}\max_{1\leq i\leq s}|b_{i}|C_{0}NR<1,
\end{eqnarray*}
where $C_{0}$ is the constant in the inverse properties (\ref{dd23}). 
\end{lemma}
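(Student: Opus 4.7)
The plan is to argue by induction on the stage index $i$, using the fact that each stage equation in (\ref{413}) couples $Y^{n,i}$ only to the preceding intermediate value. Suppose $(Y^{n,i})_{i=1}^{s}$ and $(\tilde Y^{n,i})_{i=1}^{s}$ are two solutions of (\ref{413}) sharing the common $U^{n}$ and both satisfying the $L^{\infty}$ bound; put $Y^{n,0}:=U^{n}$ by convention. Having established $Y^{n,j}=\tilde Y^{n,j}$ for all $j<i$ (the base case $i=1$ holds trivially from $Y^{n,0}=\tilde Y^{n,0}=U^{n}$), set $W_{i}=Y^{n,i}-\tilde Y^{n,i}\in S_{N}$, $Z=(Y^{n,i}+Y^{n,i-1})/2$, and $\tilde Z=(\tilde Y^{n,i}+Y^{n,i-1})/2$. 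Then $Z-\tilde Z=W_{i}/2$, $|Z+\tilde Z|_{\infty}\leq 2R$, and subtracting the two $i$-th stage equations gives
\begin{equation*}
W_{i}=kb_{i}\bigl[F(Z)-F(\tilde Z)\bigr].
\end{equation*}

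Next, I take the $L^{2}$ inner product of this identity with $W_{i}$ itself and exploit the two structural features of $F$. Because $W_{i}\in S_{N}$, the projector $P_{N}$ in the definition of $F$ can be dropped inside the inner product. Expanding $F(Z)-F(\tilde Z)=-(Z-\tilde Z)_{xxx}-[f(Z)-f(\tilde Z)]_{x}$, the third-derivative term contributes $-\tfrac{1}{2}(W_{i,xxx},W_{i})$, which vanishes by periodicity (skew-adjointness of $\partial_{x}^{3}$). The nonlinear term is rewritten using $f(Z)-f(\tilde Z)=\tfrac12(Z+\tilde Z)(Z-\tilde Z)=\tfrac14(Z+\tilde Z)W_{i}$; integrating the $\partial_{x}$ by parts and applying H\"older with the $L^{\infty}$ bound on $Z+\tilde Z$ together with the inverse estimate $\|W_{i,x}\|\leq C_{0}N\|W_{i}\|$ from (\ref{dd23}) yields
\begin{equation*}
\|W_{i}\|^{2}=kb_{i}\bigl(F(Z)-F(\tilde Z),W_{i}\bigr)\leq \tfrac{k\,|b_{i}|\,C_{0}NR}{2}\,\|W_{i}\|^{2}.
\end{equation*}

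Under the hypothesis $\tfrac{k}{2}\max_{1\leq i\leq s}|b_{i}|\,C_{0}NR<1$ the prefactor is strictly less than $1$, so $W_{i}=0$, completing the induction step and thereby proving $Y^{n,i}=\tilde Y^{n,i}$ for all $1\leq i\leq s$. The one point that requires some care is arranging the integration by parts so that only a single spatial derivative is transferred onto $W_{i}$: a cruder bound based on $\|F(Z)-F(\tilde Z)\|\leq C_{0}N^{3}\|Z-\tilde Z\|$ (estimating the $\partial_{x}^{3}$ term directly via (\ref{dd23})) would produce a $k N^{3}$-type restriction. It is precisely the combination of the skew-symmetry cancellation with pulling $|Z+\tilde Z|_{\infty}$ out of the nonlinear piece that reduces the power of $N$ to one and delivers the linear Courant-type smallness condition stated in the lemma.
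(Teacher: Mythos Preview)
Your proof is correct and follows essentially the same route as the paper's: take the $L^{2}$ inner product of the difference of the two stage equations with $W_{i}$, use skew-symmetry to kill the $\partial_{x}^{3}$ contribution, and control the nonlinear remainder via the $L^{\infty}$ bound together with one application of the inverse inequality (\ref{dd23}), obtaining the same constant $\tfrac{k}{2}|b_{i}|C_{0}NR$. The only cosmetic difference is that the paper applies (\ref{dd23}) to $P_{N}(f(Z)-f(\tilde Z))_{x}\in S_{N}$ directly, whereas you integrate by parts first and apply it to $W_{i,x}$; both variants yield the identical smallness condition.
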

\begin{proof} We prove the uniqueness of $Y^{n,1}$; that of $Y^{n,i}, i\geq 2$, follows by a similar argument. Suppose $Z_{1}, Z_{2}\in S_{N}$ are two solutions of the first equation in (\ref{413}). (Note that $||Z_{1}||=||Z_{2}||=||U^{n}||$.) Then $Z_{1}-Z_{2}=kb_{1}\left(F\left(\frac{Z_{1}+U^{n}}{2}\right)-F\left(\frac{Z_{2}+U^{n}}{2}\right)\right)$. Taking the inner product of both sides of this equation with $Z_{1}-Z_{2}$ and using periodicity, the Cauchy-Schwarz inequality and (\ref{dd23}), we have if $Z_{1}\neq Z_{2}$ that
\begin{eqnarray*}
||Z_{1}-Z_{2}||\leq k|b_{1}|C_{0}N\left|\left|f\left(\frac{Z_{1}+U^{n}}{2}\right)-f\left(\frac{Z_{2}+U^{n}}{2}\right)\right|\right|.
\end{eqnarray*}
By the definition of $f$ and the hypothesis of the lemma we see that
\begin{eqnarray*}
\left|\left|f\left(\frac{Z_{1}+U^{n}}{2}\right)-f\left(\frac{Z_{2}+U^{n}}{2}\right)\right|\right|&\leq &\frac{||Z_{1}-Z_{2}||}{2}\frac{1}{4}|Z_{1}+U^{n}+Z_{2}+U^{n}|_{\infty}\\
&\leq & \frac{1}{2}||Z_{1}-Z_{2}||R.
\end{eqnarray*}
These two inequalities imply that $1\leq \frac{1}{2}k|b_{1}|C_{0}NR$, which contradicts the other hypothesis of the lemma. Therefore $Z_{1}=Z_{2}$. 
\end{proof}
\subsection{Local temporal error of the fully discrete scheme for $s=3$}
\label{sec44}
In this section we suppose that $s=3$ and that the coefficients $b_{i}$ are given by (\ref{46}).
The local temporal error of the resulting scheme (\ref{413}) is defined in terms of the semidiscrete approximation $u^{N}$. For this purpose we let for $0\leq n\leq M$, $V^{n}=u^{N}(t^{n})$, and $V^{n,i}\in S_{N}$ for $0\leq i\leq 3, 0\leq n\leq M-1$, be given by
\begin{eqnarray}
V^{n,0}&=&V^{n},\nonumber\\
V^{n,i}&=&V^{n,i-1}+kb_{i}F\left(\frac{V^{n,i}+V^{n,i-1}}{2}\right),\; 1\leq i\leq 3.\label{415}
\end{eqnarray}
The local temporal error $\theta^{n}\in S_{N}$, $0\leq n\leq M-1$, is then
\begin{eqnarray}
\theta^{n}=V^{n+1}-V^{n,3}\equiv u^{N}(t^{n+1})-V^{n,3}.\label{416}
\end{eqnarray}
Obviously, cf. section \ref{sec43}, the $V^{n,i}$ exist and satisfy the $L^{2}$-conservation laws
\begin{eqnarray*}
||V^{n,i}||=||V^{n}||=||u^{N}(t^{n})||=||u^{N}(0)||.\label{417}
\end{eqnarray*}
The consistency of the scheme is established in the following
\begin{proposition}
\label{propo43}
Let $V^{n,i}$ and $\theta^{n}$ be defined by (\ref{415}) and (\ref{416}) and suppose the $b_{i}$ are given by (\ref{46}). Let $u$, the solution of (\ref{dd12}), belong to $H^{\mu}$ for $0\leq t\leq T$ and let $\mu$ be sufficiently large. Suppose there exists a constant $C_{1}$ such that $kN\leq C_{1}$. Then, for $k$ sufficiently small, there exists a constant $C$ independent of $k$ and $N$ such that
\begin{eqnarray*}
\max_{0\leq n\leq M-1}||\theta^{n}||\leq Ck^{5}.
\end{eqnarray*}
\end{proposition}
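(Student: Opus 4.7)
The plan is to expand each intermediate stage $V^{n,i}$ as $u^N(\tau^{n,i})$ plus a perturbation in powers of $k$, carry the expansion through order $k^5$, and then invoke the algebraic identities satisfied by the $b_i$ to cancel the leading error contributions. Fix $n$, set $\tau^{n,0}=t^n$ and $\tau^{n,3}=t^{n+1}$, let $\sigma^{n,i}=\frac{1}{2}(\tau^{n,i-1}+\tau^{n,i})$ be the midpoint of the $i$-th IMR substep (of length $b_ik$), and define $E^{n,i}=V^{n,i}-u^N(\tau^{n,i})$. Subtracting $u^N(\tau^{n,i})-u^N(\tau^{n,i-1})$ from both sides of (\ref{415}), the recurrence becomes
\[
E^{n,i}-E^{n,i-1}=kb_i\bigl[F\bigl(\tfrac{1}{2}(V^{n,i}+V^{n,i-1})\bigr)-F\bigl(u^N(\sigma^{n,i})\bigr)\bigr]+\Delta^{n,i},
\]
where $\Delta^{n,i}=kb_iF(u^N(\sigma^{n,i}))-[u^N(\tau^{n,i})-u^N(\tau^{n,i-1})]$ is the IMR local truncation error produced on the substep $[\tau^{n,i-1},\tau^{n,i}]$ when the exact semidiscrete orbit is used as input. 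Note $E^{n,0}=0$ and $\theta^n=-E^{n,3}$.

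I would first Taylor-expand $\Delta^{n,i}$ about $\sigma^{n,i}$, repeatedly substituting $\partial_t u^N=F(u^N)$ to re-express temporal derivatives in terms of nonlinear spatial operators applied to $u^N$; by the time-reversibility of the IMR only odd powers of $b_ik$ appear, yielding
\[
\Delta^{n,i}=(b_ik)^3\Psi_3\bigl(u^N(\sigma^{n,i})\bigr)+(b_ik)^5\Psi_5\bigl(u^N(\sigma^{n,i})\bigr)+r^{n,i},
\]
with $\Psi_3,\Psi_5$ explicit combinations of $F$ and its Fréchet derivatives and $r^{n,i}$ an integral remainder of order $k^7$ in $L^2$. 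In the nonlinear difference on the right I would substitute $F(v+\eta)=F(v)+F'(v)\eta+\frac{1}{2}F''(v)(\eta,\eta)+\cdots$ around $v=u^N(\sigma^{n,i})$ and the ansatz $E^{n,i}=k^3W_3^{n,i}+k^4W_4^{n,i}+k^5W_5^{n,i}+\widetilde r^{n,i}$, obtaining a linear recursion in $i$ for the coefficients $W_j^{n,i}$. Solving this recursion at $i=3$ and Taylor-expanding every evaluation at $\sigma^{n,i}$ further about $t^n$, the $k^3$-contribution to $E^{n,3}$ comes out proportional to $(b_1^3+b_2^3+b_3^3)\Psi_3(u^N(t^n))$, which vanishes by property (ii) listed after (\ref{46}), and the $k^4$-contribution vanishes because the scheme is symmetric ($b_1=b_3$). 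Hence $E^{n,3}=k^5W_5^{n,3}+\widetilde r^{n,3}$, and only these two terms remain to be estimated in $L^2$.

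The main obstacle is the uniform control, in both $k$ and $N$, of $W_5^{n,3}$ and the accumulated remainders $r^{n,i}$ and $\widetilde r^{n,i}$. Since $F(v)=-v_{xxx}-P_Nf(v)_x$ costs three spatial derivatives, each application of $F$, $F'$ or $F''$ to an element of $S_N$ of size $O(k^j)$ in $L^2$ formally enlarges the $L^2$ norm by a factor $N^3$ via the inverse inequality (\ref{dd23}); these losses are absorbed by matching factors of $k$ under the hypothesis $kN\leq C_1$, but the implicit equation determining each $E^{n,i}$ from $E^{n,i-1}$ can be closed by a contraction argument only when $k$ is taken small enough. The expressions $\Psi_3$, $\Psi_5$ and $W_5^{n,3}$ are polynomial in $u^N$ and its spatial derivatives at times in $[t^n,t^{n+1}]$, and the integral remainders bring in $\partial_t^j u^N$ for $j$ up to $5$; Proposition \ref{propo32} bounds all of these uniformly in $N$ provided $\mu$ is taken sufficiently large, which justifies the corresponding hypothesis of the proposition. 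Once the remainders are shown to be of higher order than $k^5$ in $L^2$ and $\|W_5^{n,3}\|$ is bounded independently of $N$, the estimate $\max_n\|\theta^n\|\leq Ck^5$ follows.
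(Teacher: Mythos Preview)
Your overall plan---write $E^{n,i}=V^{n,i}-u^N(\tau^{n,i})$, expand it as $k^3W_3^{n,i}+k^4W_4^{n,i}+\cdots$, and use $\sum b_i^3=0$ together with the palindromic symmetry $b_1=b_3$ to kill the $k^3$ and $k^4$ coefficients at $i=3$---is exactly the structure of the paper's proof. The paper carries out the same asymptotic ansatz (writing $V^{n,1}=u^N(\tau^{n,1})+A_1k^3+A_2k^4+e^{n,1}$, etc.), determines the coefficients explicitly, and verifies the cancellations at $i=3$ by direct algebraic computation rather than by invoking symmetry abstractly; your higher-level appeal to symmetry is a legitimate shortcut at the formal level, though making it rigorous with $N$-uniform constants still requires tracking the remainders as the paper does.

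The genuine gap in your sketch is the treatment of the stiff $\partial_x^3$ term. You write that ``each application of $F$, $F'$ or $F''$ \ldots enlarges the $L^2$ norm by a factor $N^3$ via the inverse inequality; these losses are absorbed by matching factors of $k$ under $kN\le C_1$''. This cannot work as stated: one factor of $k$ absorbs at most one factor of $N$, so a direct $N^3$ loss would force $kN^3=O(1)$, far stronger than the hypothesis. Likewise, closing the implicit equation for $E^{n,i}$ by a contraction on the full $F$ would require $k\|F'\|\lesssim kN^3$ to be small. The paper avoids this entirely by exploiting the antisymmetry $(\partial_x^3 v,v)=0$: one takes the $L^2$ inner product of the equation for the residual $e^{n,i}$ with $e^{n,i}$ itself, so the dangerous term $\frac{kb_i}{2}\partial_x^3 e^{n,i}$ disappears and only the first-order nonlinear part of $F'$ survives, which does cost merely one derivative and is controlled under $kN\le C_1$. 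What remains is the cross term $k(\partial_x^3 e^{n,i-1},e^{n,i})$, which does not vanish; for this the paper proves separately the auxiliary bound $k\|\partial_x^3 e^{n,i-1}\|\le Ck^5$ (their (4.45), (4.73)), obtained by reading it off the defining equation once $\|e^{n,i-1}\|$ is already known, and this step again uses only $kN\le C_1$. Your sketch would be complete once you replace the contraction/inverse-inequality mechanism by this energy argument.
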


\begin{proof}
The plan of the proof is to obtain
asymptotic expressions of the $V^{n,i}, i=1,2$, of the form
\begin{eqnarray}
V^{n,1}&=&u^{N}(\tau^{n,1})+A_{1}k^{3}+A_{2}k^{4}+e^{n,1},\label{418}\\
V^{n,2}&=&u^{N}(\tau^{n,2})+B_{1}k^{3}+B_{2}k^{4}+e^{n,2},\label{419}
\end{eqnarray}
where $\tau^{n,1}=t^{n}+kb_{1}, \tau^{n,2}=t^{n}+k(b_{1}+b_{2})$,
 $A_{i}, B_{i}, e^{n,i}\in S_{N}$ and $||e^{n,i}||\leq C k^{5}, i=1,2$; then we show that
\begin{eqnarray}
V^{n,3}=u^{N}(t^{n+1})+e^{n,3},\label{420}
\end{eqnarray}
where $||e^{n,3}||\leq C k^{5}$, implying that $||\theta^{n}||\leq C k^{5}$. These estimates will be uniformly valid in $n$. The coefficients $A_{i}$ and $B_{i}$ will be of $O(1)$. Here, and in the sequel, $C$ will denote generically constants independent of $k$ and $N$.

\bigskip
\noindent{(i)}
\underline{Asymptotic expansion of $V^{n,1}$. Determination of the coefficients $A_{1}, A_{2}$}.

From (\ref{415}) it follows for $i=1$
\begin{eqnarray}
V^{n,1}&=&u^{N}-kb_{1}\left(\partial_{x}^{3}\left(\frac{V^{n,1}+u^{N}}{2}\right)\right)\nonumber\\
&&-\frac{kb_{1}}{4}P_{N}\left((V^{n,1}+u^{N})(V^{n,1}+u^{N})_{x}\right).\label{421}
\end{eqnarray}
In (\ref{421}) and in the sequel we put $u^{N}=u^{N}(t^{n})$. Similarly we will suppress the argument $t^{n}$ from derivatives of $u^{N}$, i.~e. write $u_{x}^{N}=u_{x}^{N}(t^{n}), u_{t}^{N}=u_{t}^{N}(t^{n})$, etc. For the intermediate times $\tau^{n,i}$ we will write in full $u^{N}(\tau^{n,i})$, etc.

We now insert in (\ref{421}) the assumed expression (\ref{418}) for $V^{n,1}$ and obtain in the left-hand side by Taylor expansion
\begin{eqnarray}
V^{n,1}
&=&u^{N}+kb_{1}u_{t}^{N}+\frac{k^{2}b_{1}^{2}}{2}u_{tt}^{N}+\frac{k^{3}b_{1}^{3}}{6}u_{ttt}^{N}+\frac{k^{4}b_{1}^{4}}{24}\partial_{t}^{4}u^{N}\nonumber\\
&&+\rho_{1}+A_{1}k^{3}+A_{2}k^{4}+e^{n,1},\label{422}
\end{eqnarray}
where $\rho_{1}$, the Taylor remainder of the expansion of $u^{N}(\tau^{n,1})$ about $t^{n}$, satisfies
\begin{eqnarray}
||\rho_{1}||_{j}\leq C k^{5}\max_{t}||\partial_{t}^{5}u^{N}||_{j}.\label{423}
\end{eqnarray}
For the linear $\partial_{x}^{3}$-term in the right hand side of (\ref{421}) we have
\begin{eqnarray}
\partial_{x}^{3}\left(\frac{V^{n,1}+u^{N}}{2}\right)&=&\partial_{x}^{3}\left(u^{N}+\frac{kb_{1}}{2}u_{t}^{N}+\frac{k^{2}b_{1}^{2}}{4}u_{tt}^{N}+\frac{k^{3}b_{1}^{3}}{12}u_{ttt}^{N}+\rho_{2}\right)\nonumber\\
&&+\frac{1}{2}\left(k^{3}\partial_{x}^{3}A_{1}+k^{4}\partial_{x}^{3}A_{2}+\partial_{x}^{3}e^{n,1}\right),\label{424a}
\end{eqnarray}
where 
\begin{eqnarray}
||\rho_{2}||_{j}\leq C k^{4}\max_{t}||\partial_{t}^{4}u^{N}||_{j}.\label{424}
\end{eqnarray}
Finally for the last term in the right hand side of (\ref{421}) we see that
\begin{eqnarray}
-\frac{kb_{1}}{4}P_{N}\left((V^{n,1}+u^{N})(V^{n,1}+u^{N})_{x}\right)
&=&-\frac{kb_{1}}{4}P_{N}\left((u^{N}+u^{N}(\tau^{n,1}))(u^{N}+u^{N}(\tau^{n,1}))_{x}\right.\nonumber\\
&&\left.+k^{3}((u^{N}+u^{N}(\tau^{n,1}))A_{1})_{x}\right.\nonumber\\
&&+\left. k^{4}((u^{N}+u^{N}(\tau^{n,1}))A_{2})_{x}+k^{6}A_{1}A_{1,x}\right.\nonumber\\
&&\left.+k^{7}(A_{1}A_{2})_{x}+k^{8}A_{2}A_{2,x}+\mathcal{A}(e^{n,1})\right),\label{426}
\end{eqnarray}
where
\begin{eqnarray}
\mathcal{A}(e^{n,1})&=&((u^{N}+u^{N}(\tau^{n,1}))e^{n,1})_{x}+k^{3}(A_{1}e^{n,1})_{x}\nonumber\\
&&+k^{4}(A_{2}e^{n,1})_{x}+e^{n,1}e^{n,1}_{x}.\label{427}
\end{eqnarray}
We use now (\ref{422}) in the left-hand side and (\ref{424a}) and (\ref{426}) in the right-hand side of (\ref{421}) and obtain
\begin{eqnarray}
&&u^{N}+kb_{1}u_{t}^{N}+\frac{k^{2}b_{1}^{2}}{2}u_{tt}^{N}+\frac{k^{3}b_{1}^{3}}{6}u_{ttt}^{N}+\frac{k^{4}b_{1}^{4}}{24}\partial_{t}^{4}u^{N}+\rho_{1}+A_{1}k^{3}+A_{2}k^{4}+e^{n,1}\nonumber\\
&&=u^{N}-kb_{1}\partial_{x}^{3}\left(u^{N}+\frac{kb_{1}}{2}u_{t}^{N}+\frac{k^{2}b_{1}^{2}}{4}u_{tt}^{N}+\frac{k^{3}b_{1}^{3}}{12}u_{ttt}^{N}+\rho_{2}\right)\nonumber\\
&&-\frac{kb_{1}}{2}\left(k^{3}\partial_{x}^{3}A_{1}+k^{4}\partial_{x}^{3}A_{2}+\partial_{x}^{3}e^{n,1}\right)\nonumber\\
&&-\frac{kb_{1}}{4}P_{N}\left((u^{N}+u^{N}(\tau^{n,1}))(u^{N}+u^{N}(\tau^{n,1}))_{x}+k^{3}((u^{N}+u^{N}(\tau^{n,1}))A_{1})_{x}\right.\nonumber\\
&&+\left. k^{4}((u^{N}+u^{N}(\tau^{n,1}))A_{2})_{x}+k^{6}A_{1}A_{1,x}\right.\nonumber\\
&&+\left.k^{7}(A_{1}A_{2})_{x}+k^{8}A_{2}A_{2,x}+\mathcal{A}(e^{n,1})\right).\label{428}
\end{eqnarray}
We now equate the terms of equal powers of $k$ in the left- and right-hand side of (\ref{428}), after expanding the $u^{N}(\tau^{n,1})$ terms about $t^{n}$. The $O(1)$ terms are obviously identical. We also have:

{\it $O(k)$ terms}:
\begin{eqnarray*}
kb_{1}u_{t}^{N}&=&-kb_{1}\partial_{x}^{3}u^{N}-\frac{kb_{1}}{4}P_{N}\left(2u^{N}\cdot 2u_{x}^{N}\right)
=-kb_{1}\left(\partial_{x}^{3}u^{N}+P_{N}(u^{N}u_{x}^{N})\right),
\end{eqnarray*}
which is an identity in view of the definition of $u^{N}$, cf. (\ref{dd36}).

{\it $O(k^{2})$ terms}:
\begin{eqnarray*}
\frac{k^{2}b_{1}^{2}}{2}u_{tt}^{N}&=&-\frac{k^{2}b_{1}^{2}}{2}\partial_{x}^{3}u_{t}^{N}-\frac{kb_{1}}{4}P_{N}\left(2kb_{1}u^{N}u_{tx}^{N}+2kb_{1}u_{t}^{N}u_{x}^{N}\right)\\
&=&-\frac{k^{2}b_{1}^{2}}{2}\left(\partial_{x}^{3}u_{t}^{N}+P_{N}(u^{N}u_{x}^{N})_{t}\right),
\end{eqnarray*}
which is an identity. (Differentiate (\ref{dd36}) with respect to $t$.)

{\it $O(k^{3})$ terms}:
\begin{eqnarray}
\frac{k^{3}b_{1}^{3}}{6}\partial_{t}^{3}u^{N}+k^{3}A_{1}=-\frac{k^{3}b_{1}^{3}}{4}\partial_{x}^{3}u_{tt}^{N}-kb_{1}T_{2},\label{429}
\end{eqnarray}
where $T_{2}$ is the sum of the $O(k^{2})$ terms of the Taylor expansion of
$$P_{N}\left(\left(\frac{u^{N}+u^{N}(\tau^{n,1})}{2}\right)\left(\frac{u^{N}+u^{N}(\tau^{n,1})}{2}\right)_{x}\right),$$ about $t=t^{n}$ and is given by
\begin{eqnarray*}
T_{2}
&=&P_{N}\left(\frac{k^{2}b_{1}^{2}}{4}u^{N}u_{xtt}^{N}+\frac{k^{2}b_{1}^{2}}{4}u_{t}^{N}u_{xt}^{N}+\frac{k^{2}b_{1}^{2}}{4}u_{tt}^{N}u_{x}^{N}\right).
\end{eqnarray*}
From Leibniz's rule for differentiation of products we have
\begin{eqnarray*}
(vv_{x})_{tt}=vv_{xtt}+2v_{t}v_{xt}+v_{tt}v_{x}.
\end{eqnarray*}
Therefore
\begin{eqnarray}
T_{2}=\frac{k^{2}b_{1}^{2}}{4}P_{N}\left((u^{N}u_{x}^{N})_{tt}-u_{t}^{N}u_{xt}^{N}\right).\label{430}
\end{eqnarray}
Substituting (\ref{430}) into (\ref{429}) and using the fact 
 that $-\partial_{x}^{3}u_{tt}^{N}=u_{ttt}^{N}+P_{N}(u^{N}u_{x}^{N})_{tt}$, which follows by differentiation from (\ref{dd36}) we get
\begin{eqnarray*}
\frac{b_{1}^{3}}{6}\partial_{t}^{3}u^{N}+A_{1}&=&\frac{b_{1}^{3}}{4}u_{ttt}^{N}+\frac{b_{1}^{3}}{4}P_{N}(u^{N}u_{x}^{N})_{tt}
-\frac{b_{1}^{3}}{4}P_{N}\left((u^{N}u_{x}^{N})_{tt}\right)+\frac{b_{1}^{3}}{4}P_{N}\left(u_{t}^{N}u_{xt}^{N}\right),
\end{eqnarray*}
from which it follows that
\begin{eqnarray}
A_{1}=b_{1}^{3}\left(\frac{1}{12}u_{ttt}^{N}+\frac{1}{4}P_{N}\left(u_{t}^{N}u_{xt}^{N}\right)\right).\label{431}
\end{eqnarray}

{\it $O(k^{4})$ terms}:
\begin{eqnarray}
\frac{k^{4}b_{1}^{4}}{24}\partial_{t}^{4}u^{N}+k^{4}A_{2}&=&-\frac{k^{4}b_{1}^{4}}{12}\partial_{t}^{3}\partial_{x}^{3}u^{N}-\frac{k^{4}b_{1}}{2}\partial_{x}^{3}A_{1}-kb_{1}T_{3}-\frac{k^{4}b_{1}}{2}S_{0},\label{432}
\end{eqnarray}
where
\begin{eqnarray*}
T_{3}&=&P_{N}\left((\frac{u^{N}+u^{N}(\tau^{n,1})}{2})(\frac{u^{N}+u^{N}(\tau^{n,1})}{2})_{x}\right)\Big|_{O(k^{3})},\\
S_{0}&=&P_{N}\left((\frac{u^{N}+u^{N}(\tau^{n,1})}{2})A_{1})\right)_{x}\Big|_{O(1)}.
\end{eqnarray*}
Hence
\begin{eqnarray*}
T_{3}
&=&P_{N}\left(\frac{k^{3}b_{1}^{3}}{12}u^{N}u_{xttt}^{N}+\frac{k^{3}b_{1}^{3}}{8}u_{t}^{N}u_{xtt}^{N}+\frac{k^{3}b_{1}^{3}}{8}u_{tt}^{N}u_{xt}^{N}+\frac{k^{3}b_{1}^{3}}{12}u_{ttt}^{N}u_{x}^{N}\right).
\end{eqnarray*}
Leibniz's rule gives 
\begin{eqnarray*}
(vv_{x})_{ttt}=vv_{xttt}+3v_{t}v_{txt}+3v_{tt}v_{xt}+v_{ttt}v_{x}.
\end{eqnarray*}
Therefore
\begin{eqnarray}
-kb_{1}T_{3}=-\frac{k^{4}b_{1}^{4}}{12}P_{N}\left((u^{N}u_{x}^{N})_{ttt}-\frac{3}{2}\left(u_{t}^{N}u_{xtt}^{N}+u_{tt}^{N}u_{xt}^{N}\right)\right).\label{433}
\end{eqnarray}
In addition, in view of (\ref{431}) we get
\begin{eqnarray}
-\frac{k^{4}b_{1}}{2}S_{0}=-\frac{k^{4}b_{1}^{4}}{2}\left(\frac{1}{12}P_{N}\left(u^{N}u_{ttt}^{N}\right)_{x}+\frac{1}{4}P_{N}\left(u^{N}P_{N}\left(u_{t}^{N}u_{xt}^{N}\right)\right)_{x}\right).\label{434}
\end{eqnarray}
Therefore, by (\ref{431})-(\ref{434}), using Leibniz's formula for $(vv_{x})_{ttt}$ and replacing the linear term $\partial_{t}^{3}(-\partial_{x}^{3}u^{N})$ by $\partial_{t}^{4}u^{N}+P_{N}\partial_{t}^{3}(u^{N}u_{x}^{N})$ in view of (\ref{dd36}), we obtain, after some algebra, that
\begin{eqnarray}
A_{2}&=&\frac{b_{1}^{4}}{12}\partial_{t}^{4}u^{N}+\frac{b_{1}^{4}}{8}\left(-P_{N}\partial_{x}^{3}(u_{t}^{N}u_{xt}^{N})+2P_{N}(u_{t}^{N}u_{xtt}^{N}+u_{tt}^{N}u_{xt}^{N})\right.\nonumber\\
&&\left.-P_{N}\left(u^{N}P_{N}\left(u_{t}^{N}u_{xt}^{N}\right)\right)_{x}\right).\label{435}
\end{eqnarray}

\medskip
\noindent{(ii)}
\underline{Estimation of $e^{n,1}$}.

Having determined $A_{1}$ and $A_{2}$ we now equate the $O(k^{5})$ (and higher-order) terms in (\ref{428}) in order to find an equation for $e^{n,1}$. This gives
\begin{eqnarray*}
\rho_{1}+e^{n,1}&=&-kb_{1}\partial_{x}^{3}\rho_{2}-\frac{k^{5}b_{1}}{2}\partial_{x}^{3}A_{2}-\frac{kb_{1}}{2}\partial_{x}^{3}e^{n,1}\\
&&-kb_{1}P_{N}\left((\frac{u^{N}+u^{N}(\tau^{n,1})}{2})(\frac{u^{N}+u^{N}(\tau^{n,1})}{2})_{x}\right)\Big|_{O(k^{4})}\\
&&-\frac{k^{4}b_{1}}{4}P_{N}\left((\frac{u^{N}+u^{N}(\tau^{n,1})}{2})A_{1})\right)_{x}\Big|_{O(k)}\\
&&-\frac{k^{5}b_{1}}{4}P_{N}\left((\frac{u^{N}+u^{N}(\tau^{n,1})}{2})A_{2})\right)_{x}\\
&&-\frac{kb_{1}}{4}P_{N}\left(k^{6}A_{1}A_{1,x}+k^{7}(A_{1}A_{2})_{x}+k^{8}A_{2}A_{2,x}\right)-\frac{kb_{1}}{4}P_{N}\mathcal{A}(e^{n,1}),
\end{eqnarray*}
where $\rho_{1}, \rho_{2}$ satisfy (\ref{423}), (\ref{424}), respectively, and $\mathcal{A}(e^{n,1})$ is defined in (\ref{427}). The two terms denoted above as $\cdots \Big|_{O(k^{4})}, \cdots \Big|_{O(k)}$ will include Taylor remainders of the indicated order. We write the above equation as
\begin{eqnarray}
e^{n,1}+\frac{kb_{1}}{2}\partial_{x}^{3}e^{n,1}=\Gamma_{1}-\frac{kb_{1}}{4}P_{N}\mathcal{A}(e^{n,1}),\label{436}
\end{eqnarray}
where
\begin{eqnarray}
\Gamma_{1}&=&-\rho_{1}-kb_{1}\partial_{x}^{3}\rho_{2}-\frac{k^{5}b_{1}}{2}\partial_{x}^{3}A_{2}-kb_{1}P_{N}\left((\frac{u^{N}+u^{N}(\tau^{n,1})}{2})(\frac{u^{N}+u^{N}(\tau^{n,1})}{2})_{x}\right)\Big|_{O(k^{4})}\nonumber\\
&&-\frac{k^{4}b_{1}}{4}P_{N}\left((\frac{u^{N}+u^{N}(\tau^{n,1})}{2})A_{1})\right)_{x}\Big|_{O(k)}-\frac{k^{5}b_{1}}{4}P_{N}\left((\frac{u^{N}+u^{N}(\tau^{n,1})}{2})A_{2})\right)_{x}\nonumber\\
&&-\frac{kb_{1}}{4}P_{N}\left(k^{6}A_{1}A_{1,x}+k^{7}(A_{1}A_{2})_{x}+k^{8}A_{2}A_{2,x}\right).\label{437}
\end{eqnarray}
We shall prove below that for $\mu$ sufficiently large, there is a constant $C$, independent of $N$ and $k$, such that
\begin{eqnarray}
||\Gamma_{1}||\leq Ck^{5}.\label{438}
\end{eqnarray}
Assuming for the moment the validity of (\ref{438}), and taking inner products of both sides of (\ref{436}) with $e^{n,1}\in S_{N}$, we have, using integration by parts, periodicity, and (\ref{427}) that
\begin{eqnarray*}
||e^{n,1}||^{2}&=&(\Gamma_{1},e^{n,1})-\frac{kb_{1}}{8}((u^{N}+u^{N}(\tau^{n,1}))_{x}e^{n,1},e^{n,1})\\
&&-\frac{k^{4}b_{1}}{8}(A_{1,x}e^{n,1},e^{n,1})
-\frac{k^{5}b_{1}}{8}(A_{2,x}e^{n,1},e^{n,1}).
\end{eqnarray*}
Therefore
\begin{eqnarray}
||e^{n,1}||^{2}&\leq & ||\Gamma_{1}||||e^{n,1}||+Ck|(u^{N}+u^{N}(\tau^{n,1}))_{x}|_{\infty}||e^{n,1}||^{2}\nonumber\\
&&+Ck^{4}|A_{1,x}|_{\infty}||e^{n,1}||^{2}+Ck^{5}|A_{2,x}|_{\infty}||e^{n,1}||^{2}.\label{439}
\end{eqnarray}
Using Proposition \ref{propo32}, the fact that $|P_{N}v|_{\infty}\leq C||v||_{1}$, which follows from (\ref{dd22}) and Sobolev's theorem, and (\ref{431}), (\ref{435}), we see that for $0\leq t\leq T$,
\begin{eqnarray}
|(u^{N}+u^{N}(\tau^{n,1}))_{x}|_{\infty}&\leq & C\max_{t}||u^{N}||_{2}\leq C,\;{\rm for}\; \mu\geq 3,\label{440}\\
|A_{1,x}|_{\infty}&\leq & C, \;{\rm for}\; \mu\geq 12,\label{441}\\
|A_{2,x}|_{\infty}&\leq & C, \;{\rm for}\; \mu\geq 15.\label{442}
\end{eqnarray}
Therefore, using (\ref{430})-(\ref{442}) we see for $k$ sufficiently small that
\begin{eqnarray}
||e^{n,1}||\leq Ck^{5}, \;{\rm for}\; \mu\geq 15.\label{443}
\end{eqnarray}
We now prove (\ref{438}). We have
\begin{eqnarray*}
||\Gamma_{1}||&\leq & ||\rho_{1}||+Ck||\partial_{x}^{3}\rho_{2}||+Ck^{5}||\partial_{x}^{3}A_{2}||\\
&&+Ck^{5}|(u^{N}+u^{N}(\tau^{n,1}))_{x}|_{\infty}\max_{t}||\partial_{t}^{4}u^{N}||\\
&&+Ck^{5}||A_{1}||_{1,\infty}\max_{t}||\partial_{t}u^{N}||_{1}+Ck^{5}\max_{t}||u^{N}||_{1}||A_{2}||_{1,\infty}\\
&&+Ck^{5}\left(||A_{1}||_{1,\infty}^{2}+||A_{2}||_{1,\infty}^{2}\right).
\end{eqnarray*}
Therefore, by (\ref{423}), (\ref{424}), (\ref{431}), (\ref{435}), Proposition \ref{propo32} and estimates like (\ref{440})-(\ref{442}) we obtain
\begin{eqnarray}
||\Gamma_{1}||\leq Ck^{5}, \;{\rm for}\; \mu\geq 16,\label{444}
\end{eqnarray}
We note, for future use, that (\ref{436}) gives, in view of (\ref{427}), (\ref{440})-(\ref{444}) and (\ref{dd23}), for $\mu\geq 16$
\begin{eqnarray*}
k||\partial_{x}^{3}e^{n,1}||&\leq &C||\Gamma_{1}||+Ck||\mathcal{A}(e^{n,1})||+C||e^{n,1}||\\
&\leq & Ck^{5}+Ck||e^{n,1}||_{1}+Ck|e^{n,1}|_{\infty}||e_{x}^{n,1}||\\
&\leq &Ck^{5}+Ck^{5}(kN)+Ck^{5}(k^{6}N^{3/2}).
\end{eqnarray*}
Therefore, provided $k=O(N^{-1})$, we have
\begin{eqnarray}
k||\partial_{x}^{3}e^{n,1}||\leq Ck^{5}, \;{\rm if}\; \mu\geq 16, k=O(N^{-1}).\label{445}
\end{eqnarray}
For the estimates (\ref{443}) and (\ref{445}) we tracked, as an example, lower bounds of $\mu$ so that the constants involved are bounded. In the sequel we will just assume that $\mu$ is \lq sufficiently large\rq. Sufficient lower bounds of $\mu$ can always be retrieved if needed.

\medskip
\noindent{(iii)}
\underline{Asymptotic expansion of $V^{n,2}$. Determination of the coefficients $B_{1},B_{2}$}.

Using the Ansatz (\ref{419}) we now evaluate the $O(1)$ quantities $B_{1}$ and $B_{2}$. From (\ref{415}) for $i=2$ it follows that
\begin{eqnarray}
V^{n,2}&=&V^{n,1}-kb_{2}\left(\partial_{x}^{3}\left(\frac{V^{n,1}+V^{n,2}}{2}\right)\right)\nonumber\\
&&-\frac{kb_{2}}{4}P_{N}\left((V^{n,1}+V^{n,2})(V^{n,1}+V^{n,2})_{x}\right).\label{446}
\end{eqnarray}
We insert the expression (\ref{419}) in (\ref{446}); in the left-hand side, we obtain
\begin{eqnarray}
V^{n,2}
&=&u^{N}+k(b_{1}+b_{2})u_{t}^{N}+\frac{k^{2}(b_{1}+b_{2})^{2}}{2}u_{tt}^{N}+\frac{k^{3}(b_{1}+b_{2})^{3}}{6}u_{ttt}^{N}+\frac{k^{4}(b_{1}+b_{2})^{4}}{24}\partial_{t}^{4}u^{N}\nonumber\\
&&+\rho_{3}+B_{1}k^{3}+B_{2}k^{4}+e^{n,2},\label{447}
\end{eqnarray}
where the Taylor remainder $\rho_{3}$ satisfies
\begin{eqnarray}
||\rho_{3}||_{j}\leq Ck^{5}\max_{t}||\partial_{t}^{5}u^{N}||_{j}.\label{448}
\end{eqnarray}

Since in our case $b_{1}+b_{2}=1-b_{1}\cong -0.351$, in the first step of the fully discrete scheme $\tau^{0,2}=k(b_{1}+b_{2})$ will be negative. In addition, since $b_{1}>1$, for $n=M-1$ $\tau^{n,1}$ will exceed $T$.
Using the reversibility for $t<0$ of the KdV it is easy to see that $u^{N}(t)$ is defined for $t\in [-k,0]$ and satisfies the semidiscrete equations (\ref{dd24}) in $[-k,0]$. Obviously we may also extend the well-posedness of (\ref{dd12}) and the validity of (\ref{dd24}) up to $t=T+k$, as we have tacitly assumed in parts (i) and (ii) of the proof already. Hence the error estimate (\ref{dd31}) and the boundedness estimate (\ref{dd34}) are valid with the maximum taken over $[-k,T+k]$ now. In the sequel we will accordingly not specify the range of the subscripted $\max$ in formulas like (\ref{448}) as such estimates are obviously valid in the relevant intervals of $t$.

For the linear term in the right-hand side of (\ref{446}) we have, in view of (\ref{418}), (\ref{419}) and Taylor's theorem
\begin{eqnarray}
-kb_{2}\left(\partial_{x}^{3}\left(\frac{V^{n,1}+V^{n,2}}{2}\right)\right)
&=&-\frac{kb_{2}}{2}\partial_{x}^{3}\left(2u^{N}+k(2b_{1}+b_{2})u_{t}^{N}+\frac{k^{2}}{2}(b_{1}^{2}+(b_{1}+b_{2})^{2})u_{tt}^{N}\right.\nonumber\\
&&\left.+\frac{k^{3}}{6}(b_{1}^{3}+(b_{1}+b_{2})^{3})u_{ttt}^{N}+\rho_{4}+A_{1}k^{3}+A_{2}k^{4}\right.\nonumber\\
&&\left.+B_{1}k^{3}+B_{2}k^{4}+e^{n,1}+e^{n,2}\right),\label{449}
\end{eqnarray}
in which
\begin{eqnarray}
||\rho_{4}||_{j}\leq Ck^{4}\max_{t}||\partial_{t}^{4}u^{N}||_{j}.\label{450}
\end{eqnarray}
For the last term in the right-hand side of (\ref{446}) we have
\begin{eqnarray}
-\frac{kb_{2}}{4}P_{N}\left((V^{n,1}+V^{n,2})(V^{n,1}+V^{n,2})_{x}\right)
&=&-\frac{kb_{2}}{4}P_{N}\left((u^{N}(\tau^{n,1})+u^{N}(\tau^{n,2}))(u^{N}(\tau^{n,1})+u^{N}(\tau^{n,2}))_{x}\right.\nonumber\\
&&\left.+k^{3}\left((A_{1}+B_{1})(u^{N}(\tau^{n,1})+u^{N}(\tau^{n,2}))\right)_{x}\right.\nonumber\\
&&\left.+k^{4}\left((A_{2}+B_{2})(u^{N}(\tau^{n,1})+u^{N}(\tau^{n,2}))\right)_{x}\right.\nonumber\\
&&\left.+k^{6}(A_{1}+B_{1})(A_{1}+B_{1})_{x}+k^{7}\left((A_{1}+B_{1})(A_{2}+B_{2})\right)_{x}\right.\nonumber\\
&&\left.+k^{8}(A_{2}+B_{2})(A_{2}+B_{2})_{x}+\mathcal{B}(e^{n,1},e^{n,2})\right),\label{451}
\end{eqnarray}
where
\begin{eqnarray}
\mathcal{B}(e^{n,1},e^{n,2})&=&\left((u^{N}(\tau^{n,1})+u^{N}(\tau^{n,2}))(e^{n,1}+e^{n,2})\right)_{x}+k^{3}\left((A_{1}+B_{1})(e^{n,1}+e^{n,2})\right)_{x}\nonumber\\
&&+k^{4}\left((A_{2}+B_{2})(e^{n,1}+e^{n,2})\right)_{x}+
(e^{n,1}+e^{n,2})(e^{n,1}+e^{n,2})_{x}.\label{452}
\end{eqnarray}
From (\ref{446}), (\ref{447}), (\ref{422}), (\ref{449}), (\ref{451}) we have now
\begin{eqnarray}
u^{N}+k(b_{1}+b_{2})u_{t}^{N}+\frac{k^{2}(b_{1}+b_{2})^{2}}{2}u_{tt}^{N}&&
+\frac{k^{3}(b_{1}+b_{2})^{3}}{6}u_{ttt}^{N}+\frac{k^{4}(b_{1}+b_{2})^{4}}{24}\partial_{t}^{4}u^{N}\nonumber\\
+\rho_{3}+B_{1}k^{3}+B_{2}k^{4}+e^{n,2}&=&u^{N}+kb_{1}u_{t}^{N}+\frac{k^{2}b_{1}^{2}}{2}u_{tt}^{N}+\frac{k^{3}b_{1}^{3}}{6}u_{ttt}^{N}\nonumber\\
&&+\frac{k^{4}b_{1}^{4}}{24}\partial_{t}^{4}u^{N}+\rho_{1}+e^{n,1}+A_{1}k^{3}+A_{2}k^{4}\nonumber\\
&&-\frac{kb_{2}}{2}\partial_{x}^{3}\left(2u^{N}+k(2b_{1}+b_{2})u_{t}^{N}+\frac{k^{2}}{2}(b_{1}^{2}+(b_{1}+b_{2})^{2})u_{tt}^{N}\right.\nonumber\\
&&\left.+\frac{k^{3}}{6}(b_{1}^{3}+(b_{1}+b_{2})^{3})u_{ttt}^{N}+\rho_{4}+A_{1}k^{3}+A_{2}k^{4}\right.\nonumber\\
&&\left.+B_{1}k^{3}+B_{2}k^{4}+e^{n,1}+e^{n,2}\right)\nonumber\\
&&-\frac{kb_{2}}{4}P_{N}\left((u^{N}(\tau^{n,1})+u^{N}(\tau^{n,2}))(u^{N}(\tau^{n,1})+u^{N}(\tau^{n,2}))_{x}\right.\nonumber\\
&&\left.+k^{3}\left((A_{1}+B_{1})(u^{N}(\tau^{n,1})+u^{N}(\tau^{n,2}))\right)_{x}\right.\nonumber\\
&&\left.+k^{4}\left((A_{2}+B_{2})(u^{N}(\tau^{n,1})+u^{N}(\tau^{n,2}))\right)_{x}\right.\nonumber\\
&&\left.+k^{6}(A_{1}+B_{1})(A_{1}+B_{1})_{x}\right.\nonumber\\
&&\left.+k^{7}\left((A_{1}+B_{1})(A_{2}+B_{2})\right)_{x}\right.\nonumber\\
&&\left.+k^{8}(A_{2}+B_{2})(A_{2}+B_{2})_{x}\right.\nonumber\\
&&\left.+\mathcal{B}(e^{n,1},e^{n,2})\right),\label{453}
\end{eqnarray}
We now equate, as before, terms of the same power of $k$ in both sides of the above. (For this purpose we will need to expand some $u^{N}(\tau^{n,i})$ terms in the right hand-side  of (\ref{453}) in Taylor series about $t=t^{n}$.) It is straightforward to see that we get identities by equating the $O(1), O(k)$, and $O(k^{2})$ terms in both sides of (\ref{453}). For the identity of the $O(k)$ terms we have to use (\ref{dd36}) and for the one of the $O(k^{2})$ ones we need to differentiate both sides of (\ref{dd36}) with respect to $t$. These identities hold independently of the values of the $b_{i}$ as expected.

%
%
%
%

\medskip
{\it $O(k^{3})$ terms}:

From (\ref{453}) we get
\begin{eqnarray}
\frac{k^{3}}{6}(b_{1}+b_{2})^{3}u_{ttt}^{N}+k^{3}B_{1}&=&\frac{k^{3}}{6}b_{1}^{3}u_{ttt}^{N}+k^{3}A_{1}-\frac{k^{3}}{4}b_{2}(b_{1}^{2}+(b_{1}+b_{2})^{2})\partial_{x}^{3}u_{tt}^{N}\nonumber\\
&&-\frac{kb_{2}}{4}\Delta_{2},\label{454}
\end{eqnarray}
where
\begin{eqnarray*}
\Delta_{2}&=&P_{N}\left((u^{N}(\tau^{n,1})+u^{N}(\tau^{n,2}))(u^{N}(\tau^{n,1})+u^{N}(\tau^{n,2}))_{x}\Big|_{O(k^{2})}\right)\\
&=&k^{2}P_{N}\left((b_{1}^{2}+(b_{1}+b_{2})^{2})(u^{N}u_{ttx}^{N}+u_{x}^{N}u_{tt}^{N})+(2b_{1}+b_{2})^{2}u_{t}^{N}u_{tx}^{N}\right).
\end{eqnarray*}
Therefore, from (\ref{454})
\begin{eqnarray}
B_{1}&=&\frac{1}{6}(b_{1}^{3}-(b_{1}+b_{2})^{3})u_{ttt}^{N}+A_{1}-\frac{1}{4}b_{2}(b_{1}^{2}+(b_{1}+b_{2})^{2})\partial_{x}^{3}u_{tt}^{N}\nonumber\\
&&-\frac{1}{4}b_{2}P_{N}\left((b_{1}^{2}+(b_{1}+b_{2})^{2})(u^{N}u_{ttx}^{N}+u_{x}^{N}u_{tt}^{N}+2u_{t}^{N}u_{tx}^{N})\right.\nonumber\\
&&\left.-b_{2}^{2}u_{t}^{N}u_{tx}^{N}\right).\label{455}
\end{eqnarray}
Using Leibniz's rule for differentiation of products we have from (\ref{dd36})
\begin{eqnarray*}
u_{ttt}^{N}=-\partial_{x}^{3}u_{tt}^{N}-P_{N}(u_{tt}^{N}u_{x}^{N}+2u_{t}^{N}u_{xt}^{N}+u^{N}u_{xtt}^{N}).
\end{eqnarray*}
Therefore in (\ref{455}), using the facts that $b_{1}=b_{3}, b_{1}^{3}+b_{2}^{3}+b_{3}^{3}=0$, from which $b_{2}^{3}=-2b_{1}^{3}$, we see after some algebra and using (\ref{431}) that
\begin{eqnarray}
B_{1}=-A_{1}.\label{456}
\end{eqnarray}

{\it $O(k^{4})$ terms}:

From (\ref{453}), equating $O(k^{4})$ terms, and using appropriate Taylor expansions, and the fact that $A_{1}+B_{1}=0$, we obtain
\begin{eqnarray}
B_{2}=\frac{1}{24}(b_{1}^{4}-(b_{1}+b_{2})^{4})\partial_{t}^{4}u^{N}+A_{2}-\frac{b_{2}}{12}(b_{1}^{3}+(b_{1}+b_{2})^{3})\partial_{x}^{3}u_{ttt}^{N}-\frac{b_{2}}{4}\Delta_{3},\label{457}
\end{eqnarray}
where
\begin{eqnarray}
\Delta_{3}
&=&P_{N}\left(\frac{1}{3}(b_{1}^{3}+(b_{1}+b_{2})^{3})(u^{N}u_{tttx}^{N}+u_{ttt}^{N}u_{x}^{N})\right.\nonumber\\
&&\left.+\frac{1}{2}(b_{1}^{3}+b_{1}^{2}(b_{1}+b_{2})+b_{1}(b_{1}+b_{2})^{2}\right.\nonumber\\
&&\left.+(b_{1}+b_{2})^{3})(u_{}^{N}u_{ttx}^{N}+u_{tt}^{N}u_{tx}^{N})\right).\label{458}
\end{eqnarray}
Note that by differentiating (\ref{dd36}) three times with respect to $t$
and using Leibniz's rule for differentiation of the $u^{N}u_{x}^{N}$ term, we have
\begin{eqnarray}
-\partial_{x}^{3}\partial_{t}^{3}u^{N}=\partial_{t}^{4}u^{N}+P_{N}(u^{N}u_{xttt}^{N}+3u_{t}^{N}u_{xtt}^{N}+3u_{tt}^{N}u_{xt}^{N}+u_{ttt}^{N}u_{x}^{N}).\label{459}
\end{eqnarray}
To simplify somewhat (\ref{457}), using $b_{1}+b_{2}+b_{3}=1$ and $b_{1}=b_{3}$, i.~e. $2b_{1}+b_{2}=1$, we get
\begin{eqnarray}
\frac{b_{2}}{12}(b_{1}^{3}+(b_{1}+b_{2})^{3})=\frac{b_{2}}{12}(b_{1}^{2}+b_{1}b_{2}+b_{2}^{2}).\label{460}
\end{eqnarray}
From (\ref{460}), and (\ref{458}), and replacing in (\ref{457}) the term $-\partial_{x}^{3}u_{ttt}^{N}$ by the formula (\ref{459}) we see, 
after a number of algebraic computations using the facts that $2b_{1}+b_{2}=1, 2b_{1}^{3}+b_{2}^{3}=0$ that 
\begin{eqnarray}
B_{2}=A_{2}-\frac{b_{1}^{3}}{12}\partial_{t}^{4}u^{N}+\frac{b_{2}^{3}}{8}P_{N}\left(u_{t}^{N}u_{xtt}^{N}+u_{tt}^{N}u_{xt}^{N}\right).\label{461}
\end{eqnarray}

\medskip
\noindent{(iv)}
\underline{Estimation of $e^{n,2}$}.

Having determined the $O(1)$ quantities $B_{1}, B_{2}\in S_{N}$, we equate now the $O(k^{5})$ and higher-order terms in (\ref{453}) in order to find an equation for the residual $e^{n,2}$. This yields (if we use the fact that $A_{1}+B_{1}=0$)
\begin{eqnarray*}
\rho_{3}+e^{n,2}&=&\rho_{1}+e^{n,1}-\frac{kb_{2}}{2}\partial_{x}^{3}\left(\rho_{4}+(A_{2}+B_{2})k^{4}+e^{n,1}+e^{n,2}\right)\\
&&-\frac{kb_{2}}{4}P_{N}\left((u^{N}(\tau^{n,1})+u^{N}(\tau^{n,2}))(u^{N}(\tau^{n,1})+u^{N}(\tau^{n,2}))_{x}\Big|_{O(k^{4})}\right.\\
&&\left.+k^{4}\left((A_{2}+B_{2})(u^{N}(\tau^{n,1})+u^{N}(\tau^{n,2}))\Big|_{O(k)}\right)_{x}+k^{8}(A_{2}+B_{2})(A_{2}+B_{2})_{x}\right)\\
&&-\frac{kb_{2}}{4}P_{N}(\mathcal{B}(e^{n,1},e^{n,2})).
\end{eqnarray*}
We recall that $\rho_{1}, \rho_{3}, \rho_{4}$ satisfy (\ref{423}), (\ref{448}), (\ref{450}), respectively, and $\mathcal{B}(e^{n,1},e^{n,2})$ is given by (\ref{452}). The terms denoted as $\cdots\Big|_{O(k^{\alpha})}$ will include Taylor remainders of the indicated order. We simplify the above equation to
\begin{eqnarray}
e^{n,2}+\frac{kb_{2}}{2}\partial_{x}^{3}e^{n,2}=\Gamma_{2}+e^{n,1}-\frac{kb_{2}}{2}\partial_{x}^{3}e^{n,1}-\frac{kb_{2}}{4}P_{N}(\mathcal{B}(e^{n,1},e^{n,2})),\label{462}
\end{eqnarray}
where
\begin{eqnarray}
\Gamma_{2}&=&\rho_{1}-\rho_{3}-\frac{kb_{2}}{4}\partial_{x}^{3}\rho_{4}-\frac{k^{5}b_{2}}{2}\partial_{x}^{3}(A_{2}+B_{2})\nonumber\\
&&-\frac{kb_{2}}{4}P_{N}\left((u^{N}(\tau^{n,1})+u^{N}(\tau^{n,2}))(u^{N}(\tau^{n,1})+u^{N}(\tau^{n,2}))_{x}\Big|_{O(k^{4})}\right.\nonumber\\
&&\left.+k^{4}\left((A_{2}+B_{2})(u^{N}(\tau^{n,1})+u^{N}(\tau^{n,2}))\Big|_{O(k)}\right)_{x}\right.\nonumber\\
&&\left.+k^{8}(A_{2}+B_{2})(A_{2}+B_{2})_{x}\right).\label{463}
\end{eqnarray}
We shall prove below that for $\mu$ sufficiently large there is a constant $C$, independent of $N$ and $k$, such that
\begin{eqnarray}
||\Gamma_{2}||\leq Ck^{5}.\label{464}
\end{eqnarray}
Assuming for the time being the validity of (\ref{464}) and taking inner products in (\ref{462}) with $e^{n,2}$, we have, using (\ref{452}), periodicity, and the fact that $A_{1}+B_{1}=0$, that
\begin{eqnarray}
||e^{n,2}||^{2}&=&(\Gamma_{2},e^{n,2})+(e^{n,1},e^{n,2})-\frac{kb_{2}}{2}(\partial_{x}^{3}e^{n,1},e^{n,2})\nonumber\\
&&-\frac{kb_{2}}{4}\underbrace{\left(((u^{N}(\tau^{n,1})+u^{N}(\tau^{n,2}))(e^{n,1}+e^{n,2}))_{x},e^{n,2}\right)}_{I}\nonumber\\
&&-\frac{k^{5}b_{2}}{4}\underbrace{\left(((A_{2}+B_{2})(e^{n,1}+e^{n,2}))_{x},e^{n,2}\right)}_{II}\nonumber\\
&&-\frac{kb_{2}}{4}\underbrace{\left((e^{n,1}+e^{n,2})(e^{n,1}+e^{n,2})_{x},e^{n,2}\right)}_{III}.\label{465}
\end{eqnarray}
For the term I above, for $\mu$ sufficiently large, using integration by parts, and taking into account (\ref{dd34}), (\ref{dd23}), (\ref{443}), and the fact that $kN=O(1)$, we obtain

\begin{eqnarray}
|I|
&\leq &Ck||u^{N}(\tau^{n,1})+u^{N}(\tau^{n,2})||_{1,\infty}||e^{n,1}||||e^{n,2}||+Ck||u^{N}(\tau^{n,1})+u^{N}(\tau^{n,2})||_{1,\infty}||e^{n,2}||^{2}\nonumber\\
&&+Ck|u^{N}(\tau^{n,1})+u^{N}(\tau^{n,2})|_{\infty}||e_{x}^{n,1}||||e^{n,2}||\nonumber
\\
&\leq & Ck||e^{n,1}||||e^{n,2}||+Ck||e^{n,2}||^{2}+Ck||e_{x}^{n,1}||||e^{n,2}||\nonumber\\
&\leq & Ck||e^{n,1}||||e^{n,2}||+CkN||e^{n,1}||||e^{n,2}||+Ck||e^{n,2}||^{2}\nonumber
\\
&\leq & Ck^{5}||e^{n,2}||+Ck||e^{n,2}||^{2}.\label{466}
\end{eqnarray}
To estimate II, note that by (\ref{435}), (\ref{442}), (\ref{461}), and (\ref{dd34}), for $\mu$ sufficiently large, we obtain $||A_{2}+B_{2}||_{1,\infty}\leq C$. Hence, using integration by parts, (\ref{443}), (\ref{dd23}), and taking $\mu$ sufficiently large, and using the fact that $kN=O(1)$, we have
\begin{eqnarray}
|II|
&\leq & Ck^{5}|(A_{2}+B_{2})_{x}|_{\infty}\left(||e^{n,1}||||e^{n,2}||+||e^{n,2}||^{2}\right)\nonumber\\
&&+Ck^{5}|(A_{2}+B_{2})|_{\infty}||e_{x}^{n,1}||||e^{n,2}||
\nonumber\\
&\leq &
Ck^{9}||e^{n,2}||+Ck^{5}||e^{n,2}||^{2}.\label{467}
\end{eqnarray}
For the term III of (\ref{465}), using integration by parts, (\ref{443}), (\ref{dd23}), and $kN=O(1)$, we see that
\begin{eqnarray}
|III|&\leq &  Ck|e^{n,1}|_{\infty}||e_{x}^{n,1}||||e^{n,2}||+Ck|e^{n,1}|_{\infty}||e^{n,2}||^{2}\nonumber
\\
&\leq & Ck^{11}N^{3/2}||e^{n,2}||+Ck^{6}N^{3/2}||e^{n,2}||^{2}\nonumber
\\
&\leq &
Ck^{9.5}||e^{n,2}||+Ck^{4.5}||e^{n,2}||^{2}.\label{468}
\end{eqnarray}
From (\ref{466})-(\ref{468}) we conclude, for $kN=O(1)$, $\mu$ sufficiently large, that
\begin{eqnarray}
|I+II+III|\leq Ck^{5}||e^{n,2}||+Ck||e^{n,2}||^{2}.\label{469}
\end{eqnarray}
Hence, by (\ref{465}) and (\ref{469}) we have,  for $kN=O(1)$, $\mu$ sufficiently large
\begin{eqnarray*}
||e^{n,2}||^{2}&\leq & ||\Gamma_{2}||||e^{n,2}||+||e^{n,1}||||e^{n,2}||+Ck||\partial_{x}^{3}e^{n,1}||||e^{n,2}||\\
&&+ Ck^{5}||e^{n,2}||+Ck||e^{n,2}||^{2}.
\end{eqnarray*}
From (\ref{443}), (\ref{445}), (\ref{464}), and the above, we conclude therefore, for $kN=O(1)$, $\mu$ sufficiently large and $k$ sufficiently small, that
\begin{eqnarray}
||e^{n,2}||\leq Ck^{5}.\label{470}
\end{eqnarray}
As done for $e^{n,1}$, it turns out that we will need in the sequel an optimal-order estimate for $k||\partial_{x}^{3}e^{n,2}||$ under no prohibitive stability assumptions. For this purpose, note that (\ref{462}) yields
\begin{eqnarray}
k||\partial_{x}^{3}e^{n,2}||\leq  C\left(||\Gamma_{2}||+||e^{n,1}||+||e^{n,2}||+k||\partial_{x}^{3}e^{n,1}||+k||\mathcal{B}(e^{n,1},e^{n,2})||\right).\label{471}
\end{eqnarray}
Now, for $\mu$ sufficiently large, we have from (\ref{452}), using similar estimates as before,
\begin{eqnarray*}
k||\mathcal{B}(e^{n,1},e^{n,2})||&\leq  &Ck||e^{n,1}+e^{n,2}||_{1}+Ck^{4}||e^{n,1}+e^{n,2}||_{1}\\
&&+Ck|e^{n,1}+e^{n,2}|_{\infty}||e^{n,1}+e^{n,2}||_{1}.
\end{eqnarray*}
Hence using (\ref{443}), (\ref{470}), (\ref{dd23}), and $kN=O(1)$ we see that
\begin{eqnarray}
k||\mathcal{B}(e^{n,1},e^{n,2})||\leq Ck^{5}.\label{472} 
\end{eqnarray}
It follows from (\ref{471}), (\ref{464}), (\ref{443}), (\ref{470}), (\ref{445}), (\ref{472}), that
\begin{eqnarray}
k||\partial_{x}^{3}e^{n,2}||\leq Ck^{5}.\label{473} 
\end{eqnarray}
We finally establish (\ref{464}). Using Taylor expansions to the required order, we have for $\mu$ sufficiently large
\begin{eqnarray*}
||\Gamma_{2}||&\leq & ||\rho_{1}||+||\rho_{3}||+Ck||\partial_{x}^{3}\rho_{4}||+Ck^{5}||\partial_{x}^{3}(A_{2}+B_{2})||\\
&&+Ck(Ck^{4}\max_{t}||\partial_{t}^{4}u^{N}||)+Ck\left(||A_{2}+B_{2}||_{1,\infty}\cdot Ck\max_{t}||u_{t}^{N}||_{1}\right)\\
&&+Ck^{9}|A_{2}+B_{2}|_{\infty}||(A_{2}+B_{2})_{x}||\leq Ck^{5},
\end{eqnarray*}
where we used (\ref{423}), (\ref{448}), (\ref{450}), (\ref{435}), (\ref{461}) in conjunction with (\ref{dd34}). Therefore (\ref{464}) holds.

\medskip
\noindent{(v)}
\underline{Final consistency step: Verify that (\ref{420}) holds with $e^{n,3}$ satisfying $||e^{n,3}||\leq Ck^{5}$}.

In this final step we let $e^{n,3}\in S_{N}$ be defined by (\ref{420}), find a suitable equation for $e^{n,3}$ (as we did for $e^{n,1}$ and $e^{n,2}$) and prove that $||e^{n,3}||\leq Ck^{5}$. For this purpose
we substitute (\ref{420}) in the equation for $V^{n,3}$ in (\ref{415}) and prove that $||e^{n,3}||\leq Ck^{5}$, using the expansion (\ref{419}) for $V^{n,2}$ and the estimates that we have for $B_{i}, e^{n,2}$.

Substituting $V^{n,3}$ from (\ref{420})  in (\ref{415}) and using (\ref{419}) we get
\begin{eqnarray}
u^{N}(t^{n+1})+e^{n,3}&=&u^{N}(\tau^{n,2})+B_{1}k^{3}+B_{2}k^{4}+e^{n,2}-\frac{kb_{3}}{2}\partial_{x}^{3}(u^{N}(\tau^{n,2})+u^{N}(t^{n+1}))\nonumber\\
&&-\frac{kb_{3}}{2}\partial_{x}^{3}e^{n,3}-\frac{kb_{3}}{2}\partial_{x}^{3}(B_{1}k^{3}+B_{2}k^{4}+e^{n,2})\nonumber\\
&&-\frac{kb_{3}}{4}P_{N}\left((u^{N}(\tau^{n,2})+u^{N}(t^{n+1}))(u^{N}(\tau^{n,2})+u^{N}(t^{n+1}))_{x}\right.\nonumber\\
&&\left. +k^{3}\left((u^{N}(\tau^{n,2})+u^{N}(t^{n+1}))B_{1}\right)_{x}+k^{4}\left((u^{N}(\tau^{n,2})+u^{N}(t^{n+1}))B_{2}\right)_{x}\right.\nonumber\\
&&\left.+k^{6}B_{1}B_{1x}+k^{7}(B_{1}B_{2})_{x}+k^{8}(B_{2}B_{2x})+\mathcal{E}(e^{n,2},e^{n,3})\right),\label{474}
\end{eqnarray}
where
\begin{eqnarray}
\mathcal{E}(e^{n,2},e^{n,3})&=&\left((u^{N}(\tau^{n,2})+u^{N}(t^{n+1}))(e^{n,2}+e^{n,3})\right)_{x}+k^{3}\left(B_{1}(e^{n,2}+e^{n,3})\right)_{x}\nonumber\\
&&+k^{4}\left(B_{2}(e^{n,2}+e^{n,3})\right)_{x}+(e^{n,2}+e^{n,3})(e^{n,2}+e^{n,3})_{x}.\label{475}
\end{eqnarray}
Using now Taylor expansions in the linear terms of (\ref{474}) we have
\begin{eqnarray}
&&u^{N}+ku_{t}^{N}+\frac{k^{2}}{2}u_{tt}^{N}+\frac{k^{3}}{6}u_{ttt}^{N}+\frac{k^{4}}{24}\partial_{t}^{4}u^{N}+\rho_{5}+e^{n,3}\nonumber\\
&=&u^{N}+k(b_{1}+b_{2})u_{t}^{N}+\frac{k^{2}}{2}(b_{1}+b_{2})^{2}u_{tt}^{N}+\frac{k^{3}}{6}(b_{1}+b_{2})^{3}u_{ttt}^{N}+\frac{k^{4}}{24}(b_{1}+b_{2})^{4}\partial_{t}^{4}u^{N}+\rho_{6}\nonumber\\
&&+B_{1}k^{3}+B_{2}k^{4}+e^{n,2}-\frac{kb_{3}}{2}\partial_{x}^{3}\left(2u^{N}+k(b_{1}+b_{2}+1)u_{t}^{N}+\frac{k^{2}}{2}((b_{1}+b_{2})^{2}+1)u_{tt}^{N}\right.\nonumber\\
&&\left. +\frac{k^{3}}{6}((b_{1}+b_{2})^{3}+1)u_{ttt}^{N}+\rho_{7}\right)-\frac{kb_{3}}{2}\partial_{x}^{3}e^{n,3}-\frac{kb_{3}}{2}\partial_{x}^{3}(B_{1}k^{3}+B_{2}k^{4}+e^{n,2})\nonumber\\
&&-\frac{kb_{3}}{4}P_{N}\left((u^{N}(\tau^{n,2})+u^{N}(t^{n+1}))(u^{N}(\tau^{n,2})+u^{N}(t^{n+1}))_{x}\right.\nonumber\\
&&\left. +k^{3}\left((u^{N}(\tau^{n,2})+u^{N}(t^{n+1}))B_{1}\right)_{x}+k^{4}\left((u^{N}(\tau^{n,2})+u^{N}(t^{n+1}))B_{2}\right)_{x}\right.\nonumber\\
&&\left.+k^{6}B_{1}B_{1x}+k^{7}(B_{1}B_{2})_{x}+k^{8}(B_{2}B_{2x})+\mathcal{E}(e^{n,2},e^{n,3})\right).\label{476}
\end{eqnarray}
Here the residuals $\rho_{5}, \rho_{6}, \rho_{7}\in S_{N}$ satisfy
\begin{eqnarray}
||\rho_{5}||+||\rho_{6}|| \leq Ck^{5}\max_{t}||\partial_{t}^{5}u^{N}||,\;\;
||\rho_{7}||_{j}\leq Ck^{4}\max_{t}||\partial_{t}^{4}u^{N}||_{j}.\label{477}
\end{eqnarray}
We equate now equal-power terms in (\ref{476}). It is evident that the $O(1)$ terms give an identity. It is also straightforward to see that we get identities for the $O(k)$ and $O(k^{2})$ terms, using the facts that $b_{1}+b_{2}+b_{3}=1, b_{1}=b_{3}$, and (\ref{dd36}) and its temporal derivative, respectively.

{\it $O(k^{3})$ terms}:

From (\ref{476}), using (\ref{456}), (\ref{431}), and a Taylor expansion up to $O(k^{2})$ terms in the first nonlinear term in the right-hand side of (\ref{476}), we see that we have to check whether
\begin{eqnarray}
\frac{1}{6}u_{ttt}^{N}&=&\frac{1}{6}(b_{1}+b_{2})^{3}u_{ttt}^{N}-\frac{b_{1}^{3}}{12}u_{ttt}^{N}-\frac{b_{1}^{3}}{4}P_{N}(u_{t}^{N}u_{xt}^{N})-\frac{b_{3}}{4}((b_{1}+b_{2})^{2}+1)\partial_{x}^{3}u_{tt}^{N}\nonumber\\
&&-\frac{b_{3}}{4}P_{N}\left((b_{1}+b_{2})^{2}+1)(u^{N}u_{xtt}^{N}+u_{tt}^{N}u_{x}^{N})+(b_{1}+b_{2}+1)^{2}u_{t}^{N}u_{tx}^{N}\right).\label{478}
\end{eqnarray}
Differentiating (\ref{dd36}) twice with respect to $t$ and using Leibniz's rule we get

\begin{eqnarray*}
-\partial_{x}^{3}u_{tt}^{N}&=&\partial_{t}^{3}u^{N}+P_{N}\left(u^{N}u^{N}_{xtt}+2u_{t}^{N}u_{xt}^{N}+u_{tt}^{N}u_{x}^{N}\right).
\end{eqnarray*}
If we insert this expression for $-\partial_{x}^{3}u_{tt}^{N}$ in the fourth term in the right-hand side of (\ref{478}) and use the facts that $b_{1}=b_{3}, b_{1}+b_{2}+b_{3}=1$, we may see after some algebra that (\ref{478}) holds.

{\it $O(k^{4})$ terms}:

From (\ref{461}), (\ref{435}), (\ref{456}), and Taylor expansions in the first and second nonlinear terms in the right-hand side of (\ref{476}) we see that we must verify whether
\begin{eqnarray}
\frac{1}{24}\partial_{t}^{4}u^{N}&=&\frac{1}{24}(b_{1}+b_{2})^{4}\partial_{t}^{4}u^{N}-\frac{1}{12}b_{1}^{3}\partial_{t}^{4}u^{N}+\frac{b_{1}^{4}}{12}\partial_{t}^{4}u^{N}\nonumber\\
&&+\frac{b_{1}^{4}}{8}\left(-P_{N}\partial_{x}^{3}(u_{t}^{N}u_{xt}^{N})+2P_{N}(u_{t}^{N}u_{xtt}^{N}+u_{tt}^{N}u_{xt}^{N})-P_{N}\left(u^{N}P_{N}(u_{t}^{N}u_{xt}^{N}))_{x}\right)\right)\nonumber\\
&&+\frac{b_{2}^{3}}{8}P_{N}(u_{t}^{N}u_{ttx}^{N}+u_{tt}^{N}u_{tx}^{N})-\frac{b_{3}}{12}((b_{1}+b_{2})^{3}+1)\partial_{x}^{3}u_{ttt}^{N}\nonumber\\
&&+\frac{b_{3}b_{1}^{3}}{2}\left(\frac{1}{12}\partial_{x}^{3}\partial_{t}^{3}u^{N}+\frac{1}{4}\partial_{x}^{3}P_{N}(u_{t}^{N}u_{xt}^{N})\right)\nonumber\\
&&-\frac{b_{3}}{4}P_{N}\left(\frac{1}{3}((b_{1}+b_{2})^{3}+1)(u^{N}u_{xttt}^{N}+u_{ttt}^{N}u_{x}^{N})\right.\nonumber\\
&&\left. +\frac{1}{2}\left(1+(b_{1}+b_{2})+(b_{1}+b_{2})^{2}+(b_{1}+b_{2})^{3}\right)(u_{t}^{N}u_{xtt}^{N}+u_{tt}^{N}u_{xt}^{N})\right)\nonumber\\
&&+\frac{b_{3}}{2}P_{N}\left(\frac{b_{1}^{3}}{12}(u^{N}u_{ttt}^{N})_{x}+\frac{b_{1}^{3}}{4}\left(u^{N}P_{N}(u_{t}^{N}u_{xt}^{N})\right)_{x}\right).\label{480}
\end{eqnarray}
Let $L$ be the sum of the linear terms and $N_{1}$ the sum of the nonlinear terms in the right-hand side of (\ref{480}). Then
\begin{eqnarray}
L=\gamma_{1}\partial_{t}^{4}u^{N}+\gamma_{2}\partial_{x}^{3}u_{ttt}^{N},\label{481}
\end{eqnarray}
where
\begin{eqnarray}
\gamma_{1}=\frac{1}{24}(b_{1}+b_{2})^{4}-\frac{b_{1}^{3}}{12}+\frac{b_{1}^{4}}{12},\; \gamma_{2}=-\frac{b_{3}}{12}((b_{1}+b_{2})^{3}+1)+\frac{b_{1}^{3}b_{3}}{24}.\label{482}
\end{eqnarray}
Differentiating now (\ref{dd36}) three times with respect to $t$ we obtain
$$-\partial_{x}^{3}\partial_{t}^{3}u^{N}=\partial_{t}^{4}u^{N}+P_{N}(u^{N}u_{x}^{N})_{ttt}.$$ Therefore (\ref{481}) becomes
\begin{eqnarray}
L=(\gamma_{1}-\gamma_{2})\partial_{t}^{4}u^{N}-\gamma_{2}P_{N}(u^{N}u_{x}^{N})_{ttt},\label{483}
\end{eqnarray}
and we see that $L$ has now acquired a nonlinear term as a result of eliminating $\partial_{x}^{3}u_{ttt}^{N}$. Taking into account that $b_{1}=b_{3}, b_{2}=1-2b_{1}$ we see after some algebra that
\begin{eqnarray}
\gamma_{1}-\gamma_{2}=\frac{1}{24}.\label{484}
\end{eqnarray}
Using (\ref{484}) and (\ref{483}) we may check that the $\partial_{t}^{4}u^{N}$ terms in the two sides of (\ref{480}) match. Hence, in order to show that (\ref{480}) holds, in view of (\ref{483}) we have to check that
\begin{eqnarray}
N_{1}-\gamma_{2}P_{N}(u^{N}u_{x}^{N})_{ttt}=0,\label{485}
\end{eqnarray}
where we recall that $N_{1}$ is the sum of the original nonlinear terms in the right-hand side of (\ref{480}).
It holds that
\begin{eqnarray}
N_{1}-\gamma_{2}P_{N}(u^{N}u_{x}^{N})_{ttt}=P_{N}\mathcal{G},\label{486}
\end{eqnarray}
where 
\begin{eqnarray*}
\mathcal{G}&=&-\frac{b_{1}^{4}}{8}\partial_{x}^{3}(u_{t}^{N}u_{xt}^{N})+\frac{b_{1}^{4}}{4}(u_{t}^{N}u_{xtt}^{N}+u_{tt}^{N}u_{xt}^{N})-\frac{b_{1}^{4}}{8}\left(u^{N}P_{N}(u_{t}^{N}u_{xt}^{N})\right)_{x}\\
&&+\frac{b_{2}^{3}}{8}(u_{t}^{N}u_{xtt}^{N}+u_{tt}^{N}u_{xt}^{N})+\frac{b_{1}^{3}b_{3}}{8}\partial_{x}^{3}(u_{t}^{N}u_{xt}^{N})\\
&&-\frac{b_{3}}{12}((b_{1}+b_{2})^{3}+1)(u^{N}u_{xttt}^{N}+u_{ttt}^{N}u_{x}^{N})\\
&&-\frac{b_{3}}{8}\left(1+(b_{1}+b_{2})+(b_{1}+b_{2})^{2}+(b_{1}+b_{2})^{3}\right)(u_{t}^{N}u_{xtt}^{N}+u_{tt}^{N}u_{xt}^{N})\\
&&+\frac{b_{1}^{3}b_{3}}{24}(u^{N}u_{ttt}^{N})_{x}+\frac{b_{1}^{3}b_{3}}{8}\left(u^{N}P_{N}(u_{t}^{N}u_{xt}^{N})\right)_{x}-\gamma_{2}(u^{N}u_{x}^{N})_{ttt}.
\end{eqnarray*}
Since $b_{1}=b_{3}$ we see that the strange terms $\partial_{x}^{3}(u_{t}^{N}u_{xt}^{N})$ and $\left(u^{N}P_{N}(u_{t}^{N}u_{xt}^{N})\right)_{x}$ cancel, and we are left with (after some algebra and the application of Leibniz's rule)
\begin{eqnarray}
\mathcal{G}=\gamma_{3}(u^{N}u_{xttt}^{N}+u_{ttt}^{N}u_{x}^{N})+\gamma_{4}(u_{t}^{N}u_{xtt}^{N}+u_{tt}^{N}u_{xt}^{N}),\label{487}
\end{eqnarray}
where
\begin{eqnarray*}
\gamma_{3}&=&-\frac{b_{3}}{12}((b_{1}+b_{2})^{3}+1)+\frac{b_{1}^{3}b_{3}}{24}-\gamma_{2},\\
\gamma_{4}&=&\frac{b_{1}^{4}}{4}+\frac{b_{2}^{3}}{8}-\frac{b_{3}}{8}\left(1+(b_{1}+b_{2})+(b_{1}+b_{2})^{2}+(b_{1}+b_{2})^{3}\right)-3\gamma_{2}.
\end{eqnarray*}
From (\ref{482}) we see that
\begin{eqnarray}
\gamma_{3}=0.\label{488}
\end{eqnarray}
Finally, after some algebra and using the facts $b_{1}=b_{3}, b_{1}+b_{2}+b_{3}=1$ and that $x=b_{1}$ satisfies the cubic equation $x^{3}-2x^{2}+x-1/6=0$, we see that
\begin{eqnarray}
\gamma_{4}=0.\label{489}
\end{eqnarray}
Hence, (\ref{488}) and (\ref{489}) yield that $\mathcal{G}=0$ in (\ref{487}) and therefore, in view of (\ref{486}), (\ref{485}) holds. We conclude that (\ref{480}) is valid.

We now embark upon finding an equation for $e^{n,3}$ from the remaining terms in (\ref{476}). We recall that we have used Taylor expansions in the first two terms of the nonlinear $-\frac{kb_{3}}{4}P_{N}(\cdots)$ term in the right-hand side of (\ref{476}) and we have now to put in the residuals. In this way (\ref{476}) yields
\begin{eqnarray}
e^{n,3}&=&-\rho_{5}+\rho_{6}+e^{n,2}-\frac{kb_{3}}{2}\partial_{x}^{3}\rho_{7}-\frac{kb_{3}}{2}\partial_{x}^{3}e^{n,3}-\frac{k^{5}b_{3}}{2}\partial_{x}^{3}B_{2}\nonumber\\
&&-\frac{kb_{3}}{2}\partial_{x}^{3}e^{n,2}-\frac{kb_{3}}{4}P_{N}\left(\rho_{8}+k^{3}(\rho_{9}B_{1})_{x}+k^{4}\left((u^{N}(\tau^{n,2})+u^{N}(t^{n+1}))B_{2}\right)_{x}\right.\nonumber\\
&&\left.+k^{6}B_{1}B_{1x}+k^{7}(B_{1}B_{2})_{x}+k^{8}(B_{2}B_{2x})+\mathcal{E}(e^{n,2},e^{n,3})\right),\label{490}
\end{eqnarray}
where, using (\ref{dd34}), we see that
\begin{eqnarray}
||\rho_{8}||\leq Ck^{4}|u^{N}(\tau^{n,2})+u^{N}(t^{n+1})|_{\infty}\max_{t}||\partial_{t}^{4}u_{x}^{N}||\leq Ck^{4},\label{491}
\end{eqnarray}
for $\mu$ sufficiently large. Similarly
\begin{eqnarray}
||\rho_{9}||\leq Ck\max_{t}||\partial_{t}u^{N}||_{1}\leq Ck.\label{492}
\end{eqnarray}
Therefore, (\ref{490}) gives
\begin{eqnarray}
e^{n,3}+\frac{kb_{3}}{2}\partial_{x}^{3}e^{n,3}&=&\Gamma_{3}+e^{n,2}-\frac{kb_{3}}{2}\partial_{x}^{3}e^{n,2}-\frac{kb_{3}}{4}P_{N}\mathcal{E}(e^{n,2},e^{n,3}),\label{493}
\end{eqnarray}
where
\begin{eqnarray}
\Gamma_{3}&=&-\rho_{5}+\rho_{6}-\frac{kb_{3}}{2}\partial_{x}^{3}\rho_{7}-\frac{k^{5}b_{3}}{2}\partial_{x}^{3}B_{2}\nonumber\\
&&-\frac{kb_{3}}{4}P_{N}\left(\rho_{8}+k^{3}(\rho_{9}B_{1})_{x}+k^{4}\left((u^{N}(\tau^{n,2})+u^{N}(t^{n+1}))B_{2}\right)_{x}\right.\nonumber\\
&&\left.+k^{6}B_{1}B_{1x}+k^{7}(B_{1}B_{2})_{x}+k^{8}(B_{2}B_{2x})\right).\label{494}
\end{eqnarray}
We will prove below that for $\mu$ sufficiently large there is a constant $C$, independent of $k$ and $N$, such that
\begin{eqnarray}
||\Gamma_{3}||\leq C k^{5}.\label{495}
\end{eqnarray}
Assuming (\ref{495}) for the time being and taking $L^{2}$ inner products with $e^{n,3}\in S_{N}$ in (\ref{493}) we see, using integration by parts, that, in view of (\ref{475}),
\begin{eqnarray}
||e^{n,3}||^{2}
&=&(\Gamma_{3},e^{n,3})+(e^{n,2},e^{n,3})-\frac{kb_{3}}{2}(\partial_{x}^{3}e^{n,2},e^{n,3})\nonumber\\
&&-\underbrace{\frac{kb_{3}}{4}\left(\left((u^{N}(\tau^{n,2})+u^{N}(t^{n+1}))(e^{n,2}+e^{n,3})\right)_{x},e^{n,3}\right)}_{I}\nonumber\\
&&-\underbrace{\frac{k^{4}b_{3}}{4}\left(\left(B_{1}(e^{n,2}+e^{n,3})\right)_{x},e^{n,3}\right)}_{II}
-\underbrace{\frac{k^{5}b_{3}}{4}\left(\left(B_{2}(e^{n,2}+e^{n,3})\right)_{x},e^{n,3}\right)}_{III}\nonumber\\
&&-\underbrace{\frac{kb_{3}}{4}\left((e^{n,2}+e^{n,3})(e^{n,2}+e^{n,3})_{x},e^{n,3}\right)}_{IV}.\label{496}
\end{eqnarray}
We estimate first the terms I-IV in (\ref{496}).
By integration by parts, using (\ref{470}) and (\ref{dd23}), taking $\mu$ sufficiently large, and using our hypothesis that $kN=O(1)$, we obtain
\begin{eqnarray}
|I|
\leq   Ck^{5}||e^{n,3}||+Ck||e^{n,3}||^{2}.\label{497}
\end{eqnarray}
Now, by integrating by parts, using (\ref{456}), (\ref{441}), (\ref{470}), (\ref{dd23}), and taking $\mu$ sufficiently large, we see, in view of our hypothesis $kN=O(1)$, that
\begin{eqnarray}
|II|\leq  Ck^{8}||e^{n,3}||+Ck^{4}||e^{n,3}||^{2}.\label{498}
\end{eqnarray}
By integration by parts, (\ref{461}), (\ref{462}), (\ref{470}), (\ref{dd23}), and the fact that $kN=O(1)$, we have for $\mu$ sufficiently large,
\begin{eqnarray}
|III|\leq  Ck^{9}||e^{n,3}||+Ck^{5}||e^{n,3}||^{2}.\label{499}
\end{eqnarray}
Finally, using integration by parts, (\ref{470}), (\ref{dd23}), and the fact that $kN=O(1)$, we get 
\begin{eqnarray}
|IV|\leq Ck^{9}||e^{n,3}||+Ck^{4.5}||e^{n,3}||^{2}.\label{4100}
\end{eqnarray}
From (\ref{495})-(\ref{4100}) we conclude for $\mu$ sufficiently large and $k=O(N^{-1})$ that
\begin{eqnarray*}
||e^{n,3}||^{2}&\leq & Ck^{5}||e^{n,3}||+||e^{n,2}||||e^{n,3}||+Ck||\partial_{x}^{3}e^{n,2}||||e^{n,3}||+Ck||e^{n,3}||^{2}.
\end{eqnarray*}
Therefore, using (\ref{470}) and (\ref{473}), for $\mu$ sufficiently large, and the fact that $k=O(N^{-1})$, and taking $k$ sufficiently small, we finally obtain
\begin{eqnarray}
||e^{n,3}||\leq  Ck^{5},\label{4101}
\end{eqnarray}
To conclude the proof we have to check (\ref{495}). This is not hard to verify, in view of (\ref{494}), (\ref{477}), (\ref{435}), (\ref{461}), (\ref{491}), (\ref{492}), (\ref{456}), (\ref{dd34}), assuming as usual that $\mu$ is sufficiently large. Therefore, by (\ref{4101}) and (\ref{420}) the proof of Proposition \ref{propo43} is now complete.
\end{proof}

\section{Error estimate for the fully discrete scheme}
\label{sec5}
In this section we will consider again the fully discrete scheme given by (\ref{411}) or (\ref{413}) and corresponding to the temporal discretization of (\ref{dd24}) by the general IMR-based, $s-$stage RK-composition method given by (\ref{44}) or (\ref{45}), and prove, under certain conditions on the discretization parameters and provided the solution of (\ref{dd12}) belongs to $H^{\mu}$ for $\mu$ sufficiently large, that it has a unique solution $U^{n}$ satisfying the $L^{2}$ error estimate
\begin{eqnarray*}
\max_{0\leq n\leq M}||U^{n}-u(t^{n})||\leq C\left(k^{\alpha}+N^{1-\mu}\right),
\end{eqnarray*}
where $\alpha$ is a positive integer, and the local temporal error, defined by an analogous formula to (\ref{416}), will be {\it assumed} to be of $O(k^{\alpha+1})$ in $L^{2}$. (In section \ref{sec44} we considered the special case corresponding to $s=3$ and constants $b_{i}$ given by (\ref{46}) and proved that for that scheme $\alpha$ was equal to $4$ provided $\mu$ was sufficiently large and $kN=O(1)$.)

We first establish notation and present a summary of the main steps of the proof. For convenience in referencing we rewrite here the scheme (\ref{413}). We seek $Y^{n,i}, 0\leq i\leq s$, and $U^{n}$, $0\leq n\leq M$, in $S_{N}$, such that for $0\leq n\leq M-1$
\begin{eqnarray}
Y^{n,0}&=&U^{n},\nonumber\\
Y^{n,i}&=&Y^{n,i-1} +{kb_{i}}{F}\left(\frac{Y^{n,i}+Y^{n,i-1}}{2}\right),\quad 1\leq i\leq s,\nonumber\\
U^{n+1}&=&Y^{n,s},\label{51}
\end{eqnarray} 
and
\begin{eqnarray*}
U^{0}=P_{N}u_{0}.
\end{eqnarray*}
(Recall that for $v\in S_{N}, F(v)=-v_{xxx}-P_{N}f(v)_{x}$, where $f(v)=v^{2}/2$.)
As in section \ref{sec44} we define the local temporal error of the scheme (\ref{51}) in terms of the semidiscrete approximation $u^{N}$. For this purpose we write for $0\leq n\leq M$ $V^{n}=u^{N}(t^{n})$ and define $V^{n,i}\in S_{N}$ for $0\leq i\leq s, 0\leq n\leq M-1$, by
\begin{eqnarray}
V^{n,0}&=&V^{n},\label{52}\\
V^{n,i}&=&V^{n,i-1} +{kb_{i}}{F}\left(\frac{V^{n,i}+V^{n,i-1}}{2}\right),\quad 1\leq i\leq s,\nonumber
\end{eqnarray} 
and the local temporal error $\theta^{n}\in S_{N}, 0\leq n\leq M-1$, as
\begin{eqnarray}
\theta^{n}=V^{n+1}-V^{n,s}\equiv u^{N}(t^{n+1})-V^{n,s}.\label{53}
\end{eqnarray}
(We use the same notation for $V^{n}, V^{n,i}, \theta^{n}$ as in Section \ref{sec44} as no confusion will arise.)
For the local error we will assume that
\begin{eqnarray}
\max_{0\leq n\leq M-1}||\theta^{n}||\leq C k^{\alpha+1}.\label{54}
\end{eqnarray}
We let $\epsilon^{n}=V^{n}-U^{n}\equiv u^{N}(t^{n})-U^{n}$. Our aim will be to prove that $\max_{n}||\epsilon^{n}||=O(k^{\alpha})$, which, together with (\ref{dd31}), will give the desired error estimate 
\begin{eqnarray*}
\max_{0\leq n\leq M}||U^{n}-u(t^{n})||\leq C\left(k^{\alpha}+N^{1-\mu}\right).
\end{eqnarray*}
We also let $\epsilon^{n,i}=V^{n,i}-Y^{n,i}, 1\leq i\leq s$, and note, in view of (\ref{53}), (\ref{51}), that
\begin{eqnarray*}
\epsilon^{n+1}=V^{n+1}-U^{n+1}=\theta^{n}+V^{n,s}-Y^{n,s}=\theta^{n}+\epsilon^{n,s}.\label{55}
\end{eqnarray*}
Obviously, cf. section \ref{sec43}, the $Y^{n,i}$ and $V^{n,i}$ exist and satisfy for all $n$ and $1\leq i\leq s$ the $L^{2}$ conservation laws
\begin{eqnarray}
&&||Y^{n,i}||=||U^{n}||=||U^{0}||,\nonumber\\
&&||V^{n,i}||=||V^{n}||=||u^{N}(t^{n})||=||u^{N}(0)||.\label{57}
\end{eqnarray}
In order to bound the $\epsilon^{n,i}$ and $\epsilon^{n}$ in $L^{2}$, the $L^{2}$ bounds of $V^{n,i}$ in (\ref{57}) are not enough. So we first establish in Lemma \ref{lemma51} a bound for $||V^{n,i}||_{1,\infty}$ uniformly in $n$ and $i$. The proof of Lemma \ref{lemma52} follows easily; in it
we show that $\max_{n}||\epsilon^{n}||\leq Ck^{\alpha}$ after establishing estimates of the form
$$\max_{i}||\epsilon^{n,i}||\leq (1+Ck)||\epsilon^{n}||.$$
Finally, in Theorem \ref{Theo51}, we prove the uniqueness of the fully discrete approximations $U^{n}, Y^{n,i}$, and the final error estimate 
\begin{eqnarray*}
\max_{n}||U^{n}-u(t^{n})||\leq C\left(k^{\alpha}+N^{1-\mu}\right).
\end{eqnarray*}
\begin{lemma}
\label{lemma51}
Let $V^{n,i}$ be defined by (\ref{52}). Suppose that $\mu$ is sufficiently large, $k$ is sufficiently small, and that $k=O(N^{-1/2})$. Then
\begin{eqnarray}
\max_{i,n}||V^{n,i}||_{1,\infty}\leq C.\label{58}
\end{eqnarray} 
\end{lemma}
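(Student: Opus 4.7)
The plan is to bound $V^{n,i}$ in $W^{1,\infty}$ by comparing it to the smooth semidiscrete solution at the staggered times $\tau^{n,i}:=t^n+(b_1+\cdots+b_i)k$. I would introduce $\eta^{n,i}:=V^{n,i}-u^N(\tau^{n,i})\in S_N$, noting that $\eta^{n,0}=0$. Since a single stage of (\ref{52}) is just an IMR step of length $b_i k$, and $u^N$ is as smooth in $t$ as we wish by Proposition~\ref{propo32}, a standard Taylor expansion gives
\[
u^N(\tau^{n,i})-u^N(\tau^{n,i-1}) = kb_i\,F\!\left(\tfrac{u^N(\tau^{n,i})+u^N(\tau^{n,i-1})}{2}\right) + \sigma^{n,i},\qquad \|\sigma^{n,i}\|\leq Ck^3,
\]
for $\mu$ sufficiently large. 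Subtracting this from (\ref{52}) yields the error recursion $\eta^{n,i}-\eta^{n,i-1}=kb_i[F(w_1)-F(w_2)]-\sigma^{n,i}$, where $w_1:=(V^{n,i}+V^{n,i-1})/2$ and $w_2:=(u^N(\tau^{n,i})+u^N(\tau^{n,i-1}))/2$, with $w_1-w_2=(\eta^{n,i}+\eta^{n,i-1})/2\in S_N$.

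The core is an $L^2$ energy estimate obtained by pairing the recursion with $\eta^{n,i}+\eta^{n,i-1}=2(w_1-w_2)$. Because $w_1-w_2\in S_N$, the projector $P_N$ hidden in $F$ may be dropped; the third-derivative contribution vanishes by periodicity; and using $w_1 w_{1,x}-w_2 w_{2,x}=\tfrac12((w_1+w_2)(w_1-w_2))_x$ followed by one integration by parts leads to the identity
\[
(F(w_1)-F(w_2),\,w_1-w_2) = -\tfrac14\bigl((w_1+w_2)_x,\,(w_1-w_2)^2\bigr).
\]
Combined with $\|\sigma^{n,i}\|\leq Ck^3$ this produces the key inequality
\[
\|\eta^{n,i}\|^2-\|\eta^{n,i-1}\|^2 \leq Ck\,|(w_1+w_2)_x|_\infty\bigl(\|\eta^{n,i}\|^2+\|\eta^{n,i-1}\|^2\bigr) + Ck^3\bigl(\|\eta^{n,i}\|+\|\eta^{n,i-1}\|\bigr).
\]

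The main obstacle is controlling $|(w_1+w_2)_x|_\infty$, since $V^{n,i}_x$ is not a priori bounded in $L^\infty$. Writing $w_1=w_2+\tfrac12(\eta^{n,i}+\eta^{n,i-1})$ and invoking Proposition~\ref{propo32} together with the inverse estimate (\ref{dd23}) gives $|(w_1+w_2)_x|_\infty \leq C+CN^{3/2}(\|\eta^{n,i}\|+\|\eta^{n,i-1}\|)$. I would close the resulting cubic-in-$\eta$ inequality by a bootstrap: assuming $\|\eta^{n,j}\|\leq Mk^3$ for the relevant $j$, the CFL-type hypothesis $k=O(N^{-1/2})$ gives $N^{3/2}k^3\leq C$, so the coefficient $|(w_1+w_2)_x|_\infty$ collapses to $C(1+M)$ and the recursion reduces to $\|\eta^{n,i}\|^2\leq(1+Ck)\|\eta^{n,i-1}\|^2+CMk^6$. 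Iterating over the (constant) $s$ stages from $\eta^{n,0}=0$ yields $\|\eta^{n,i}\|\leq\sqrt{C'M}\,k^3$, and choosing $M$ large enough closes the bootstrap. The bootstrap is itself justified by a continuity-in-$k$ argument: for $k$ small the implicit equations for $V^{n,i}$ admit unique branches depending continuously on $k$ that collapse to $u^N(t^n)$ at $k=0$, so $\eta^{n,i}(k)$ cannot first cross the threshold $Mk^3$ without immediately contradicting the derived estimate at that crossing.

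Having obtained $\|\eta^{n,i}\|\leq Ck^3$ uniformly in $n$ and $i$, one further application of the inverse inequality (\ref{dd23}) gives $\|\eta^{n,i}\|_{1,\infty}\leq C_0 N^{3/2}\|\eta^{n,i}\|\leq CN^{3/2}k^3\leq C$, again thanks to $k=O(N^{-1/2})$. Combining with $\|u^N(\tau^{n,i})\|_{1,\infty}\leq C$, which follows from Proposition~\ref{propo32} and Sobolev embedding for $\mu$ sufficiently large, yields the desired uniform bound $\|V^{n,i}\|_{1,\infty}\leq C$.
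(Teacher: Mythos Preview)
Your overall architecture---compare $V^{n,i}$ with $u^N(\tau^{n,i})$, introduce the IMR defect $\sigma^{n,i}=O(k^3)$, and test the difference equation against $\eta^{n,i}+\eta^{n,i-1}$---is exactly the paper's. But you have missed a one-line simplification that the paper exploits and that removes the bootstrap entirely. In your identity
\[
(F(w_1)-F(w_2),\,w_1-w_2)=-\tfrac14\bigl((w_1+w_2)_x,(w_1-w_2)^2\bigr),
\]
write $w_1+w_2=2w_2+(w_1-w_2)$. The extra piece contributes $\int (w_1-w_2)_x(w_1-w_2)^2\,dx=\tfrac13\int\partial_x\bigl((w_1-w_2)^3\bigr)\,dx=0$ by periodicity. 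Hence the coefficient in front of $\|\eta^{n,i}+\eta^{n,i-1}\|^2$ is simply $|w_{2,x}|_\infty$, and $w_2=\tfrac12(u^N(\tau^{n,i})+u^N(\tau^{n,i-1}))$ involves only the semidiscrete solution, so $|w_{2,x}|_\infty\le C$ directly by Proposition~\ref{propo32}. The recursion then closes linearly, giving $\|\eta^{n,i}\|\le Ck^3$ with no circularity, and the $W^{1,\infty}$ bound follows as you indicate.

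As written, your bootstrap has a genuine gap. From the quadratic inequality $x\le C'kN^{3/2}x^2+C'k^3$ (with $x=\|\eta^{n,i}\|$) you cannot conclude $x\le Ck^3$ without an a~priori smallness bound selecting the lower branch; the $L^2$ conservation $\|V^{n,i}\|=\|u^N(0)\|$ only gives $x\le C$, which does not exclude the large root since $kN^{3/2}$ is unbounded under $k=O(N^{-1/2})$. Your continuity-in-$k$ justification is also incomplete: the $V^{n,i}$ are produced by Brouwer's theorem, which gives neither uniqueness nor continuous dependence, and an implicit-function-theorem argument at fixed $N$ would still have to be made uniform in $N$. The algebraic cancellation above sidesteps all of this.
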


\begin{remark*}
Here and in the sequel we let $\tau^{n,i}=t^{n}+k(b_{1}+b_{2}+\cdots+b_{i}), 1\leq i\leq s$, so that $\tau^{n,s}=t^{n+1}$. Since some of the $b_{i}$ may be negative, and some $\tau^{n,i}$ may exceed $t^{n+1}$, as was remarked in the course of the proof of Proposition \ref{propo43}, it may be necessary to extend the well-posedness of (\ref{dd12}) and the validity of (\ref{dd24}) in temporal intervals of the form $[-l_{1}k,T+l_{2}k]$ for small nonnegative integers $l_{1}, l_{2}$. In such temporal intervals the bounds in (\ref{dd31}) and (\ref{dd34}) obviously hold.
\end{remark*}
\begin{proof}
We break the proof in three steps for ease in reading it.
\begin{itemize}
\item[(i)] \underline{First prove that $\max_{n}||V^{n,1}||_{1,\infty}\leq C$}.
\end{itemize}
We will show that $V^{n,1}$ is close to $u^{N}(\tau^{n,1})$, specifically to $O(k^{3})$ in $L^{2}$, and then use (\ref{dd23}) and (\ref{dd34}) to prove the desired bound. For this purpose we first need the following consistency result for one step of length $kb_{1}$ for the IMR scheme, which is easily established. As before, we denote the values of $u^{N}$ and its derivatives at $t^{n}$ simply by $u^{N}, u_{t}^{N}$, etc.

Define $\zeta^{n,1}\in S_{N}$ by the equation
\begin{eqnarray}
u^{N}(\tau^{n,1})=u^{N}+kb_{1}F\left(\frac{u^{N}(\tau^{n,1})+u^{N}}{2}\right)+\zeta^{n,1}.\label{59}
\end{eqnarray}
Then, as expected, we have, for $\mu$ sufficiently large,
\begin{eqnarray}
\max_{n}||\zeta^{n,1}||\leq Ck^{3}.\label{510}
\end{eqnarray}
To see this, in our setting, we write
\begin{eqnarray}
\zeta^{n,1}=\omega_{1}^{n}-\omega_{2}^{n},\label{511}
\end{eqnarray}
where
\begin{eqnarray*}
\omega_{1}^{n}&=&u^{N}(\tau^{n,1})-u^{N}-\frac{kb_{1}}{2}\left(u_{t}^{N}(\tau^{n,1})+u_{t}^{N}\right),\\
\omega_{2}^{n}&=&kb_{1}P_{N}\left(\frac{f(u^{N}(\tau^{n,1}))_{x}+f(u^{N})_{x}}{2}-f\left(\frac{u^{N}(\tau^{n,1})+u^{N}}{2}\right)_{x}\right).
\end{eqnarray*}
By Taylor's theorem and (\ref{dd34}) we get for $\mu$ sufficiently large
\begin{eqnarray}
||\omega_{1}^{n}||\leq Ck^{3}\max_{t}||\partial_{t}^{3}u^{N}||\leq Ck^{3}.\label{513}
\end{eqnarray}
To estimate $\omega_{2}^{n}$ we write
\begin{eqnarray}
\omega_{2}^{n}=kb_{1}(\rho^{n}-\sigma^{n}),\label{514}
\end{eqnarray}
where
\begin{eqnarray*}
\rho^{n}=P_{N}\left(\frac{f(u^{N}(\tau^{n,1}))_{x}+f(u^{N})_{x}}{2}\right),\quad
\sigma^{n}=P_{N}\left(f\left(\frac{u^{N}(\tau^{n,1})+u^{N}}{2}\right)_{x}\right).
\end{eqnarray*}
For $\rho^{n}$ note that by (\ref{dd24})
\begin{eqnarray*}
\rho^{n}=-\frac{1}{2}\left(u_{t}^{N}(\tau^{n,1})+u_{t}^{N}\right)-\frac{1}{2}\partial_{x}^{3}\left(u^{N}(\tau^{n,1})+u^{N}\right).
\end{eqnarray*}
Hence, by Taylor's theorem, putting $s^{n,1}=\frac{1}{2}(t^{n}+\tau^{n,1})$ we get
\begin{eqnarray}
\rho^{n}=-u_{t}^{N}(s^{n,1})-\partial_{x}^{3}u^{N}(s^{n,1})+\widetilde{\rho}^{n},\label{516}
\end{eqnarray}
where, for $\mu$ sufficiently large, by (\ref{dd34})
\begin{eqnarray}
||\widetilde{\rho}^{n}||\leq Ck^{2}.\label{517}
\end{eqnarray}
To estimate $\sigma^{n}$, let $\eta^{n,1}=\frac{1}{2}\left(u^{N}(\tau^{n,1})+u^{N}\right)-u^{N}(s^{n,1})$ so that, for $\mu$ sufficiently large and by (\ref{dd34})
\begin{eqnarray}
||\eta^{n,1}||_{1}\leq Ck^{2}.\label{518}
\end{eqnarray}
Therefore for $\sigma^{n}$ we have
\begin{eqnarray*}
\sigma^{n}=&=&P_{N}\left(f(u^{N}(s^{n,1})+\eta^{n,1})_{x}\right)\\
&=&P_{N}\left(f(u^{N}(s^{n,1}))_{x}+f(\eta^{n,1})_{x}+(u^{N}(s^{n,1}),\eta^{n,1})_{x}\right),
\end{eqnarray*}
i.~e.
\begin{eqnarray}
\sigma^{n}=P_{N}\left(f(u^{N}(s^{n,1}))_{x}\right)+\widetilde{\sigma}^{n},\label{519}
\end{eqnarray}
where, by (\ref{518}), Sobolev's theorem, and (\ref{dd34}), for $\mu$ sufficiently large,
\begin{eqnarray}
||\widetilde{\sigma}^{n}||\leq Ck^{2}.\label{520}
\end{eqnarray}
Therefore, by (\ref{514}), (\ref{516}), (\ref{519}) and (\ref{dd36}) we get
\begin{eqnarray*}
\omega_{2}^{n}&=&kb_{1}\left(-u_{t}^{N}(s^{n,1})-\partial_{x}^{3}u^{N}(s^{n,1})-P_{N}\left(f(u^{N}(s^{n,1}))_{x}\right)+\widetilde{\rho}^{n}-\widetilde{\sigma}^{n}\right)\\
&=&kb_{1}(\widetilde{\rho}^{n}-\widetilde{\sigma}^{n}),
\end{eqnarray*}
and by (\ref{517}) and (\ref{520})
\begin{eqnarray*}
||\omega_{2}^{n}||\leq Ck^{3},\label{521}
\end{eqnarray*}
which yields (\ref{510}), in view of (\ref{511}) and (\ref{513}).

We now proceed to bound $V^{n,1}$ in the $||\cdot||_{1,\infty}$ norm. By (\ref{52}) for $i=1$ and (\ref{59}), (since $V^{n}=u^{N}$), we obtain
\begin{eqnarray*}
V^{n,1}-u^{N}(\tau^{n,1})
&=&kb_{1}\left(F\left(\frac{V^{n,1}+u^{N}}{2}\right)-F\left(\frac{u^{N}(\tau^{n,1})+u^{N}}{2}\right)\right)-\zeta^{n,1}.
\end{eqnarray*}
Therefore, by integration by parts we see that
\begin{eqnarray}
||V^{n,1}-u^{N}(\tau^{n,1})||^{2}
&=&kb_{1}\left(f\left(\frac{V^{n,1}+u^{N}}{2}\right)_{x}-f\left(\frac{u^{N}(\tau^{n,1})+u^{N}}{2}\right)_{x},V^{n,1}-u^{N}(\tau^{n,1})\right)\nonumber\\
&&-\left(\zeta^{n,1},V^{n,1}-u^{N}(\tau^{n,1})\right).\label{522}
\end{eqnarray}
Now, by integration by parts, and (\ref{dd34}), for $\mu$ sufficiently large, we see that
\begin{eqnarray*}
&&\left|\left(f\left(\frac{V^{n,1}+u^{N}}{2}\right)_{x}-f\left(\frac{u^{N}(\tau^{n,1})+u^{N}}{2}\right)_{x},V^{n,1}-u^{N}(\tau^{n,1})\right)\right|\\
&=&\left|\left(f\left(\frac{u^{N}(\tau^{n,1})+u^{N}}{2}+
\frac{V^{n,1}-u^{N}(\tau^{n,1})}{2}\right)_{x}-f\left(\frac{u^{N}(\tau^{n,1})+u^{N}}{2}\right)_{x},\right.\right.\\
&&\left.\left.{V^{n,1}-u^{N}(\tau^{n,1})}\right)\right|\\
&=&\left|\left(\left[\left(\frac{u^{N}(\tau^{n,1})+u^{N}}{2}\right)\left(
\frac{V^{n,1}-u^{N}(\tau^{n,1})}{2}\right)\right]_{x},
{V^{n,1}-u^{N}(\tau^{n,1})}\right)\right|\\
&\leq &C||u^{N}(\tau^{n,1})+u^{N}||_{1,\infty}||V^{n,1}-u^{N}(\tau^{n,1})||^{2}\\
&\leq &C||V^{n,1}-u^{N}(\tau^{n,1})||^{2}.
\end{eqnarray*}
We conclude by (\ref{522}), and (\ref{510}), for $k$ sufficiently small, that
\begin{eqnarray}
||V^{n,1}-u^{N}(\tau^{n,1})||\leq Ck^{3}.\label{523}
\end{eqnarray}
Therefore, by the above, (\ref{dd23}), and (\ref{dd34}), for $\mu$ sufficiently large, we get
\begin{eqnarray}
||V^{n,1}||_{1,\infty}&\leq &||V^{n,1}-u^{N}(\tau^{n,1})||_{1,\infty}+||u^{N}(\tau^{n,1})||_{1,\infty}\nonumber\\
&\leq &Ck^{3}N^{3/2}+C\leq C,\label{524}
\end{eqnarray}
using the mesh condition $k=O\left(N^{-1/2}\right)$.
\begin{itemize}
\item[(ii)] \underline{Prove now that $\max_{n}||V^{n,2}||_{1,\infty}\leq C$}.
\end{itemize}
We will follow the same general plan as in (i). We let $\zeta^{n,2}$ be the local temporal error of the scheme during the substep $\tau^{n,1}\mapsto\tau^{n,2}$, i.~e. define it by the equation
\begin{eqnarray}
u^{N}(\tau^{n,2})=u^{N}(\tau^{n,1})+kb_{2}F\left(\frac{u^{N}(\tau^{n,2})+u^{N}(\tau^{n,1})}{2}\right)+\zeta^{n,2}.\label{525}
\end{eqnarray}
Then, we may prove as in (i), {\it mutatis mutandis} that
\begin{eqnarray}
\max_{n}||\zeta^{n,2}||\leq Ck^{3}.\label{526}
\end{eqnarray}
By (\ref{52}) for $i=2$ and (\ref{525}) we have
\begin{eqnarray}
V^{n,2}-u^{N}(\tau^{n,2})&=&V^{n,1}-u^{N}(\tau^{n,1})\nonumber\\
&&+kb_{2}\left(F\left(\frac{V^{n,2}+V^{n,1}}{2}\right)-F\left(\frac{u^{N}(\tau^{n,2})+u^{N}(\tau^{n,1})}{2}\right)\right)\nonumber\\
&&-\zeta^{n,2}.\label{527}
\end{eqnarray}
In order to simplify a bit the algebra we define $\chi_{j}\in S_{N}, 1\leq j\leq 4$, as
\begin{eqnarray*}
&&\chi_{1}=V^{n,2}-u^{N}(\tau^{n,2}),\quad
\chi_{2}=V^{n,1}-u^{N}(\tau^{n,1}),\\
&&\chi_{3}=\frac{V^{n,2}+V^{n,1}}{2},\quad
\chi_{4}=\frac{u^{N}(\tau^{n,2})+u^{N}(\tau^{n,1})}{2}.
\end{eqnarray*}
Then, (\ref{527}) is written as
\begin{eqnarray*}
\chi_{1}-\chi_{2}=kb_{2}\left(F\left(\chi_{3}\right)-F\left(\chi_{4}\right)\right)-\zeta^{n,2}.\label{528a}
\end{eqnarray*}
Take $L^{2}$ inner products in the above with $\frac{\chi_{1}+\chi_{2}}{2}$, noting that $\frac{\chi_{1}+\chi_{2}}{2}=\chi_{3}-\chi_{4}$ and using integration by parts, and get
\begin{eqnarray}
\frac{1}{2}\left(||\chi_{1}||^{2}-||\chi_{2}||^{2}\right)&=&
kb_{2}\left(f\left(\chi_{4}+\frac{\chi_{1}+\chi_{2}}{2}\right)_{x}-f\left(\chi_{4}\right)_{x},\frac{\chi_{1}+\chi_{2}}{2}\right)\nonumber\\
&&-\left(\zeta^{n,2},\frac{\chi_{1}+\chi_{2}}{2}\right)\label{528}
\end{eqnarray}
Now, by integration by parts and (\ref{dd34}), for $\mu$ sufficiently large, we see that
\begin{eqnarray}
\left|\left(f\left(\chi_{4}+\frac{\chi_{1}+\chi_{2}}{2}\right)_{x}-f\left(\chi_{4}\right)_{x},\frac{\chi_{1}+\chi_{2}}{2}\right)\right|&=&
\left|\left(\left(\chi_{4}\left(\frac{\chi_{1}+\chi_{2}}{2}\right)\right)_{x},\frac{\chi_{1}+\chi_{2}}{2}\right)\right|\nonumber\\
&&\leq  C||\chi_{4}||_{1,\infty}
||\chi_{1}+\chi_{2}||^{2}\nonumber\\
&\leq &C||\chi_{1}+\chi_{2}||^{2},\label{529}
\end{eqnarray}
and (\ref{526}), (\ref{528}), (\ref{529}) yield
\begin{eqnarray*}
\frac{1}{2}\left(||\chi_{1}||^{2}-||\chi_{2}||^{2}\right)\leq Ck\left(||\chi_{1}||+||\chi_{2}||\right)^{2}+Ck^{3}||\left(||\chi_{1}||+||\chi_{2}||\right),
\end{eqnarray*}
i.~e.
\begin{eqnarray*}
||\chi_{1}||-||\chi_{2}||\leq Ck\left(||\chi_{1}||+||\chi_{2}||\right)+Ck^{3},
\end{eqnarray*}
from which, if we recall the definition of $\chi_{1}$ and $\chi_{2}$, it follows for $k$ sufficiently small, that

\begin{eqnarray*}
||V^{n,2}-u^{N}(\tau^{n,2})||
\leq C||V^{n,1}-u^{N}(\tau^{n,1})||+Ck^{3}.
\end{eqnarray*}
Therefore, by (\ref{523})
\begin{eqnarray}
||V^{n,2}-u^{N}(\tau^{n,2})||\leq Ck^{3},\label{532}
\end{eqnarray}
from which, as in the derivation of (\ref{524}), we get, for $\mu$ sufficiently large, since $k=O(N^{-1/2})$, that
\begin{eqnarray*}
||V^{n,2}||_{1,\infty}\leq C.\label{533}
\end{eqnarray*}
Hence, the proof of (ii) is complete.
\begin{itemize}
\item[(iii)] \underline{Prove that $||V^{n,i}||_{1,\infty}\leq C, 3\leq i\leq s$}.
\end{itemize}
The bounds $||V^{n,i}-u^{N}(\tau^{n,i})||\leq Ck^{3}$ implying that
$||V^{n,i}||_{1,\infty}\leq C, 3\leq i\leq s$, are obtained entirely analogously, as in step (ii) above, and their proof is omitted. We conclude that (\ref{58}) holds.
\end{proof}

\begin{lemma}
\label{lemma52}
Let $\epsilon^{n}=V^{n}-U^{n}$, where $V^{n}=u^{N}$, and $U^{n}$ is the fully discrete approximation, defined by (\ref{51}). We assume the smoothness of $u$ and the mesh condition stated in Lemma \ref{lemma51} and we suppose that the temporal local error estimate (\ref{54}) holds. Then
\begin{eqnarray}
\max_{n}||\epsilon^{n}||\leq Ck^{\alpha}.\label{534}
\end{eqnarray}
\end{lemma}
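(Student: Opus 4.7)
The plan is to derive a one-step recursion for $\|\epsilon^n\|$ by tracking how the stage errors $\epsilon^{n,i}=V^{n,i}-Y^{n,i}$ propagate across the $s$ substeps of (\ref{51})--(\ref{52}), and then to close via discrete Gronwall, exploiting the fact that $\epsilon^0 = V^0 - U^0 = P_N u_0 - P_N u_0 = 0$.

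First I would subtract the $i$-th line of (\ref{51}) from the corresponding line of (\ref{52}) to obtain
\begin{eqnarray*}
\epsilon^{n,i} - \epsilon^{n,i-1} = k b_i\bigl[F(W_1) - F(W_2)\bigr], \quad W_1=\tfrac{V^{n,i}+V^{n,i-1}}{2},\; W_2=\tfrac{Y^{n,i}+Y^{n,i-1}}{2}.
\end{eqnarray*}
Taking the $L^2$ inner product with $\epsilon^{n,i}+\epsilon^{n,i-1} = 2(W_1-W_2)$, the dispersive contribution $(-\partial_x^3(W_1-W_2),W_1-W_2)$ vanishes by periodicity. Writing $f(W_1)-f(W_2)=\tfrac12(W_1+W_2)(W_1-W_2)$ and integrating by parts in the Fourier-projected nonlinear term (exactly as in the $L^2$-conservation argument of Section \ref{sec43}), the remaining contribution collapses to a term of the form $-\tfrac14 k b_i\bigl((W_1+W_2)_x(W_1-W_2),W_1-W_2\bigr)$, yielding
\begin{eqnarray*}
\bigl|\|\epsilon^{n,i}\|^2 - \|\epsilon^{n,i-1}\|^2\bigr| \;\leq\; C\,k\,|(W_1+W_2)_x|_\infty \bigl(\|\epsilon^{n,i}\|^2 + \|\epsilon^{n,i-1}\|^2\bigr).
\end{eqnarray*}

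Next I would bound $|(W_1+W_2)_x|_\infty$ by a constant uniformly in $n,i,k,N$. Lemma \ref{lemma51} already delivers $\|V^{n,i}\|_{1,\infty}\leq C$, so the task reduces to controlling $|Y^{n,i}_x|_\infty$. Writing $Y^{n,i}=V^{n,i}-\epsilon^{n,i}$ and using the inverse inequality (\ref{dd23}) gives $|\epsilon^{n,i}_x|_\infty \leq C_0 N^{3/2}\|\epsilon^{n,i}\|$. Under the mesh condition $k=O(N^{-1/2})$ of Lemma \ref{lemma51}, a tentative bound $\|\epsilon^{n,i}\|\leq C_* k^\alpha$ gives $|\epsilon^{n,i}_x|_\infty = O(N^{3/2} k^\alpha)=O(k^{\alpha-3})$, which is $O(k)$ for the case $\alpha=4$ relevant to Proposition \ref{propo43}. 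I would therefore run a continuation/bootstrap argument: assume $\|\epsilon^{m,j}\|\leq C_* k^\alpha$ for all indices processed so far, deduce $|(W_1+W_2)_x|_\infty\leq C$ at the current stage, and verify the tentative bound {\it a posteriori} once the Gronwall estimate is in hand.

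With the nonlinear coefficient bounded, the stage inequality becomes $\|\epsilon^{n,i}\|\leq (1+Ck)\|\epsilon^{n,i-1}\|$ for $k$ small, and cascading over $i=1,\ldots,s$ (with $\epsilon^{n,0}=\epsilon^n$) produces $\|\epsilon^{n,s}\|\leq (1+C'k)\|\epsilon^n\|$. Combined with the identity $\epsilon^{n+1}=\theta^n+\epsilon^{n,s}$ and the local error hypothesis (\ref{54}), this gives the sought recursion $\|\epsilon^{n+1}\|\leq (1+C'k)\|\epsilon^n\|+Ck^{\alpha+1}$. Since $\epsilon^0=0$ and $Mk=T$, the discrete Gronwall lemma yields $\|\epsilon^n\|\leq Ck^\alpha e^{C'T}$, which is (\ref{534}). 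The principal obstacle is the circular dependence sketched above: the energy identity in the first step requires an $L^\infty$ bound on $Y^{n,i}_x$, but $Y^{n,i}$ lives on $S_N$ and is only controlled through inverse inequalities in terms of the very error being estimated; the bootstrap step together with the mesh condition $k=O(N^{-1/2})$ are precisely what untangle this circularity.
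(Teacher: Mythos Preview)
Your overall strategy---subtract (\ref{51}) from (\ref{52}), pair with $\epsilon^{n,i}+\epsilon^{n,i-1}$, let the dispersive term vanish, bound the nonlinear remainder, cascade over the $s$ stages, and close with discrete Gronwall using $\epsilon^0=0$---is exactly the paper's. The difference lies in how you bound the nonlinear remainder, and your choice creates work the paper avoids entirely.

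You estimate $\bigl((W_1+W_2)_x(W_1-W_2),\,W_1-W_2\bigr)$ via $|(W_1+W_2)_x|_\infty$, which drags in $Y^{n,i}$ and forces the bootstrap. But write $W_1+W_2=2W_1-(W_1-W_2)$ and note that $\bigl((W_1-W_2)_x,(W_1-W_2)^2\bigr)=\tfrac13\int\partial_x\bigl((W_1-W_2)^3\bigr)\,dx=0$ by periodicity; your term therefore equals $2\bigl((W_1)_x(W_1-W_2),\,W_1-W_2\bigr)$, and only $|(W_1)_x|_\infty$ is needed. Since $W_1=\tfrac12(V^{n,i}+V^{n,i-1})$, Lemma~\ref{lemma51} supplies this bound directly. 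The paper does precisely this (it writes the second argument of $F$ as $\tfrac12(V^{n,i}+V^{n,i-1})-\tfrac12(\epsilon^{n,i}+\epsilon^{n,i-1})$ from the outset), arriving at
\[
\|\epsilon^{n,i}\|^2-\|\epsilon^{n,i-1}\|^2 \;\le\; Ck\,\|V^{n,i}+V^{n,i-1}\|_{1,\infty}\,\|\epsilon^{n,i}+\epsilon^{n,i-1}\|^2,
\]
with no reference to $Y^{n,i}$ at all, and hence no circularity.

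The practical consequence: your bootstrap needs $N^{3/2}k^\alpha=O(1)$ under $k=O(N^{-1/2})$, i.e.\ $\alpha\ge 3$, and closes cleanly only for $\alpha\ge 4$; the lemma and Theorem~\ref{Theo51} are stated for general $\alpha\ge 1$ (and Proposition~\ref{propo41} corresponds to $\alpha=2$). Once you make the one-line simplification above, your ``principal obstacle'' disappears and the argument covers all $\alpha$, as in the paper.
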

\begin{proof}
We use throughout the notation introduced in the beginning of the section.
We first estimate $\epsilon^{n,1}=V^{n,1}-Y^{n,1}$ in terms of $\epsilon^{n}$.
Since 
\begin{eqnarray*}
\epsilon^{n,1}=\epsilon^{n}+kb_{1}\left(F\left(\frac{V^{n,1}+V^{n}}{2}\right)-F\left(\frac{Y^{n,1}+U^{n}}{2}\right)\right),
\end{eqnarray*}
we have
\begin{eqnarray*}
\epsilon^{n,1}-\epsilon^{n}=kb_{1}\left(F\left(\frac{V^{n,1}+V^{n}}{2}\right)-F\left(\frac{V^{n,1}+V^{n}}{2}-\frac{\epsilon^{n,1}+\epsilon^{n}}{2}\right)\right).
\end{eqnarray*}
Taking $L^{2}$ inner products in this equation with $\frac{\epsilon^{n,1}+\epsilon^{n}}{2}$ we obtain, by integration by parts
\begin{eqnarray*}
\frac{1}{2}\left(||\epsilon^{n,1}||^{2}-||\epsilon^{n}||^{2}\right)
&=&-kb_{1}\left(f\left(\frac{V^{n,1}+V^{n}}{2}-\frac{\epsilon^{n,1}+\epsilon^{n}}{2}\right)_{x}-f\left(\frac{V^{n,1}+V^{n}}{2}\right)_{x},\frac{\epsilon^{n,1}+\epsilon^{n}}{2}\right).
\end{eqnarray*}
Therefore, using integration by parts again, we get
\begin{eqnarray*}
||\epsilon^{n,1}||^{2}-||\epsilon^{n}||^{2}\leq
Ck ||V^{n,1}+V^{n}||_{1,\infty}
||\epsilon^{n,1}+\epsilon^{n}||^{2},
\end{eqnarray*}
from which, taking into account (\ref{58}) and (\ref{dd34}) , it follows that
\begin{eqnarray*}
||\epsilon^{n,1}||-||\epsilon^{n}||\leq
Ck 
\left(||\epsilon^{n,1}||+||\epsilon^{n}||\right),\label{535}
\end{eqnarray*}
for all $n$. Hence, for $k$ sufficiently small, for all $n$ it holds that
\begin{eqnarray}
||\epsilon^{n,1}||\leq
(1+Ck) ||\epsilon^{n}||.\label{537}
\end{eqnarray}
We get similarly that
\begin{eqnarray}
\max_{i}||\epsilon^{n,i}||\leq
(1+Ck) ||\epsilon^{n}||.\label{538}
\end{eqnarray}
This may be seen as follows: Since in view of (\ref{58}), as previously, there holds that
\begin{eqnarray*}
||\epsilon^{n,2}||-||\epsilon^{n,1}||\leq
Ck 
\left(||\epsilon^{n,2}||+||\epsilon^{n,1}||\right),
\end{eqnarray*}
we obtain by (\ref{537}) that $||\epsilon^{n,2}||\leq (1+Ck) ||\epsilon^{n}||$. The general case (\ref{538}) follows inductively.

Recall by (\ref{51}) and (\ref{53}) that $\epsilon^{n+1}=V^{n+1}-U^{n+1}=V^{n+1}-Y^{n,s}=V^{n,s}-Y^{n,s}+\theta^{n}=\epsilon^{n,s}+\theta^{n}$. Therefore, by (\ref{538}), for all $n$ we have
\begin{eqnarray*}
||\epsilon^{n+1}||\leq
(1+Ck) ||\epsilon^{n}||+||\theta^{n}||,
\end{eqnarray*}
from which, by the discrete Gronwall inequality, since $\epsilon^{0}=0$, and the hypothesis (\ref{54}), we conclude that (\ref{534}) holds.
\end{proof}
We now state and prove the main error estimate for our fully discrete method.
\begin{theorem}
\label{Theo51}
Suppose that $\mu$ is sufficiently large, that (\ref{54}) holds for some $\alpha\geq 1$ and suppose that $kN$ is sufficiently small. Then the fully discrete scheme (\ref{51}) has for all $n$ a unique solution $U^{n}$ such that
\begin{eqnarray}
\max_{n}||U^{n}-u(t^{n})||\leq C(k^{\alpha}+N^{1-\mu}).\label{543}
\end{eqnarray} 
\end{theorem}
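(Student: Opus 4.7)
The plan is to combine the semidiscrete error estimate of Theorem \ref{th31} with the bound on $\epsilon^n = u^N(t^n) - U^n$ furnished by Lemma \ref{lemma52}, and then to deduce uniqueness from Lemma \ref{lemma41} by establishing an \emph{a priori} $L^\infty$ ball in which every fully discrete stage necessarily lies. Four steps, in order: (a) existence of solutions from Proposition \ref{propo42}; (b) the $L^2$ estimate for $\epsilon^n$ from Lemma \ref{lemma52}; (c) uniform $L^\infty$ control on $U^n$ and the intermediate $Y^{n,i}$; (d) invoking Lemma \ref{lemma41} together with the triangle inequality.

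The smallness of $kN$ certainly implies $k = O(N^{-1/2})$ and $k$ small, so both Lemma \ref{lemma51} (giving $\max_{n,i}\|V^{n,i}\|_{1,\infty} \leq C$) and Lemma \ref{lemma52} (giving $\max_n \|\epsilon^n\| \leq Ck^\alpha$, together with its intermediate $\max_i \|\epsilon^{n,i}\| \leq (1+Ck)\|\epsilon^n\|$) apply to any solution sequence produced by Proposition \ref{propo42}. The desired bound (\ref{543}) then follows at once from
\[
\|U^n - u(t^n)\| \leq \|\epsilon^n\| + \|u^N(t^n) - u(t^n)\|,
\]
estimating the first term by Lemma \ref{lemma52} and the second by Theorem \ref{th31}.

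For uniqueness I would argue by induction on $n$, starting from $U^0 = P_N u_0$. The inverse inequality (\ref{dd23}) with $j=0$ together with the $L^2$ bounds just obtained gives
\[
|\epsilon^n|_\infty,\ |\epsilon^{n,i}|_\infty \leq C_0 N^{1/2} \cdot C k^\alpha = C\, k^{\alpha-1/2}\,(kN)^{1/2} \leq C k^{\alpha-1/2},
\]
using $kN \leq C_1$; since $\alpha \geq 1$ this is bounded (and small for small $k$). Combined with $|u^N(t^n)|_\infty \leq C$ (from Proposition \ref{propo32} and Sobolev embedding) and $|V^{n,i}|_\infty \leq C$ (Lemma \ref{lemma51}), this yields $|U^n|_\infty,\ |Y^{n,i}|_\infty \leq R$ for a fixed radius $R$ independent of $n$, $k$, $N$. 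Lemma \ref{lemma41} then applies at every step, since its hypothesis $\tfrac{k}{2}\max_i|b_i|\,C_0 N R < 1$ is precisely the smallness of $kN$; thus $Y^{n,i}$, and hence $U^{n+1}$, is uniquely determined by $U^n$, closing the induction.

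The main subtlety is the apparent circularity between uniqueness and the \emph{a priori} $L^\infty$ bound: the argument must apply to every candidate solution, not only to a preselected one. This is resolved by observing that Lemma \ref{lemma52} delivers its $L^2$ estimate for every branch furnished by Proposition \ref{propo42}, so the ball of radius $R$ simultaneously traps all candidate stages, whereupon Lemma \ref{lemma41} collapses the ambiguity. The smallness of $kN$ is used three times: to invoke Lemma \ref{lemma51} (which needs $k = O(N^{-1/2})$), to absorb the inverse-inequality factor $N^{1/2}$ in passing from $L^2$ to $L^\infty$ control of $\epsilon^n$, and as the hypothesis of Lemma \ref{lemma41}.
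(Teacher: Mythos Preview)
Your proof is correct and follows essentially the same approach as the paper: the error estimate via the triangle inequality combining Lemma~\ref{lemma52} with Theorem~\ref{th31}, and then uniqueness via Lemma~\ref{lemma41} after establishing uniform $L^\infty$ bounds on $U^n$ and $Y^{n,i}$ through the inverse inequality and the bounds on $V^{n,i}$ from Lemma~\ref{lemma51}. Your explicit discussion of the apparent circularity between the \emph{a~priori} $L^\infty$ bound and uniqueness, and its resolution (that Lemma~\ref{lemma52} applies to every branch produced by Proposition~\ref{propo42}), is a welcome clarification that the paper leaves implicit.
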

\begin{proof}
Let $U^{n}$ be a solution of (\ref{51}). Then
\begin{eqnarray*}
||U^{n}-u(t^{n})||&\leq &||U^{n}-u^{N}(t^{n})||+||u^{N}(t^{n})-u(t^{n})||
=||\epsilon^{n}||+||u^{N}(t^{n})-u(t^{n})||,
\end{eqnarray*}
and (\ref{543}) follows from (\ref{537}) and (\ref{dd31}).

In order to prove the uniqueness of $U^{n}$ we have to verify the hypotheses of Lemma \ref{lemma41}. Note that it follows from (\ref{534}), (\ref{dd34}), (\ref{dd23}), and our mesh condition that
for all $n$, $|U^{n}|_{\infty}\leq |\epsilon^{n}|_{\infty}+|u^{N}|_{\infty}\leq Ck^{\alpha}N^{1/2}+C\leq R_{1}$, for some constant $R_{1}$, independent of $k, N$. In addition, for all $i$ and $n$, by (\ref{dd23}), (\ref{58}), (\ref{538}), (\ref{534}), and our mesh condition, we see that
\begin{eqnarray*}
|Y^{n,i}|_{\infty}&\leq &|\epsilon^{n,i}|_{\infty}+|V^{n,i}|_{\infty}\leq CN^{1/2}||\epsilon^{n,i}||+C\\
&\leq & CN^{1/2}||\epsilon^{n}||+C\leq CN^{1/2}k^{\alpha}+C\leq R_{2},
\end{eqnarray*}
for some constant $R_{2}$ independent of $k$ and $N$.
If $R$ is taken as $\max(R_{1},R_{2})$ by Lemma \ref{lemma41} we have uniqueness of $U^{n}=Y^{n,s}$ if $kN$ is sufficiently small as we have assumed.
\end{proof}
\begin{remark}
Since the fully discrete scheme (\ref{51}) is written as a sequence of IMR steps, its implementation is quite straightforward, as the attendant nonlinear systems are decoupled and may each be solved by an iterative scheme. 

Indeed, suppose that, for some $n$ and $i\geq 1$ $Y^{n,i-1}$ is known. Then if $Z^{*}=\frac{1}{2}\left(Y^{n,i-1}+Y^{n,i}\right)$ it follows that $Z^{*}\in S_{N}$ satisfies
\begin{eqnarray}
Z^{*}=Y^{n,i-1}+\frac{kb_{i}}{2}F(Z^{*}).\label{537r}
\end{eqnarray}
Suppose that the hypotheses of Theorem \ref{Theo51} hold. Then $Z^{*}$ is unique, and if it is known, $Y^{n,i}$ may be computed as $Y^{n,i}=2Z^{*}-Y^{n,i-1}$.

In order to approximate $Z^{*}$, consider the following simple iterative scheme. For $\nu=0,1,2,\ldots$, seek $Z_{\nu}\in S_{N}$, such that
\begin{eqnarray}
Z_{0}&=&Y^{n,i-1},\nonumber\\
\left(I+\frac{kb_{i}}{2}\partial_{x}^{3}\right)Z_{\nu+1}&=&Y^{n,i-1}-\frac{kb_{i}}{2}f(Z_{\nu})_{x},\; \nu=0,1,2,\ldots\label{538r}
\end{eqnarray}
Given $Z_{\nu}$, $Z_{\nu+1}$ satisfies a linear system of equations. The associated homogeneous system clearly has only the trivial solution; hence $Z_{\nu+1}$ is uniquely defined and its Fourier coefficients may be readily computed.

In order to prove the convergence of this scheme, substract the equation defining $Z_{\nu+1}$ in (\ref{538r}) from (\ref{537r}) and get
\begin{eqnarray*}
\left(I+\frac{kb_{i}}{2}\partial_{x}^{3}\right)(Z^{*}-Z_{\nu+1})=-\frac{kb_{i}}{2}\left(f(Z^{*})_{x}-f(Z_{\nu})_{x}\right).
\end{eqnarray*}
Taking the $L^{2}$ inner product of both sides of this equation with $Z^{*}-Z_{\nu+1}$ we see by periodicity that
\begin{eqnarray*}
||Z^{*}-Z_{\nu+1}||^{2}=-\frac{kb_{i}}{2}\left(f(Z^{*})_{x}-f(Z_{\nu})_{x},Z^{*}-Z_{\nu+1}\right),
\end{eqnarray*}
which implies, in view of (\ref{dd23}) and the definition of $f$, that
\begin{eqnarray}
||Z^{*}-Z_{\nu+1}||\leq\frac{C_{0}}{4}|b_{i}|kN |Z^{*}-Z_{\nu+1}|_{\infty}||Z^{*}-Z_{\nu}||.\label{539r}
\end{eqnarray}
Let, for all $n$ and $i$, $|Y^{n,i}|_{\infty}\leq R$. (From the last part of the proof of Theorem \ref{Theo51} we infer that such a constant $R$ exists and is independent of $k$ and $N$.) Then it follows that $|Z^{*}|_{\infty}\leq R$.

We will prove now that for $\nu=0,1,2,\ldots$
\begin{eqnarray}
|Z_{\nu}|_{\infty}\leq R+1.\label{540r}
\end{eqnarray}
Obviously, (\ref{540r}) holds for $\nu=0$. Let now $\nu^{*}\geq 0$ be the maximal integer for which
\begin{eqnarray}
|Z_{\nu}|_{\infty}\leq R+1,\;{\rm for}\; 0\leq\nu\leq\nu^{*}.\label{541r}
\end{eqnarray}
Then, letting $\Gamma=\frac{C_{0}}{2}k|b_{i}|N(R+1)$ and assuming that $kN$ is sufficiently small so that $\Gamma<1$, we have from (\ref{539r}) for $0\leq\nu\leq\nu^{*}$
\begin{eqnarray*}
||Z^{*}-Z_{\nu+1}||\leq\Gamma ||Z^{*}-Z_{\nu}||,
\end{eqnarray*}
and therefore 
\begin{eqnarray}
||Z^{*}-Z_{\nu+1}||\leq\Gamma^{\nu+1} ||Z^{*}-Z_{0}||,\;\;
0\leq\nu\leq\nu^{*}.\label{542r}
\end{eqnarray}
Now, by (\ref{dd23})
\begin{eqnarray*}
|Z_{\nu+1}|_{\infty}\leq |Z^{*}-Z_{\nu+1}|_{\infty}+|Z^{*}|_{\infty}\leq C_{0}N^{1/2}||Z^{*}-Z_{\nu+1}||+R.
\end{eqnarray*}
Hence, using (\ref{542r}) we have
\begin{eqnarray}
|Z_{\nu+1}|_{\infty}\leq C_{0}N^{1/2}\Gamma^{\nu+1} ||Z^{*}-Z_{0}||+R,\;0\leq\nu\leq\nu^{*}.\label{543r} 
\end{eqnarray}
Now, employing the notation introduced in the beginning of the section, we have by the triangle inequality
\begin{eqnarray*}
||Z^{*}-Z_{0}||&=&\left|\left|\frac{1}{2}\left(Y^{n,i}+Y^{n,i-1}\right)-Y^{n,i-1}\right|\right|=\frac{1}{2}||Y^{n,i}-Y^{n,i-1}||\\
&\leq & \frac{1}{2}\left(||\epsilon^{n,i}||+||\epsilon^{n,i-1}||+||V^{n,i}-u^{N}(\tau^{n,i})||\right.\\
&&\left.+||V^{n,i-1}-u^{N}(\tau^{n,i-1})||+||u^{N}(\tau^{n,i})-u^{N}(\tau^{n,i-1})||\right)\\
&\leq & C(k^{\alpha}+k^{3}+k),
\end{eqnarray*}
where, in the last inequality, we was made of (\ref{538}), (\ref{534}), (\ref{523}), (\ref{532}), the remark in (iii) of Lemma \ref{lemma51}, Taylor's theorem, and (\ref{dd34}) for $\mu$ sufficiently large. Therefore, for some constant $C_{1}$, independent of $k$ and $N$, we have
\begin{eqnarray}
||Z^{*}-Z_{0}||\leq C_{1} k.\label{544r}
\end{eqnarray}
We conclude by (\ref{543r}) and (\ref{544r}), in view of our hypothesis on $kN$, that $k$ may be chosen sufficiently small so that for $0\leq \nu\leq\nu^{*}$ it holds that $|Z_{\nu+1}|_{\infty}\leq R+1/2$. Therefore, $\nu^{*}$ was not maximal in (\ref{541r}), and we may continue the argument for all $\nu$. Hence (\ref{542r}) holds for all $\nu$ and gives that $Z_{\nu}$ converges to $Z^{*}$ as $\nu\rightarrow\infty$; from (\ref{542r}) and (\ref{544r}) we have the estimate $||Z_{\nu}-Z^{*}||\leq C\Gamma^{\nu}k, \nu\geq 0$. Therefore, for $\nu=O(abs(\log k))$ we may bound $||Z^{*}-Z_{\nu}||$ by a constant times a sufficiently large power of $k$. We will not analyze here the stability of the overall time-marching scheme.
\end{remark}
\section{Conclusions and extensions}
\label{sec6}
In this paper we analyzed a high-order accurate fully discrete scheme for the periodic ivp for the KdV equation. The problem was discretized in space by the standard Fourier-Galerkin spectral method. For the temporal discretization we used  a diagonally implicit Runge-Kutta scheme of composition type with $s$ stages, cf. \cite{Yoshida1990,HairerLW2004}, effected by $s$ steps of the IMR method. This type of schemes are not A-stable but they are symplectic; hence they are unconditionally $L^{2}-$conservative for the periodic ivp and semidiscretization at hand. They are also easy to implement. We proved that the local temporal error of the scheme with $s=3$ stages applied to the semidiscrete equations is of $O(k^{5})$ in $L^{2}$, where $k$ is the time step, under the hypothesis that that the solution of the periodic ivp belongs to the periodic Sobolev space $H^{\mu}$ for $\mu$ sufficiently  large and that $k=O(N^{-1})$, where $N$ is the order of the trigonometric polynomials used in the semidiscretization. We also proved that if $kN$ is sufficiently small the fully discrete scheme has a unique solution and satisfies an $L^{2}$ error estimate of $O(k^{\alpha}+N^{1-\mu})$, provided the local temporal error is of $O(k^{\alpha+1})$ in $L^{2}$ and if $\mu$ is sufficiently large. So, for the particular scheme with $s=3$ stages (first used in computations for solving the KdV in \cite{FrutosS1992}), the resulting error estimate is of $O(k^{4}+N^{1-\mu})$.

The Runge-Kutta scheme considered in this paper may be used to discretize in the temporal variable other conservative ivp's for pde's that model one-way propagation of nonlinear dispersive waves. For example, in \cite{DDM2019} the three-stage, fourth-order accurate scheme, coupled with a spectral discretization in space, was used for approximating the solution of the periodic ivp for the generalized Benjamin equation. This nonlinear pde, introduced in \cite{BonaC1998} as a generalization of the KdV, the Benjamin-Ono, and the Benjamin equation, is of the form
\begin{eqnarray}
u_{t}-\mathcal{L}u_{x}+f(u)_{x}=0,\label{61}
\end{eqnarray}
where $\mathcal{L}$ is the linear, nonlocal, pseudodifferential operator with Fourier symbol
\begin{eqnarray*}
\widehat{\mathcal{L}u}(\xi)=l(\xi)\widehat{u}(\xi)=(\delta |\xi|^{2m}-\gamma
|\xi|^{2r})\widehat{u}(\xi),\quad \xi\in\mathbb{R}, \label{intro2}
\end{eqnarray*}
where $m\geq 1$ is an integer, $0\leq r<m, \gamma\geq 0, \delta>0$, $\widehat{u}(\xi)$ denotes the Fourier transform of $u$ at $\xi$, and the nonlinear term $f$ is given by
\begin{eqnarray*}
f(u)=\frac{u^{q+1}}{q+1},
\end{eqnarray*}
with $q\geq 1$ integer. The Cauchy problem for (\ref{61}) has been shown to be locally well-posed in $H^{s}(\mathbb{R})$ for $s\geq 1$, see e.~g. \cite{LinaresS2005}, and globally well-posed if $q=2$ or $3$. The equation possesses solitary-wave solutions, cf. \cite{BonaC1998}; the numerical study in \cite{DDM2019} was focused on describing their generation, interactions, and stability. In the case of the periodic ivp for (\ref{61}), the standard Fourier-Galerkin semidiscrete approximation $u^{N}$ may be shown to possess an $L^{2}-$error estimate of the form $||u-u^{N}||\leq CN^{1-\mu}$ if $u\in H^{\mu}, \mu\geq 5/2$, and satisfy (\ref{dd34}) if $\mu$ is sufficiently large. If we define now, for $v\in S_{N}, F(v)=\mathcal{L}v_{x}-P_{N}f(v)_{x}$, so that $(F(v),v)=0$ for $v\in S_{N}$, it may be seen that the proof of existence of solutions and of the $L^{2}-$conservation property of the fully discrete scheme proceeds as in section \ref{sec43}. The study of the local temporal error of the scheme with $s=3$ stages may be done along the lines of Proposition \ref{propo43}. An analog of Theorem \ref{Theo51} holds as well. It may be proved that the solution of the $s-$stage, fully discrete scheme is unique and satisfies (\ref{543}) {\it mutatis mutandis}, under the assumptions that the solution of the periodic ivp is sufficiently smooth, and that $kN$ is sufficiently small if $q=1,2$ or $3$, and $kN^{\frac{q-1}{2}}$ is sufficiently small if $q\geq 4$. The general plan of the proof is that of Theorem \ref{Theo51} but, as expected, considerable technical complications enter the picture due the generalized nonlinear term.

\section*{Acknowledgement}
The authors would like to acknowledge travel support, that made possible this collaboration, from the Institute of Applied and Computational Mathematics of FORTH and the Institute of Mathematics (IMUVA) of the University of Valladolid.
\bibliographystyle{siamplain}
\bibliography{references_DD}
\end{document}